\newtheorem{thm}{Theorem}[section]
\newtheorem{lem}[thm]{Lemma}   
\newtheorem{cor}[thm]{Corollary}
\newtheorem{prop}[thm]{Proposition}
\newtheorem{rem}[thm]{Remark}
\newcommand{\sbt}{\,\begin{picture}(-1,1)(-1,-3)\circle*{3}\end{picture}\ }
\begin{document}
\title[Full cross-diffusion limit in the SKT model]
{Global structure of steady-states to \\the full cross-diffusion limit
in the Shigesada-Kawasaki-Teramoto model}
 \thanks{This research was
partially supported by JSPS KAKENHI Grant Number 19K03581.}
\author[K. Kuto]{Kousuke Kuto$^\dag$}
\thanks{$\dag$ Department of Applied Mathematics, 
Waseda University, 
3-4-1 Ohkubo, Shinjuku-ku, Tokyo 169-8555, Japan.}
\thanks{{\bf E-mail:} \texttt{kuto@waseda.jp}}
\date{\today}

\begin{abstract} 
In a previous paper \cite{Ku2}, 
the author studied the asymptotic behavior of 
coexistence steady-states to the Shigesada-Kawasaki-Teramoto
model as both cross-diffusion coefficients tend to infinity
at the same rate.
As a result, he proved that the asymptotic behavior can be
characterized by a limiting system that consists of
a semilinear elliptic equation and an integral constraint.
This paper studies the set of solutions of the limiting system.
The first main result gives sufficient conditions for
the existence/nonexistence of nonconstant solutions
to the limiting system by a topological approach using
the Leray-Schauder degree.
The second main result exhibits a bifurcation diagram
of nonconstant solutions to the one-dimensional limiting system by
analysis of a weighted time-map and a nonlocal constraint.
\end{abstract}

\subjclass[2020]{35B09, 35B32, 35B45, 35A16, 35J25, 92D25}
\keywords{cross-diffusion,
competition model,
limiting system, 
nonlinear elliptic equations,
integral constraint,
the Leray-Schauder degree,
bifurcation} \maketitle

\section{Introduction}
In this paper, we are concerned with
the following Neumann problem of
quasilinear elliptic equations:
\begin{equation}\label{SKT}
\begin{cases}
\Delta [\,(d_{1}+\alpha v)u\,]+
f(u,v)=0
\ \ &\mbox{in}\ \Omega,\\
\Delta [\,(d_{2}+\beta u)v\,]+
g(u,v)=0
\ \ &\mbox{in}\ \Omega,\\
u\ge 0,\ \ v\ge 0
\ \ &\mbox{in}\ \Omega,\\
\partial_{\nu}u=\partial_{\nu}v=0
\ \ &\mbox{on}\ \partial\Omega,
\end{cases}
\end{equation}
where
\begin{equation}\label{fgdef}
f(u,v):=u(a_{1}-b_{1}u-c_{1}v)
\quad\mbox{and}\quad
g(u,v):=v(a_{2}-b_{2}u-c_{2}v).
\end{equation}
The system \eqref{SKT} is derived from 
a diffusive Lotka-Volterra competition
model, where the unknown functions 
$u(x)$ and $v(x)$ represent 
the stationary population densities 
of the competing species in the habitat $\Omega$.
Throughout this paper,
$\Omega$ is assumed to be a bounded domain in $\mathbb{R}^{N}$ 
with a smooth boundary $\partial\Omega $ if $N\ge 2$;
an interval if $N=1$.
In \eqref{SKT},
$\Delta :=\sum^{N}_{j=1}\partial^{2}/\partial x_{j}^{2}$
denotes the Laplacian;
$\nu (x)$
denotes  the outward unit normal vector
at $x\in\partial\Omega $, and
$
\partial_{\nu }u=
\nu (x)\cdot\nabla u
$
is the out-flux of $u$.
In the reaction terms
$f(u,v)$ and $g(u,v)$,
coefficients
$a_{i}$,
$b_{i}$ and
$c_{i}$ 
$(i=1,2)$
are positive constants;
$a_{i}$ denote the birth rates of the respective species,
$b_{1}$ and $c_{2}$ denote the intra-specific competition coefficients;
$c_{1}$ and $b_{2}$ denote the inter-specific competition coefficients.
In the diffusion term pertaining to the Laplacian,
$d_{i}$
$(i=1,2)$
are positive constants;
$\alpha$ and $\beta $ are nonnegative constants,
$d_{1}\Delta u$ and $d_{2}\Delta v$ 
are linear diffusion terms describing a spatially random movement 
of each species.
whereas
$\Delta (uv)$ is a nonlinear diffusion term
describing an interaction of diffusion caused by 
the population pressure resulting from interference
between different species.
The interaction term $\Delta (uv)$ is often called
{\it cross-diffusion}
(see a book by Okubo and Levin \cite{OL} for 
modelings of the biological diffusion).
Such a Lotka-Volterra competition system with cross-diffusion 
(and some additional terms) 
was proposed by Shigesada, Kawasaki and Teramoto
\cite{SKT}.
Beyond their bio-mathematical aim to realize 
{\it segregation phenomena} of two competing species observed in 
ecosystems, a lot of pure mathematicians have studied a class of  
Lotka-Volterra systems with cross-diffusion as a prototype of 
diffusive interactions.
Such a class of Lotka-Volterra systems with cross-diffusion 
is referred to as {\it the SKT model} celebrating the authors of
the pioneering paper 
\cite{SKT}.
We refer to book chapters by J\"ungel \cite{Ju}, Ni \cite{NiW},
and Yamada \cite{Yam1, Yam2}
as surveys for mathematical works relating to the SKT model.

Despite the long history of research on the SKT model,
there are few papers on the global structure of the set of
positive nonconstant solutions
(such as $u>0$ and $v>0$ in $\Omega$)
to \eqref{SKT} in case where $\alpha$ and $\beta$
are large.
The purpose of this paper is to study the asymptotic
behavior of positive nonconstant solutions of \eqref{SKT} 
in the full cross-diffusion limit 
as $\alpha$, $\beta\to\infty$
and $\alpha/\beta\to\gamma$ with some $\gamma >0$.

In the unilateral cross-diffusion limit as
$\alpha\to\infty$ with $\beta\ge 0$ fixed,
Lou and Ni \cite{LN2} established the $L^{\infty}(\Omega )$ 
a priori bound,
which is independent of $\alpha$,
for all solutions of \eqref{SKT}
in case $N\le 3$.
Furthermore, they proved that 
the asymptotic behavior of solutions as $\alpha\to\infty$
(with fixed $\beta\ge 0$) can be
characterized by a solution to either 
a limiting system of the first kind or 
a limiting system of the second kind.
We refer to 
\cite{KW, LNY, LNY2, MSY, NWX, WWX, Wu, WX}
and \cite{Ku1, LW1}
as papers on the limiting systems of the first
and second kinds,
respectively.  

In the full cross-diffusion limit as 
$\alpha$, $\beta\to\infty$ and 
$\alpha/\beta\to\gamma >0$,
the author obtained 
the following $L^{\infty}(\Omega )$ a priori bound
for all solutions of \eqref{SKT} without any restriction on $N$:
\begin{thm}[\cite{Ku2}]\label{aprthm}
For any small $\eta >0$, 
there exists a positive constant $C=C(\eta, d_{i},a_{i}, b_{i}, c_{i})$
such that
if $\alpha>0$ and $\beta >0$
satisfy $\eta\le\alpha/\beta\le 1/\eta$,
then any solution $(u,v)$ of \eqref{SKT}
satisfies 
$$\max_{x\in\overline{\Omega}}u(x)\le C
\quad\mbox{and}\quad
\max_{x\in\overline{\Omega}}v(x)\le C.$$
\end{thm}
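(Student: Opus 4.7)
My approach is to recast \eqref{SKT} as two \emph{semilinear} Neumann problems by setting $U := (d_{1}+\alpha v)\,u$ and $V := (d_{2}+\beta u)\,v$, which satisfy $-\Delta U = f(u,v)$ and $-\Delta V = g(u,v)$ together with $\partial_{\nu}U = \partial_{\nu}V = 0$. Since $U \ge d_{1}u$ and $V \ge d_{2}v$ pointwise, an $L^{\infty}$-bound on $(U,V)$ immediately transfers to $(u,v)$, so the target reduces to controlling $\|U\|_{\infty}$ and $\|V\|_{\infty}$. As a preliminary step, integration over $\Omega$ combined with the Neumann condition yields $\int_{\Omega} f = \int_{\Omega} g = 0$, i.e.\ $a_{1}\int u = b_{1}\int u^{2} + c_{1}\int uv$ and $a_{2}\int v = c_{2}\int v^{2} + b_{2}\int uv$; Cauchy-Schwarz then gives $\alpha,\beta$-independent bounds on $\|u\|_{L^{2}}, \|v\|_{L^{2}}$, and $\|uv\|_{L^{1}}$, and hence on $\|f\|_{L^{1}}, \|g\|_{L^{1}}$.

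The decisive second step exploits the assumption $\eta \le \alpha/\beta \le 1/\eta$ via the algebraic cancellation $\alpha V - \beta U = \alpha d_{2}v - \beta d_{1}u$, in which the dominant $\alpha\beta uv$ contribution vanishes, leaving a purely linear combination of $u$ and $v$. Hence $W := \alpha V - \beta U$ satisfies the single semilinear problem $-\Delta W = \alpha g - \beta f$, and after dividing by $\beta$ both $W/\beta$ and its right-hand side $(\alpha/\beta)g - f$ are uniformly bounded in $L^{1}$; standard $L^{1}$-elliptic regularity then puts $W/\beta$ in $W^{1,q}(\Omega)$ for every $q < N/(N-1)$ uniformly in $\alpha,\beta$. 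In parallel, the Neumann maximum principle applied at an interior maximum $x_{0}$ of $U$ forces $f(u(x_{0}),v(x_{0})) \ge 0$, which yields the pointwise restrictions $u(x_{0}) \le a_{1}/b_{1}$ and $v(x_{0}) \le a_{1}/c_{1}$, with the symmetric statement at the maximum of $V$.

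The final step is to bootstrap to the desired $L^{\infty}$-bound through a Moser-type iteration on $-\Delta U = f(u,v)$ and $-\Delta V = g(u,v)$, using $u \le U/d_{1}$ and $v \le V/d_{2}$ to dominate the quadratic reaction terms polynomially by $U$ and $V$. The main technical obstacle is that neither $U$ nor $V$ is a priori bounded in any $L^{p}$ uniformly, since the cross term $\alpha uv$ can inject mass of order $\alpha$ into $\int U$; the iteration therefore lacks a direct seed estimate. The key device to circumvent this is the cancelled quantity $W$: combining its uniform Sobolev estimate from the second step with the pointwise restrictions coming from the maximum principle at the extrema of $U$ and $V$ should separate $W$ back into independent controls on $U$ and $V$, yielding the claimed $\alpha,\beta$-independent $L^{\infty}$-bound. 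A compactness/blow-up argument would be my fallback: assuming a sequence of solutions with $\|u_{n}\|_{\infty} + \|v_{n}\|_{\infty} \to \infty$, rescale about a concentration point and use the semilinear structure together with the cancellation in $W$ to rule out any nontrivial limit profile.
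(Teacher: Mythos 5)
You have correctly spotted the one structural fact that makes the bilateral limit tractable, namely the cancellation $\alpha V-\beta U=\alpha d_{2}v-\beta d_{1}u$, and your maximum-principle observation at the maximum of $U$ (giving $u(x_{0})\le a_{1}/b_{1}$ and $v(x_{0})\le a_{1}/c_{1}$) is also right. But the proof as written does not close, for two concrete reasons. First, the opening reduction is a dead end: you propose to bound $\|U\|_{\infty}$ and $\|V\|_{\infty}$ uniformly in $\alpha,\beta$, but $U=(d_{1}+\alpha v)u\sim\alpha uv$ is genuinely of order $\alpha$ for perfectly harmless solutions (e.g.\ near the constant coexistence state), so no iteration scheme can ever produce an $\alpha$-independent bound on $U$ itself; the target must be $u$ and $v$ directly. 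Second, the $L^{1}$-elliptic estimate $W/\beta\in W^{1,q}$, $q<N/(N-1)$, is far too weak to reach $L^{\infty}$ in any dimension $N\ge 2$ (and the theorem is claimed for all $N$), while the Moser iteration you fall back on has, as you yourself note, no seed estimate. The phrase ``should separate $W$ back into independent controls'' is exactly the step that is missing.

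What closes the argument is to stay entirely within the maximum principle. (i) Evaluate $U$ at its maximum point $x_{0}$: since $U(x)\le U(x_{0})=(d_{1}+\alpha v(x_{0}))u(x_{0})\le (d_{1}+\alpha a_{1}/c_{1})a_{1}/b_{1}$, dividing by $\alpha$ gives the \emph{pointwise} product bound $u(x)v(x)\le M$ uniformly for $\alpha$ bounded away from $0$ (the case of small $\alpha,\beta$ being elementary). You never extract this, although you have all the ingredients. (ii) Apply the maximum principle not to $U$ and $V$ but to $\theta:=(\alpha/\beta)d_{2}v-d_{1}u$, which by your cancellation satisfies $\Delta\theta+(\alpha/\beta)g-f=0$. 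At the minimum of $\theta$ one has $(\alpha/\beta)g\le f$ there; combined with $uv\le M$ this forces $u$ to be bounded at that point, hence $\theta\ge -C$ everywhere, and symmetrically at the maximum one gets $\theta\le C$; this is precisely where the hypothesis $\eta\le\alpha/\beta\le 1/\eta$ is consumed. (iii) Finally $|d_{1}u-(\alpha/\beta)d_{2}v|\le C$ together with $uv\le M$ gives $d_{1}u^{2}\le (\alpha/\beta)d_{2}uv+Cu\le d_{2}M/\eta+Cu$, hence $\|u\|_{\infty}\le C'$, and likewise for $v$. So the cancellation you found is indeed the key, but it must be exploited through the maximum principle and the product bound $uv\le M$, not through $L^{1}$-regularity or iteration; without steps (i) and (ii) the proposal has a genuine gap.
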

By a combination of Theorem \ref{aprthm} and
the elliptic regularity theory, the author obtained
the following limiting systems which can
characterize the asymptotic behavior of solutions of 
\eqref{SKT} in the full cross-diffusion limit.
This result gives a rigorous justification of 
a formal observation by Kan-on \cite{Ka}
on the existence of the limiting systems.
\begin{thm}[\cite{Ku2}]\label{limthm}
Suppose that
$a_{1}/a_{2}\neq b_{1}/b_{2}$
and
$a_{1}/a_{2}\neq c_{1}/c_{2}$.
Let $\{(u_{n},v_{n})\}$ be any sequence of 
positive nonconstant solutions of \eqref{SKT} with
$\alpha=\alpha_{n}\to\infty$,
$\beta=\beta_{n}\to\infty$
and
$\gamma_{n}:=\alpha_{n}/\beta_{n}\to\gamma >0$
as $n\to\infty$.
Then either of the following two situations occurs,
passing to a subsequence if necessary;
\begin{enumerate}[{\rm (i)}]
\item
there exist a positive function $u\in C^{2}(\overline{\Omega })$ and
a positive number $\tau $ 
such that
$$\lim_{n\to\infty}(u_{n},v_{n})=\biggl(u,\dfrac{\tau}{u}\biggr)
\quad\mbox{in}\ C^{1}(\overline{\Omega})\times C^{1}(\overline{\Omega }),$$
and
\begin{equation}\label{deltadef}
w:=\delta u-\dfrac{\gamma \tau}{u}
\quad\mbox{with}\quad \delta:=\dfrac{d_{1}}{d_{2}}
\quad\mbox{and}\quad d:=d_{2}
\end{equation}
satisfies
a limiting system which consists of 
the semilinear elliptic equation
\begin{subequations}\label{IS}
\begin{align}\label{IS-1} 
d\Delta w &+f
\biggl(u,\dfrac{\tau}{u}\biggr)-\gamma g
\biggl(u,\dfrac{\tau}{u}\biggr)=0
\quad\mbox{in}\ \Omega,
\end{align}
subject the homogeneous Neumann boundary condition
\begin{equation}\label{IS-2}
\partial_{\nu}w=0\quad\mbox{on}\ \partial\Omega
\end{equation}
and the integral constraint
\begin{equation}\label{IS-3}
\displaystyle\int_{\Omega }
f\biggl(u,\dfrac{\tau}{u}\biggr)=0;
\end{equation}
\end{subequations}
\item
there exist nonnegative functions
$u$,
$v\in C(\overline{\Omega})$ such that
$uv=0$
in $\Omega$,
$$
\lim_{n\to\infty}(u_{n},v_{n})=(u,v)
\quad\mbox{uniformly in}\ \overline{\Omega}$$
and $w_{n}:=\delta u_{n}-\gamma \tau u_{n}^{-1}$ satisfies
$
\lim_{n\to\infty}w_{n}=w$
in
$C^{1}(\overline{\Omega })$
with some sign-changing function $w$
satisfying
\begin{equation}\label{CS}
\begin{cases}
d\Delta w+f\biggl(\dfrac{w_{+}}{\delta},\dfrac{w_{-}}{\gamma }\biggr)
-\gamma g\biggl(\dfrac{w_{+}}{\delta},\dfrac{w_{-}}{\gamma }\biggr)=0
\quad&\mbox{in}\ \Omega,\\
\partial_{\nu}w=0
\quad&\mbox{on}\ \partial\Omega,\\
\displaystyle\int_{\Omega }f\biggl(\dfrac{w_{+}}{\delta},
\dfrac{w_{-}}{\gamma }\biggr)=0
\end{cases}
\end{equation}
and
$$
(u,v)=\biggl(\dfrac{w_{+}}{\delta},
\dfrac{w_{-}}{\gamma }\biggr),
$$
where
$w_{+}:=\max\{w,0\}$ and
$w_{-}:=-\min\{w,0\}\ge 0$.
\end{enumerate}
\end{thm}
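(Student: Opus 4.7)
My approach hinges on treating
$$U_n:=(d_1+\alpha_n v_n)u_n,\qquad V_n:=(d_2+\beta_n u_n)v_n$$
as the natural semilinear unknowns. The system \eqref{SKT} reads $\Delta U_n+f(u_n,v_n)=0$ and $\Delta V_n+g(u_n,v_n)=0$ with $\partial_\nu U_n=\partial_\nu V_n=0$, and by Theorem \ref{aprthm} the right-hand sides are uniformly bounded in $L^\infty(\Omega)$. Elliptic $L^p$ theory then yields uniform $W^{2,p}(\Omega)$ bounds for every $p<\infty$, hence uniform $C^{1,\theta}$ bounds. Passing to a subsequence, both $\alpha_n^{-1}U_n=(d_1/\alpha_n)u_n+u_nv_n$ and $\beta_n^{-1}V_n=(d_2/\beta_n)v_n+u_nv_n$ converge in $C^1(\overline\Omega)$ to a common limit $\phi:=\lim u_nv_n$. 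Since $\Delta(\alpha_n^{-1}U_n)=-\alpha_n^{-1}f(u_n,v_n)\to 0$ in $L^\infty$ and $\partial_\nu\phi=0$, $\phi$ is harmonic with Neumann data, hence a constant which I denote $\tau\ge 0$.

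Using $\gamma_n\beta_n=\alpha_n$ one checks the cancellation $d w_n=U_n-\gamma_n V_n$ for $w_n:=\delta u_n-\gamma_n v_n$, so
$$d\,\Delta w_n+f(u_n,v_n)-\gamma_n g(u_n,v_n)=0\ \text{in }\Omega,\qquad \partial_\nu w_n=0\ \text{on }\partial\Omega.$$
The source is again uniformly $L^\infty$-bounded, and a further subsequence yields $w_n\to w$ in $C^1(\overline\Omega)$. Given the two pointwise relations $u_nv_n\to\tau$ and $\delta u_n-\gamma_n v_n\to w$, $v_n$ solves $\gamma_n v_n^2+w_n v_n-\delta u_n v_n=0$, so the explicit formula
$$v_n=\frac{-w_n+\sqrt{w_n^2+4\gamma_n\delta\,u_nv_n}}{2\gamma_n}\ \longrightarrow\ \frac{-w+\sqrt{w^2+4\gamma\delta\tau}}{2\gamma}=:v$$
converges uniformly, and $u=(w+\gamma v)/\delta$ follows. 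Thus $(u_n,v_n)\to(u,v)$ uniformly with $uv=\tau$ and $\delta u-\gamma v=w$.

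If $\tau>0$ then $u,v>0$ throughout $\Omega$ and $v=\tau/u$; substituting into the limit of the $w_n$-equation yields \eqref{IS-1}, \eqref{IS-2} is inherited, and the integral constraint \eqref{IS-3} comes from integrating $\Delta U_n+f(u_n,v_n)=0$ and passing to the limit. Bootstrapping in the resulting $w$-equation promotes $u$ to $C^2(\overline\Omega)$. If $\tau=0$ the formulas collapse to $u=w_+/\delta$, $v=w_-/\gamma$, and the analogous passage to the limit gives \eqref{CS}. The main obstacle I anticipate is the assertion in (ii) that $w$ must be \emph{sign-changing}: were $w\ge 0$, then $v\equiv 0$ and the limit equation reduces to the scalar logistic Neumann problem $d_1\Delta u+u(a_1-b_1 u)=0$, whose only nonnegative solutions are the constants $0$ and $a_1/b_1$, so $w$ would be constant. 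Excluding such a degenerate limit for nonconstant $(u_n,v_n)$ should leverage the non-degeneracy hypothesis $a_1/a_2\notin\{b_1/b_2,c_1/c_2\}$ (which ensures a unique positive coexistence state) together with \emph{both} integral identities $\int_\Omega f(u_n,v_n)=\int_\Omega g(u_n,v_n)=0$, presumably via a refined expansion of $\beta_n u_n v_n$ to higher order; this exclusion, rather than the passage to the limit itself, is where I expect to spend the most effort.
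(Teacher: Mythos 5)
First, a point of comparison: the present paper does not prove Theorem \ref{limthm} at all --- it is imported from \cite{Ku2} --- so there is no in-paper argument to measure you against. Your outline follows the expected Lou--Ni-type route and its core is correct: the auxiliary unknowns $U_n=(d_1+\alpha_nv_n)u_n$, $V_n=(d_2+\beta_nu_n)v_n$ with $\Delta U_n=-f$, $\Delta V_n=-g$; the uniform $W^{2,p}$ bounds from Theorem \ref{aprthm}; the harmonicity (hence constancy) of $\lim u_nv_n=\tau$; the exact cancellation $d\,w_n=U_n-\gamma_nV_n$ for $w_n=\delta u_n-\gamma_nv_n$; and the quadratic formula recovering $(u_n,v_n)$ from $(w_n,u_nv_n)$, which cleanly produces the dichotomy $\tau>0$ versus $\tau=0$ and the limit problems \eqref{IS} and \eqref{CS}. (For (i) the theorem asserts convergence in $C^1$, not merely uniform convergence; this does follow from your ingredients, since $w_n\to w$ and $u_nv_n\to\tau$ in $C^{1}(\overline{\Omega})$ and the algebraic map $(w,\tau)\mapsto(u,v)$ is smooth when $\tau>0$, but you should state it.)

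The genuine gap is the step you flag and leave open: showing that when $\tau=0$ the limit $w$ must change sign, i.e., excluding $(u_n,v_n)\to(0,0)$, $(a_1/b_1,0)$ and $(0,a_2/c_2)$. Your guess that this requires a ``refined expansion of $\beta_nu_nv_n$ to higher order'' overcomplicates it; the exclusion is elementary and uses exactly the hypotheses $A\neq B$, $A\neq C$ together with the two identities $\int_\Omega f(u_n,v_n)=\int_\Omega g(u_n,v_n)=0$, valid for every $n$ by integrating \eqref{SKT} over $\Omega$. If $(u_n,v_n)\to(0,0)$ uniformly, then $a_1-b_1u_n-c_1v_n\to a_1>0$ uniformly while $u_n>0$, so $\int_\Omega f(u_n,v_n)>0$ for large $n$, a contradiction. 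If $(u_n,v_n)\to(a_1/b_1,0)$, then $a_2-b_2u_n-c_2v_n$ converges uniformly to $a_2-b_2a_1/b_1\neq 0$ (this is where $A\neq B$ enters), hence has a fixed sign for large $n$, while $v_n>0$, contradicting $\int_\Omega g(u_n,v_n)=0$; the case $(0,a_2/c_2)$ is symmetric using $A\neq C$. This is precisely the argument carried out around \eqref{unif}--\eqref{intg2} in the proof of Lemma \ref{bddlem} of the present paper, so the gap is real but closes in a few lines once you use the sign of the integrands rather than a higher-order expansion.
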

By Theorem \ref{limthm},
we know that a segregation of competing species can take place
in the sense that $u_{n}(x)v_{n}(x)$ converges to a nonnegative
constant $\tau $ in the full cross-diffusion limit as
$\alpha_{n}$,
$\beta_{n}\to\infty$
and 
$\gamma_{n}=\alpha_{n}/\beta_{n}\to\gamma >0$.
The second situation (ii) can be interpreted as
the {\it complete segregation} in which
territories of two competing species 
completely segregate each other because $\tau=0$,
while the first situation (i) can be interpreted as
the {\it incomplete segregation}
in which
territories of two competing species 
do not completely segregate because $\tau>0$.
In \cite{Ku2}, the author showed local bifurcation 
curves of nonconstant solutions of 
the limiting system \eqref{IS} of the incomplete segregation.
On the other hand, it was shown in \cite{Ku2} that
the complete segregation (ii) cannot occur in the 
one-dimensional case.

This paper first show that the complete segregation
(ii) cannot occur also in the higher dimensional case.
Then it becomes important to derive information on 
the set of nonconstant solutions of the limiting system 
\eqref{IS}
in order to know the segregation mechanism of
two competing species when
cross-diffusion coefficients
$\alpha$ and $\beta$ are sufficiently large. 
This paper will give some sufficient conditions
for the existence/nonexistence of nonconstant
solutions of the limiting system \eqref{IS}.
In what follows, we usually regard $(w,\tau )$
as a pair of unknowns with positive parameter
$d$.
It should be noted that
varying $d$ 
corresponds to varying $d_{1}$ and $d_{2}$ keeping 
$d_{1}/d_{2}=\delta$
(see \eqref{deltadef}),
and moreover,
$(u,\tau)$ also can be 
regarded as a pair of unknowns by 
the relation in \eqref{deltadef};
$$
w=\delta u-\dfrac{\gamma\tau}{u},
\quad\mbox{that is},\quad
u=\dfrac{\sqrt{w^{2}+4\gamma\delta\tau}+w}{2\delta}.
$$
Our strategy of the proof is as follows:
First we use the Poincar\'e inequality and the maximum principle
to show that 
the Neumann problem
\eqref{IS-1}-\eqref{IS-2} does not admit any
nonconstant solution 
if $d>0$ or $\tau>0$ is sufficiently large.
Then, in the weak competition case 
$c_{1}/c_{2}<a_{1}/a_{2}<b_{1}/b_{2}$
or the strong competition case 
$b_{1}/b_{2}<a_{1}/a_{2}<c_{1}/c_{2}$,
if $d>0$ is large enough, then \eqref{IS-1}-\eqref{IS-2}
admits a unique positive solution $(u,\tau )=(u^{*}, u^{*}v^{*})$,
such as $u>0$ in $\Omega$ and $\tau>0$,
where $(u^{*},v^{*})$ 
corresponds to the positive constant solution
\begin{equation}\label{const}
(u^{*},v^{*}):=
\dfrac{1}{b_{1}c_{2}-b_{2}c_{1}}
(a_{1}c_{2}-a_{2}c_{1}, b_{1}a_{2}-b_{2}a_{1})
\end{equation}
of \eqref{SKT}.
The index of an associated operator around
$(w^{*}, \tau^{*}):=
(\delta u^{*}-\gamma v^{*}, u^{*}v^{*})$
will be calculated, and especially 
for the case where $D(a_{i}, b_{i}, c_{i}, \gamma )>0$
(see \eqref{Ddef} for 
the definition of $D(a_{i}, b_{i}, c_{i}, \gamma )$),
the homotopy invariance property of the Leray-Schauder degree
will enables us to get sufficient intervals of $d$
for the existence of nonconstant solutions of \eqref{IS}.

In particular, 
for the one-dimensional case,
the global bifurcation structure of nonconstant
solutions of \eqref{IS} will be shown
(see Figure 1).
The procedure of the proof is as follows: 
First, the bifurcation structure of solutions to the Neumann problem
\eqref{IS-1}-\eqref{IS-2} is obtained through the analysis of the time-map 
of the related initial value problem. We note that the time map contains 
a weighted singular integral and differs from the usual one. 
Next, we construct a bifurcation branch of the set of solutions to 
the limit system \eqref{IS} by selecting functions in the set of 
solutions to \eqref{IS-1}-\eqref{IS-2} that satisfy 
the integral constraint \eqref{IS-3}. 
For the selection, 
we adopt a topological method combining 
the singular perturbation of solutions of \eqref{IS-1}-\eqref{IS-2}
and the global bifurcation theory.
As a result,
in the weak or strong competition case with
$D(a_{i}, b_{i}, c_{i}, \gamma )>0$,
for each $j\in\mathbb{N}$,
we shall construct a branch of solutions of \eqref{IS}
in which derivatives $u'$ change the sign exactly $j-1$ times 
bifurcating from a pitchfork bifurcation point on 
the branch of the constant solution $\{(d, u^{*}, \tau^{*})\,:\,d>0\}$
at some $d=d^{(j)}>0$,
and moreover, reaching a singular limit 
$(d, \tau)=(0, \tau_{0})$ with some $\tau_{0}\ge 0$.
Furthermore, it will be shown that
$\tau_{0}>0$ in the weak competition case, while
$\tau_{0}=0$ in the strong competition case.

The contents of this paper is as follows:
In Section 2, the nonexistence of nontrivial solution
of \eqref{CS} will be proved.
In Section 3, main results on the set of 
nonconstant solution of the limiting system \eqref{IS}
will be presented.
In Section 4, we derive some a priori estimates of 
solutions to \eqref{IS}.
In Section 5, we show sufficient conditions on 
the existence of nonconstant solutions of 
\eqref{IS} in the multi-dimensional case.
In Section 6, we construct the global bifurcation
curves of nonconstant solutions of \eqref{IS}
in the one-dimensional case.

Throughout this paper, 
the usual norms of the functional spaces $L^{p}(\Omega )$
for $p\in [\,1,\infty )$ and $L^{\infty }(\Omega )$
are denoted by
$$
\|u\|_{p}:=
\left(\displaystyle\int_{\Omega }|u(x)|^{p}\right)^{1/p}
\quad\mbox{and}\quad
\|u \|_{\infty }:=\mbox{ess.}\sup_{x\in\overline{\Omega }}|u(x)|.
$$
Hence $\|u\|_{\infty}=\max_{x\in\overline{\Omega}}|u(x)|$
in a case when $u\in C(\overline{\Omega })$.
Furthermore, we denote by $\{\varPhi_{j}\}^{\infty}_{j=0}$
a complete orthonormal base in $L^{2}(\Omega )$
consisting of eigenfunctions of 
$-\Delta $ with the homogeneous Neumann boundary condition
on $\partial\Omega$, namely,
\begin{equation}\label{Lap}
\begin{cases}
-\Delta\varPhi_{j}=\lambda_{j}\varPhi_{j}\quad &\mbox{in}\ \Omega,
\\
\|\varPhi_{j}\|_{2}=1,\\
\partial_{\nu}\varPhi_{j}=0
\quad &\mbox{on}\ \partial\Omega,
\end{cases}
\end{equation}
where 
$$
0=\lambda_{0}<\lambda_{1}\le\lambda_{2}\le\cdots\le
\lambda_{j}\le\lambda_{j+1}\le\cdots
$$
represent all eigenvalues counting multiplicity.

\section{Nonexistence of nonconstant solutions for
the complete segregation}
Throughout this paper,
the following maximum principle for elliptic equations
will play an important role in the proofs.

\begin{lem}[e.g., \cite{LN2}]\label{MPlem}
Suppose that $h\in C(\overline{\Omega}\times\mathbb{R})$.
Then the following properties {\rm (i)} and {\rm (ii)} hold true$:$
\begin{enumerate}[{\rm (i)}]
\item
If $\underline{U}\in C^{2}(\Omega )
\cap C^{1}(\overline{\Omega })$ satisfies
$$
\Delta \underline{U}+
h(x,\underline{U})\ge 0
\quad\mbox{in}\ \Omega,\quad
\partial_{\nu}\underline{U}\le 0
\quad\mbox{on}\ \partial\Omega,
$$
and $\underline{U}(x_{0})=\max_{x\in\overline{\Omega}}U(x)$,
then $h(x_{0},\underline{u}(x_{0}))\ge 0$. \vspace{1mm}
\item
If $\overline{U}\in C^{2}(\Omega )
\cap C^{1}(\overline{\Omega })$ satisfies
$$
\Delta \overline{U}+
h(x,\overline{U})\le 0
\quad\mbox{in}\ \Omega,\quad
\partial_{\nu}\overline{U}\ge 0
\quad\mbox{on}\ \partial\Omega,
$$
and $\overline{U}(x_{0})=\min_{x\in\overline{\Omega}}\overline{U}(x)$,
then $h(x_{0},\overline{U}(x_{0}))\le 0$.
\end{enumerate}
\end{lem}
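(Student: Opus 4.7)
The plan is to argue by contradiction, first establishing (i) in detail; (ii) then follows by applying (i) to $-\overline{U}$ with the modified nonlinearity $\tilde h(x,s):=-h(x,-s)$, which reverses all the relevant differential and boundary inequalities.

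For (i), suppose toward contradiction that $h(x_0,\underline{U}(x_0))<0$. By joint continuity of $h$ and of $\underline{U}$ at $x_0$, I can find a relatively open neighborhood $V\subset\overline{\Omega}$ of $x_0$ and some $\varepsilon>0$ such that $h(x,\underline{U}(x))<-\varepsilon$ for every $x\in V$. Substituting into the hypothesized inequality gives $\Delta \underline{U}(x)\ge -h(x,\underline{U}(x))>\varepsilon$ on $V\cap\Omega$. If $x_0\in\Omega$, I shrink $V$ so that $\overline{V}\subset\Omega$; since $x_0$ is then an interior maximum of $\underline{U}$, the elementary calculus criterion $\Delta\underline{U}(x_0)\le 0$ holds, which directly contradicts $\Delta\underline{U}(x_0)>\varepsilon$.

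The remaining case is $x_0\in\partial\Omega$. On $V\cap\Omega$, $\underline{U}$ satisfies the strict subharmonic inequality $\Delta\underline{U}>\varepsilon>0$, so by the strong maximum principle it cannot be constant on any component of $V\cap\Omega$ and cannot attain an interior maximum there. Hence, after shrinking $V$ if necessary, $\underline{U}(x_0)>\underline{U}(x)$ for all $x\in\overline{V\cap\Omega}\setminus\{x_0\}$. Because $\partial\Omega$ is smooth, the interior ball condition holds at $x_0$, so I can select an interior ball $B\subset V\cap\Omega$ tangent to $\partial\Omega$ at $x_0$. Applying Hopf's boundary point lemma to $\underline{U}$ on $B$ (using only the linear inequality $\Delta\underline{U}\ge 0$ there) yields $\partial_\nu\underline{U}(x_0)>0$, contradicting the assumed Neumann sign condition $\partial_\nu\underline{U}(x_0)\le 0$.

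The expected obstacle is the boundary case: the interior portion is a one-line second-derivative test, but the boundary portion genuinely requires the strong maximum principle to upgrade the weak maximum to a strict inequality nearby, followed by Hopf's lemma, which in turn relies on the smoothness of $\partial\Omega$ to guarantee the interior ball condition. Everything else is bookkeeping that propagates the strict sign of $h(\cdot,\underline{U}(\cdot))$ through the differential inequality and the symmetry $\underline{U}\leftrightarrow -\overline{U}$ used to deduce (ii) from (i).
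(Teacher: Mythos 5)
The paper does not actually prove this lemma---it is quoted from \cite{LN2} without proof---but your argument is precisely the standard one behind the cited result: the second-derivative test at an interior maximum, and the strong maximum principle followed by Hopf's boundary point lemma (using the interior ball condition from the smoothness of $\partial\Omega$, together with $\underline{U}\in C^{1}(\overline{\Omega})$ so that $\partial_{\nu}\underline{U}(x_{0})$ exists) at a boundary maximum, with (ii) obtained from (i) via $\underline{U}=-\overline{U}$ and $\tilde h(x,s)=-h(x,-s)$. One cosmetic slip: no shrinking of $V$ can guarantee $\underline{U}(x_{0})>\underline{U}(x)$ for \emph{boundary} points $x\in\partial\Omega\cap\overline{V}$ (other maximum points of $\underline{U}$ on $\partial\Omega$ may accumulate at $x_{0}$), but this is harmless because Hopf's lemma only needs the strict inequality at the interior points of the tangent ball $B$, where it does follow from your second-derivative-test argument.
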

In this section, we show that
the second situation (ii) of Theorem \ref{limthm}
never occurs 
in the full cross-diffusion limit.
\begin{prop}\label{CSprop}
All solutions of \eqref{CS} consist of 
three constant solutions$:$
$w=\delta a_{1}/b_{1};$
$w=0;$
$w=-\gamma a_{2}/c_{2}$.
\end{prop}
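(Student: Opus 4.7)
The plan is to combine the integral constraint in \eqref{CS} with the maximum principle (Lemma~\ref{MPlem}) to force $w_+$ and $w_-$ each to take at most two values, and then to invoke the connectedness of $\Omega$.

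First I would extract two scalar integral identities. Because $f(0,v)\equiv 0$, the integrand $f(w_+/\delta,w_-/\gamma)$ is supported in $\{w>0\}$, so the constraint in \eqref{CS} reduces to
$$
\int_\Omega w_+\!\left(a_1-\frac{b_1}{\delta}w_+\right)dx=0. \qquad (\mathrm{A})
$$
Integrating the PDE in \eqref{CS} over $\Omega$ and killing $\int_\Omega\Delta w$ via the Neumann condition gives $\int_\Omega f=\gamma\int_\Omega g$; combined with (A) and the fact that $g(u,0)\equiv 0$, this simplifies to
$$
\int_\Omega w_-\!\left(a_2-\frac{c_2}{\gamma}w_-\right)dx=0. \qquad (\mathrm{B})
$$

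Next I would extract a sharp upper bound on the positive part of $w$. Writing the PDE as $\Delta w+h(x,w)=0$, Lemma~\ref{MPlem}(i) applied at a positive maximum of $w$ (where $w_-=0$) yields $\max_\Omega w\le\delta a_1/b_1$. Setting $M_+:=\max_\Omega w_+$ and assuming $M_+>0$, the pointwise bound $w_+^2\le M_+ w_+$ inserted into (A) gives $a_1\int_\Omega w_+\le (b_1 M_+/\delta)\int_\Omega w_+$, hence $M_+\ge\delta a_1/b_1$. Therefore $M_+=\delta a_1/b_1$, equality holds almost everywhere in $w_+^2\le M_+ w_+$, and the continuous nonnegative function $w_+(M_+-w_+)$ vanishes identically, so $w_+(x)\in\{0,M_+\}$ for every $x\in\overline\Omega$. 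The two level sets $\{w_+=0\}$ and $\{w_+=M_+\}$ are closed and partition $\overline\Omega$, so connectedness forces one of them to be empty: either $w\equiv\delta a_1/b_1$ or $w_+\equiv 0$.

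The symmetric argument based on (B) and Lemma~\ref{MPlem}(ii) at a negative minimum yields either $w\equiv-\gamma a_2/c_2$ or $w_-\equiv 0$. Of the four resulting combinations, $w\equiv\delta a_1/b_1$ and $w\equiv-\gamma a_2/c_2$ are mutually exclusive, leaving exactly the three constants $\delta a_1/b_1$, $0$, $-\gamma a_2/c_2$ asserted in the proposition. I expect the main obstacle to be recognising that the quadratic identities (A) and (B), paired with the sharp maximum-principle upper bounds, enforce a Cauchy--Schwarz-type rigidity squeezing $w_\pm$ into indicator-like functions; this is what allows the proof to sidestep any finer Hopf-lemma or free-boundary analysis along $\partial\{w>0\}$ that would otherwise seem to be required in the higher-dimensional setting.
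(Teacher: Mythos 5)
Your proposal is correct and follows essentially the same route as the paper: it rests on the same two integral identities (the paper's \eqref{CS-3} and \eqref{CS-4}, obtained from the constraint, the relation $w_{+}w_{-}=0$, and integration of the equation) together with the same maximum-principle bounds $w_{+}\le\delta a_{1}/b_{1}$ and $w_{-}\le\gamma a_{2}/c_{2}$ from Lemma \ref{MPlem}. The only difference is the endgame: the paper argues by contradiction that the resulting nonnegative integrand cannot integrate to zero unless $w$ is one of the constants, whereas you extract the same rigidity via the equality case $M_{+}=\delta a_{1}/b_{1}$ and a connectedness argument --- equivalent in substance.
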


\begin{proof}
Let $w$ be any weak solution of \eqref{CS}.
Then the elliptic regularity theory (e.g., \cite{GT})
ensures that
$w\in W^{2,p}(\Omega )$ for any $p\in (1,\infty)$.
Furthermore, the Sobolev embedding theorem
leads to $w\in C^{1,\theta}(\overline{\Omega})$
for any $\theta\in (0,1)$.

Suppose for contradiction that 
there exists a nonconstant solution $w$ of \eqref{CS}.
Substituting \eqref{fgdef} into \eqref{CS}, 
one can see that $w$ satisfies
\begin{equation}\label{CS-2}
\begin{cases}
d\Delta w +\dfrac{w_{+}}{\delta}
\biggl(a_{1}-\dfrac{b_{1}w_{+}}{\delta }\biggr)
-w_{-}\biggl(
a_{2}-\dfrac{c_{2}w_{-}}{\gamma }\biggr)=0
\quad&\mbox{in}\ \Omega,\\
\partial_{\nu}w=0\quad&\mbox{on}\ \partial\Omega
\end{cases}
\end{equation}
and the integral constraint
\begin{equation}\label{CS-3}
\displaystyle\int_{\Omega}
w_{+}\biggl(a_{1}-\dfrac{b_{1}w_{+}}{\delta}\biggr)=0
\end{equation}
because $w_{+}w_{-}=0$ in $\Omega$.
Suppose that $\Omega_{+}:=\{\,x\in\Omega\,:\,w(x)>0\,\}$ is not empty.
Let $x^{*}\in\Omega_{+}$ be a maximum point of $w$, namely,
$w(x^{*})=\max_{x\in\overline{\Omega}}w(x)$.
We observe that $w$ is of class $C^{2}$ in $\Omega_{+}$ and
$w=0$ on $\Omega\cap\partial\Omega_{+}$.
Then applying (i) of Lemma \ref{MPlem} to \eqref{CS-3}, one can see
$$0\le\dfrac{w_{+}(x^{*})}{\delta}
\biggl(a_{1}-\dfrac{b_{1}w_{+}(x^{*})}{\delta }\biggr)
-w_{-}(x^{*})\biggl(
a_{2}-\dfrac{c_{2}w_{-}(x^{*})}{\gamma }\biggr)
=\dfrac{w_{+}(x^{*})}{\delta}
\biggl(a_{1}-\dfrac{b_{1}w_{+}(x^{*})}{\delta }\biggr).
$$
Then we know that
$0<w_{+}(x^{*})\le\delta a_{1}/b_{1}$,
and thereby,
$0<w_{+}(x)\le \delta a_{1}/b_{1}$ for any $x\in\Omega_{+}$.
Hence it follows that
$$
\dfrac{w_{+}(x)}{\delta}\biggl(
a_{1}-\dfrac{b_{1}w_{+}(x)}{\delta }\biggr)\ge 0
\quad\mbox{for any}\ x\in\overline{\Omega}_{+}.
$$
Since $w$ is not identically equal to
zero or $\delta a_{1}/b_{1}$ by our assumption, 
we see that
$$
\displaystyle\int_{\Omega}
w_{+}\biggl(a_{1}-\dfrac{b_{1}w_{+}}{\delta}\biggr)=
\displaystyle\int_{\Omega_{+}}
w_{+}\biggl(a_{1}-\dfrac{b_{1}w_{+}}{\delta}\biggr)>0.
$$
However, this contradicts \eqref{CS-3}.
Now we can say that
if $w$ is a nonconstant solution of \eqref{CS},
then $\Omega_{+}$ is empty.

Suppose that 
$\Omega_{-}:=\{\,x\in\Omega\,|\,w(x)<0\,\}$ is not empty.
By the boundary condition and \eqref{CS-3},
we integrate the elliptic equation of \eqref{CS-2} over $\Omega$
to obtain
\begin{equation}\label{CS-4}
\displaystyle\int_{\Omega}w_{-}\biggl(
a_{2}-\dfrac{c_{2}w_{-}}{\gamma }\biggr)=0.
\end{equation}
Let $x_{*}\in\Omega_{-}$ be a minimum point of $w$, namely,
$w(x_{*})=\min_{x\in\overline{\Omega}}w(x)$.
By a similar application of (ii) of Lemma \ref{MPlem} to
\eqref{CS-3}, we see that
$$
w_{-}(x_{*})\biggl(
a_{2}-\dfrac{c_{2}w_{-}(x_{*})}{\gamma }\biggr)\ge0,
$$
and then,
$0<w_{-}(x)\le \gamma a_{2}/c_{2}$
for any 
$x\in\Omega_{-}$.
Since $w$ does not identically equal to zero or $-\gamma a_{2}/c_{2}$ by our
assumption, then 
$$
\displaystyle\int_{\Omega}w_{-}\biggl(
a_{2}-\dfrac{c_{2}w_{-}}{\gamma }\biggr)
=
\displaystyle\int_{\Omega_{-}}w_{-}\biggl(
a_{2}-\dfrac{c_{2}w_{-}}{\gamma }\biggr)
>0.
$$
This contradicts \eqref{CS-4}.
Consequently, 
the proof by contradiction enables us to conclude that
\eqref{CS} possesses no nonconstant solution.
We complete the proof of Proposition \ref{CSprop}.
\end{proof}
Proposition \ref{CSprop} implies that
all solutions of \eqref{CS}
are corresponding to 
constant solutions of \eqref{SKT} such that $uv=0$; 
$(u,v)=(0,0)$,
$(a_{1}/b_{1}, 0)$
and
$(0, a_{2}/c_{2})$.
We recall that the situation (ii) of Theorem \ref{limthm}
asserts that $w$ is a sign-changing solution of \eqref{CS}.
Then, together with Proposition \ref{CSprop}, 
we obtain the following result:
\begin{cor}
The situation {\rm (ii)} of Theorem \ref{limthm} cannot occur.
\end{cor}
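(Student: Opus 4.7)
The plan is to observe that the corollary is essentially an immediate syntactic consequence of Proposition \ref{CSprop} combined with the precise phrasing of situation (ii) of Theorem \ref{limthm}. There is no genuine analytic work left to do: all of the PDE, maximum principle, and integral constraint work has been absorbed into Proposition \ref{CSprop}. The task is simply to line up the conclusions of that proposition with the hypotheses imposed on $w$ in (ii).

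More concretely, I would argue by contradiction. Suppose situation (ii) of Theorem \ref{limthm} occurs for some subsequence. Then by the statement of (ii), there exists a \emph{sign-changing} function $w \in C^{1}(\overline{\Omega})$ which is a (classical, hence weak) solution of the system \eqref{CS}. Proposition \ref{CSprop} classifies every solution of \eqref{CS}: the only solutions are the three constants $w \equiv \delta a_{1}/b_{1}$, $w \equiv 0$, and $w \equiv -\gamma a_{2}/c_{2}$. None of these is sign-changing—each is either identically zero or of one fixed sign on $\Omega$—so the two conditions are incompatible, giving the desired contradiction.

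Since the entire argument is a one-line deduction once Proposition \ref{CSprop} is in hand, there is no real obstacle; the only thing to be careful about is making sure the notion of ``sign-changing'' used in the statement of Theorem \ref{limthm}(ii) genuinely excludes all three listed constants (which it does, since a sign-changing function must, by definition, attain both strictly positive and strictly negative values). I would therefore present the corollary as a brief observation immediately after Proposition \ref{CSprop}, without any additional lemmas, and conclude by remarking that all limits of subsequences of positive nonconstant solutions $(u_{n},v_{n})$ of \eqref{SKT} in the full cross-diffusion regime must therefore fall under case (i), i.e., the incomplete segregation regime studied in the remainder of the paper.
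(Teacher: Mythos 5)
Your proposal is correct and follows exactly the paper's reasoning: the paper likewise notes that situation (ii) requires $w$ to be a sign-changing solution of \eqref{CS}, while Proposition \ref{CSprop} shows every solution of \eqref{CS} is one of the three constants $\delta a_{1}/b_{1}$, $0$, $-\gamma a_{2}/c_{2}$, none of which changes sign. No further comment is needed.
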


\section{Main results}
In this section, we state main results,
which are concerned with the set of nonconstant solutions
of the limiting system \eqref{IS}.
For a simple expression of \eqref{IS}, we set
\begin{equation}\label{uvdef}
u=u(w,\tau ):=\dfrac{\sqrt{w^{2}+4\gamma\delta\tau}+w}{2\delta},\quad
v=v(w,\tau ):=\dfrac{\tau }{u}=
\dfrac{\sqrt{w^{2}+4\gamma\delta\tau}-w}{2\gamma}.
\end{equation}
It is noted that 
\eqref{uvdef} gives an injection
from $(w,\tau)\in \mathbb{R}\times\mathbb{R}_{+}$
to
$(u,v)\in\mathbb{R}_{+}\times\mathbb{R}_{+}$,
where $\mathbb{R}_{+}:=(0,\infty)$, 
and the inverse of $(u(w,\tau ), v(w,\tau ))$ is given by
$$
w(u,v) =\delta u-\gamma v,
\quad
\tau (u,v)=uv.
$$

Then the limiting system \eqref{IS} is expressed as
\begin{subequations}\label{ISS}
\begin{equation}\label{ISS-1} 
d\Delta w+f(u,v)-
\gamma g(u,v)=0
\quad \mbox{in}\ \Omega,\quad \tau >0,
\end{equation}
subject to the homogeneous Neumann boundary condition
\begin{equation}\label{ISS-2}
\partial_{\nu}w=0\quad\mbox{on}\ \partial\Omega
\end{equation}
with the integral constraint
\begin{equation}\label{ISS-3}
\displaystyle\int_{\Omega }
f(u,v)=0.
\end{equation}
\end{subequations}
Obviously, any solution $(w,\tau)$ of \eqref{ISS} also satisfies
\begin{equation}\label{intg}
\int_{\Omega}g(u,v)=0,
\end{equation}
which comes from the integration of \eqref{ISS-1} over $\Omega$
using \eqref{ISS-2} and \eqref{ISS-3}.

Here we note the set of constant solutions of \eqref{ISS}.
Hereafter
$(A,B,C)$ will be denoted by
$$
A:=\dfrac{a_{1}}{a_{2}},\quad
B:=\dfrac{b_{1}}{b_{2}},\quad
C:=\dfrac{c_{1}}{c_{2}}.
$$
In the weak competition case 
$C<A<B$
or the strong competition case 
$B<A<C$,
\eqref{SKT} admits a unique positive constant solution
$(u^{*}, v^{*})$
represented as \eqref{const}.
Hence \eqref{uvdef} induces that
the limiting system \eqref{ISS} 
admits a unique constant solution
satisfying $\tau >0$;
\begin{equation}\label{wsdef}
(w^{*}, \tau^{*})=(\delta u^{*}-\gamma v^{*}, u^{*}v^{*})
\end{equation}
if $C<A<B$ or $B<A<C$.
Our interest is the set of nonconstant solutions of \eqref{ISS}.
Here we call $(w,\tau )$  a nonconstant solution
of \eqref{ISS} when $w\in C^{2}(\overline{\Omega})$ and $\tau$
satisfy \eqref{ISS} and $w(x)$ is a nonconstant function.

The first result is concerned with 
a priori estimate of all solutions
of \eqref{ISS}.
In addition,
the result asserts that
a large region of $d$ wipes out
any nonconstant solution.
\begin{thm}\label{nonexthm}
There exists a positive constant
$C^{*}=C^{*}(a_{i}, b_{i}, c_{i}, \gamma, \delta)$ such that
any solution $(w,\tau)$ of \eqref{ISS} satisfies
$$
\|w\|_{\infty}\le C^{*}\quad\mbox{and}\quad
\tau\le\min\biggl\{\,\dfrac{a_{1}^{\,2}}{4b_{1}c_{1}},
\dfrac{a_{2}^{\,2}}{4b_{2}c_{2}}\,\biggr\}\,(\,=:\overline{\tau}\,).
$$
Furthermore,
there exists $\overline{d}=\overline{d}(a_{i}, b_{i}, c_{i}, \gamma, \delta)>0$ 
such that \eqref{ISS} does not admit any nonconstant solution
if $d>\overline{d}$.
\end{thm}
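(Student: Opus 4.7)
The plan is to exploit that, once $\tau$ is fixed, $u$ and $v$ are determined algebraically by $w$ through \eqref{uvdef}, so the limiting system effectively collapses to a scalar semilinear equation for $w$. I would establish the three conclusions in the order in which their constants feed into one another: the bound on $\tau$ first, then the $L^{\infty}$ bound on $w$, and finally the nonexistence for large $d$. For the bound on $\tau$, observe that since $\tau$ is a scalar, $v=\tau/u$ pointwise in $\Omega$, so \eqref{ISS-3} becomes
$$
c_{1}\tau|\Omega|=\int_{\Omega}u(a_{1}-b_{1}u),
$$
and the pointwise bound $u(a_{1}-b_{1}u)\le a_{1}^{2}/(4b_{1})$ yields $\tau\le a_{1}^{2}/(4b_{1}c_{1})$. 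Integrating \eqref{ISS-1} over $\Omega$ together with \eqref{ISS-3} produces the symmetric identity \eqref{intg}, and the same reduction with $u=\tau/v$ gives $\tau\le a_{2}^{2}/(4b_{2}c_{2})$.

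For the $L^{\infty}$ bound on $w$, the key observation is that, for any fixed $\tau\ge 0$, \eqref{uvdef} makes $w\mapsto u(w,\tau)$ strictly increasing and $w\mapsto v(w,\tau)$ strictly decreasing. Hence a maximum point $x^{*}$ of $w$ is simultaneously a maximum point of $u$, while a minimum point $x_{*}$ of $w$ is a maximum point of $v$. Applying Lemma \ref{MPlem}(i) at $x^{*}$ to \eqref{ISS-1} gives $f(u,v)-\gamma g(u,v)\ge 0$ there; substituting $v=\tau/u$ and multiplying by $u(x^{*})^{2}$ yields a quartic inequality
$$
-b_{1}u(x^{*})^{4}+a_{1}u(x^{*})^{3}+(\gamma b_{2}-c_{1})\tau u(x^{*})^{2}-\gamma a_{2}\tau u(x^{*})+\gamma c_{2}\tau^{2}\ge 0.
$$
With $\tau\le\overline{\tau}$ from the first step, the leading term $-b_{1}u^{4}$ forces $u(x^{*})\le C_{1}$ for a constant depending only on the parameters. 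An analogous application of Lemma \ref{MPlem}(ii) at $x_{*}$, followed by the substitution $u=\tau/v$, yields $v(x_{*})\le C_{2}$. Combining with $\max w\le\delta u(x^{*})$ and $\min w\ge -\gamma v(x_{*})$ gives $\|w\|_{\infty}\le C^{*}$.

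For the nonexistence assertion I would run a standard $L^{2}$ energy argument. Let $\overline{w}:=|\Omega|^{-1}\int_{\Omega}w$. Testing \eqref{ISS-1} against $w-\overline{w}$ and integrating by parts via \eqref{ISS-2} gives
$$
d\int_{\Omega}|\nabla w|^{2}=\int_{\Omega}F(w)(w-\overline{w}),\qquad F(w):=f(u,v)-\gamma g(u,v),
$$
where $u=u(w,\tau)$ and $v=v(w,\tau)$. A direct differentiation of \eqref{uvdef} shows $\partial_{w}u\in[0,1/\delta]$ and $\partial_{w}v\in[-1/\gamma,0]$ uniformly in $\tau\ge 0$, so the polynomial $F$ is Lipschitz on $[-C^{*},C^{*}]$ with a constant $L=L(C^{*},\overline{\tau},a_{i},b_{i},c_{i},\gamma,\delta)$. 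Replacing $F(w)$ by $F(w)-F(\overline{w})$ (legal because $\int(w-\overline{w})=0$) and using the Poincar\'e inequality $\int(w-\overline{w})^{2}\le\lambda_{1}^{-1}\int|\nabla w|^{2}$ produces
$$
d\int_{\Omega}|\nabla w|^{2}\le\frac{L}{\lambda_{1}}\int_{\Omega}|\nabla w|^{2},
$$
so $\overline{d}:=L/\lambda_{1}$ forces $\nabla w\equiv 0$ whenever $d>\overline{d}$. I expect the main obstacle to lie in the $L^{\infty}$ step: the scalar maximum principle of Lemma \ref{MPlem} cannot be directly applied to the vector pair $(u,v)$, and it is only the algebraic constraint $uv=\tau$ combined with the monotonicity of $w\mapsto u$ and $w\mapsto v$ that allows each extremum of $w$ to be converted into a one-variable polynomial inequality, which then closes using the already-established bound on $\tau$.
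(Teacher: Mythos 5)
Your proposal is correct. The bound on $\tau$ and the $L^{\infty}$ bound on $w$ follow essentially the same route as the paper: the integral constraint \eqref{ISS-3} together with its counterpart \eqref{intg} gives $\tau\le\overline{\tau}$, and Lemma \ref{MPlem} applied at the extrema of $w$ (which, by the monotonicity of $w\mapsto u$ and $w\mapsto v$, are extrema of $u$ and $v$ respectively) gives the sup-norm bound. Where the paper converts the inequality $h(u(x^{*}),\tau)\ge 0$ into a bound by locating the least and greatest zeros $z_{1}(\tau)$, $\overline{z}(\tau)$ of $h(\cdot,\tau)$ through a case analysis on the sign of $\gamma b_{2}-c_{1}$, you simply clear denominators and read the bound off the negative leading coefficient of the resulting quartic; this is a more direct execution of the same idea, and it closes precisely because $\tau\le\overline{\tau}$ is already in hand. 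The nonexistence step is where you genuinely diverge: the paper tests the equation against $u-\overline{u}$ and $v-\overline{v}$ separately, subtracts, and exploits the sign $\int_{\Omega}\nabla u\cdot\nabla v=-\tau\int_{\Omega}|\nabla u/u|^{2}<0$ to absorb the cross term, whereas you test directly against $w-\overline{w}$ and use that the scalar nonlinearity $F(w)=f(u(w,\tau),v(w,\tau))-\gamma g(u(w,\tau),v(w,\tau))$ is Lipschitz in $w$ uniformly in $\tau\in(0,\overline{\tau}\,]$, via $\partial_{w}u\in[0,1/\delta]$, $\partial_{w}v\in[-1/\gamma,0]$ and the already-established $L^{\infty}$ bound. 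Your scalar version is simpler --- it avoids the cross-term computation entirely and never needs to decouple $(u,v)$ --- at the cost of an $\overline{d}$ expressed through a Lipschitz constant of the composite nonlinearity rather than through the explicit constant $M$ of \eqref{Mdef}; both arguments hinge on the Poincar\'e--Wirtinger inequality and yield the same conclusion.
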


The next result gives sufficient intervals of $d$ for
the existence of nonconstant solutions of \eqref{ISS}
in the weak or strong competition case with a couple of additional conditions.
To express an essential condition for the existence of
nonconstant solutions, we introduce the following function:
\begin{equation}\label{Ddef}
D(a_{i}, b_{i}, c_{i}, \gamma):=
\gamma b_{2}(A-B)(B+C-2A)+c_{2}(C-A)\{\,
A(B+C)-2BC\,\}.
\end{equation}

\begin{thm}\label{exthm}
Assume the weak competition $C<A<B$
or the strong competition $B<A<C$.
Suppose further that
\begin{equation}\label{bifcond}
D(a_{i}, b_{i}, c_{i}, \gamma)>0
\quad \mbox{and}\quad
\dfrac{u^{*}}{v^{*}}\neq\dfrac{\gamma}{\delta}.
\end{equation}
Then there exists a sequence
$\{d^{(j)}\}$ with 
$$0\leftarrow\cdots\le d^{(j+1)}\le d^{(j)}\le\cdots
\le d^{(2)}\le d^{(1)}\le\overline{d}$$
such that
\eqref{ISS} admits at least one nonconstant solution
in the following case {\rm (i)} or {\rm (ii)}:
\begin{enumerate}[{\rm (i)}]
\item
$C<A<B$ and 
$d\in (d^{(j+1)}, d^{(j)})$ 
and $j$ is odd;
\item
$B<A<C$ and
$d\in (d^{(j+1)}, d^{(j)})$ 
and $j$ is even.
\end{enumerate}
\end{thm}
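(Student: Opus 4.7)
The plan is to combine the a priori bound from Theorem \ref{nonexthm} with a Leray-Schauder degree computation, following the strategy announced in the introduction. First I would rewrite \eqref{ISS} as a fixed-point equation $(w,\tau) = T_d(w,\tau)$ for a compact operator on the Banach space $X := C(\overline{\Omega}) \times \mathbb{R}$, using the relation \eqref{uvdef} to express $(u,v)$ in terms of $(w,\tau)$, inverting $-\Delta + I$ with homogeneous Neumann boundary conditions to solve the PDE component \eqref{ISS-1} for $w$, and appending the scalar equation \eqref{ISS-3} as the component determining $\tau$. By Theorem \ref{nonexthm}, every solution of \eqref{ISS} lies in a bounded open set $\mathcal{O} \subset X$ that can be chosen independently of $d>0$, so the degree $\deg(I - T_d, \mathcal{O}, 0)$ is well defined for every $d$ and, by homotopy invariance, is constant in $d$.

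Second, I would exploit the two extremes of this homotopy. For $d > \overline{d}$, Theorem \ref{nonexthm} forces the only solution in $\mathcal{O}$ to be the constant $(w^{*}, \tau^{*})$, so the degree equals the local index $\mathrm{ind}(I - T_d, (w^{*}, \tau^{*}))$. To compute this index, I would linearize at $(w^{*}, \tau^{*})$ and diagonalize along the orthonormal base $\{\varPhi_j\}_{j \ge 0}$ from \eqref{Lap}. For $j \ge 1$ the perturbation $\psi_j$ of the $j$-th Fourier coefficient of $w$ decouples from the scalar perturbation $\sigma$ of $\tau$ (because $\int_\Omega \varPhi_j = 0$), giving a scalar factor whose sign is governed by the sign of $-d\lambda_j + F_w(w^{*},\tau^{*})$, where $F := f - \gamma g$. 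For $j=0$ the constraint \eqref{ISS-3} and the PDE projected on $\varPhi_0$ form a $2 \times 2$ block coupling $\psi_0$ and $\sigma$. Ordering the positive zeros of these factors decreasingly defines the sequence $\{d^{(j)}\}$.

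Third, under \eqref{bifcond} I would show that each $d^{(j)}$ is a simple positive zero across which the corresponding scalar factor changes sign, so the index flips by $-1$. The quantity $D(a_{i}, b_{i}, c_{i}, \gamma)$ in \eqref{Ddef} emerges as a sign factor arising when the partial derivatives of $f-\gamma g$ at $(u^{*},v^{*})$ are expressed in the original parameters: $D > 0$ is precisely the condition for the critical values to be real and positive, and $u^{*}/v^{*} \neq \gamma/\delta$ rules out a degeneracy that would otherwise annihilate the coupling in the $\varPhi_0$-$\sigma$ block. Fixing the sign of the $2\times 2$ block and of the scalar factors for $d > \overline{d}$ yields the overall sign of the index there; crucially, this reference sign differs between the weak regime $C<A<B$ and the strong regime $B<A<C$. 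Consequently, the index at $(w^{*},\tau^{*})$ agrees with the degree for $d > \overline{d}$ when and only when an even number of $d^{(j)}$'s have been crossed, and the parity split in the conclusion (odd $j$ for weak competition, even $j$ for strong competition) describes exactly the intervals $(d^{(j+1)}, d^{(j)})$ on which index $\neq$ degree. On such intervals the set of solutions in $\mathcal{O}$ cannot reduce to $(w^{*}, \tau^{*})$ alone, so by excision a nonconstant solution must exist.

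The main obstacle I expect is the index computation itself, specifically the contribution of the nonlocal integral constraint \eqref{ISS-3}. Unlike for a purely local Neumann problem, the index is not simply $(-1)^{\#\{j \ge 1 : d\lambda_j < F_w\}}$: the constraint couples $\varPhi_0$ and the scalar $\tau$, introducing an extra sign factor that must be identified with $D(a_{i}, b_{i}, c_{i}, \gamma)$ up to a positive multiple. Correctly tracking this sign, verifying that $d \mapsto \det L_d$ crosses zero transversally at each $d^{(j)}$, and confirming that the condition $u^{*}/v^{*}\neq \gamma/\delta$ is exactly what prevents two consecutive zeros from coalescing, constitute the delicate algebraic core of the argument.
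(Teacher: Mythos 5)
Your overall strategy coincides with the paper's: recast \eqref{ISS} as a fixed-point problem for a compact map on $C^{1}(\overline{\Omega})\times\mathbb{R}$, use the a priori bounds of Theorem \ref{nonexthm} to get homotopy invariance of the Leray--Schauder degree in $d$, identify the degree for $d>\overline{d}$ with the index of the unique constant solution $(w^{*},\tau^{*})$, compute that index by a Fourier decomposition in which the modes $j\ge 1$ decouple and the constant mode couples with the $\tau$-perturbation in a $2\times 2$ block, and conclude existence of a nonconstant solution on every interval $(d^{(j+1)},d^{(j)})$ where the index disagrees with the degree. This is exactly the content of Lemmas \ref{bddlem} and \ref{indexlem} and of the proof of Theorem \ref{exthm}.

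There is, however, one genuine gap. You assert that all solutions of \eqref{ISS} lie in a bounded open set $\mathcal{O}$, chosen independently of $d$, on which the degree is automatically well defined and constant. But the fixed-point map has additional fixed points with $\tau=0$ (the trivial and semitrivial states $(w,\tau)=(0,0)$, $(\delta a_{1}/b_{1},0)$, $(-\gamma a_{2}/c_{2},0)$), so $\mathcal{O}$ must be taken of annular type, excluding a neighbourhood of $\{\tau=0\}$ and of $\{w\equiv 0\}$, and one must then prove that no solution of \eqref{ISS} approaches $\partial\mathcal{O}$ as $d$ ranges over $[\varepsilon,\infty)$, i.e.\ that $\tau$ and $\|w\|_{C^{1}(\overline{\Omega})}$ stay uniformly away from zero. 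This is the content of Lemma \ref{bddlem}: a compactness and contradiction argument that invokes Proposition \ref{CSprop} (nonexistence of nonconstant solutions of the complete-segregation system \eqref{CS}) to rule out $\tau_{n}\to 0$, and it is precisely here --- not in the $2\times 2$ block of the linearization, as you suggest --- that the hypothesis $u^{*}/v^{*}\neq\gamma/\delta$ enters: it guarantees $w^{*}\neq 0$, so the constant solution lies inside the annular region, and it excludes the degenerate limit $\|w_{n}\|_{C^{1}}\to 0$ with $\tau_{n}\to\tau_{0}>0$, which would force a constant limit $(u_{\infty},v_{\infty})=(u^{*},v^{*})$ satisfying $\delta u_{\infty}=\gamma v_{\infty}$. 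Without this step the homotopy invariance on which your argument rests is unjustified, and the degree for large $d$ could pick up contributions from the trivial and semitrivial fixed points. Two smaller points: the degree is constant only for $d\ge\varepsilon$ with $\varepsilon>0$ arbitrary (the $C^{1}$ bound of Corollary \ref{Mcor} degenerates as $d\searrow 0$), which suffices; and $D(a_{i},b_{i},c_{i},\gamma)>0$ is the condition for the critical values $d^{(j)}$ in \eqref{bif} to be positive (they are always real), which is consistent with, though slightly differently phrased than, your description.
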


\psfrag{0}[t]{{$0$}}
\psfrag{u}[t]{{{\footnotesize $u$}}}
\psfrag{t}[t]{{$\tau$}}
\psfrag{1}[t]{{$d^{(1)}$}}
\psfrag{2}[t]{{$d^{(2)}$}}
\psfrag{3}[t]{{$d^{(3)}$}}
\psfrag{c}[t]{{$\tau^{*}$}}
\psfrag{d}[t]{{$d_{2}$}}
\psfrag{p}[t]{{$\varGamma_{1}^{\pm}$}}
\psfrag{q}[t]{{$\varGamma_{2}^{\pm}$}}
\psfrag{r}[t]{{$\varGamma_{3}^{\pm}$}}
\psfrag{s}[t]{{$\tau_{0}$}}

\begin{figure}
\centering
\subfigure[$C<A<B$]{
\includegraphics*[scale=0.48]{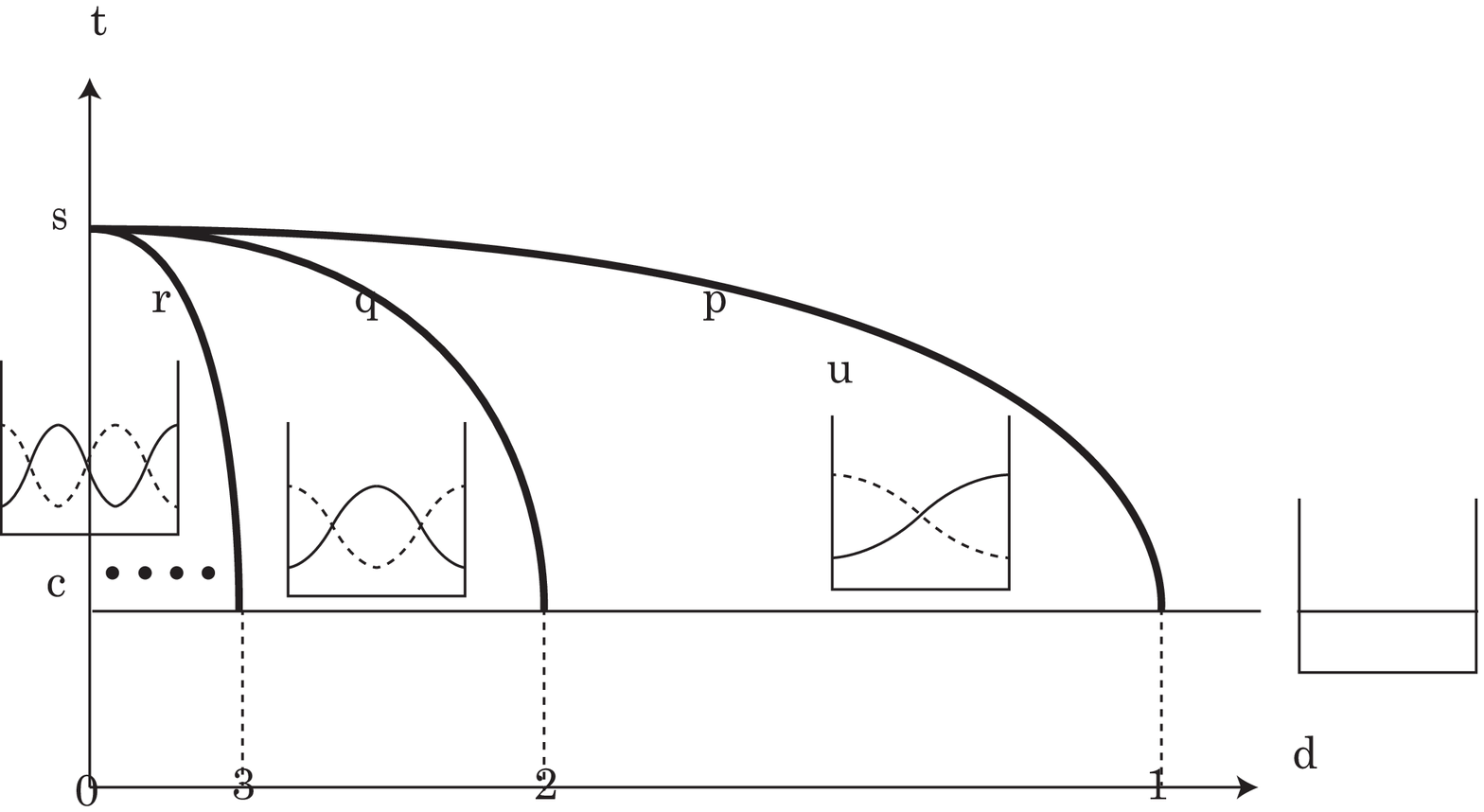}
\label{figa}}
\subfigure[$B<A<C$]{
\includegraphics*[scale=0.48]{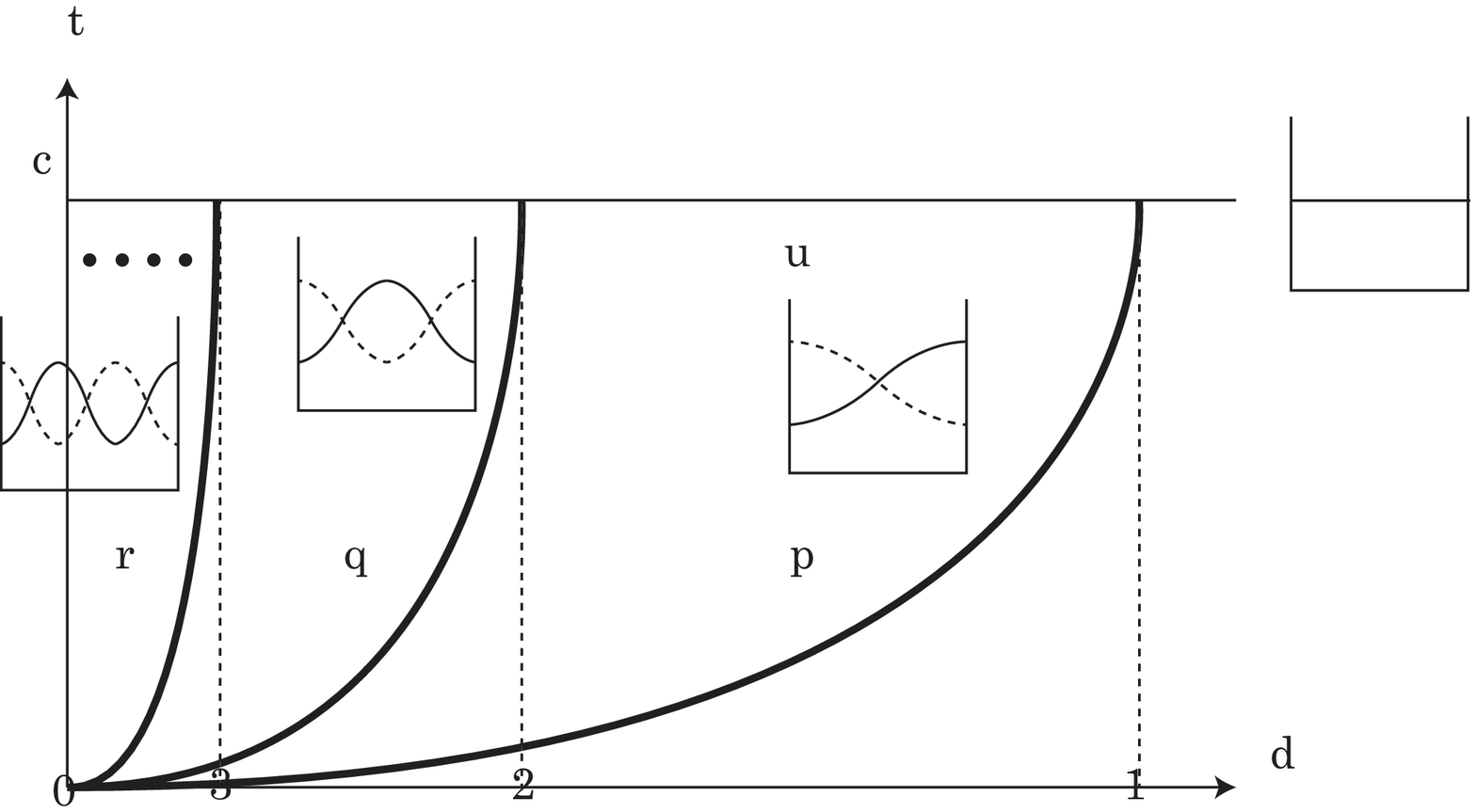}
\label{figb}
}

\caption{
Possible bifurcation diagram}
\label{fig1}
\end{figure}

In particular, for the one-dimensional case $\Omega =(0,1)$,
we show  more detailed information on the set of 
nonconstant solutions of \eqref{ISS}.
To state the global bifurcation structure of 
nonconstant solutions of \eqref{ISS} with $\Omega =(0,1)$,
we set 
$$
\mathcal{S}^{+}_{j}
:=\left\{\,(d, u, \tau)\in\mathbb{R}_{+}\times X\,:\,
\begin{array}{l}
(d,u, \tau)\
\mbox{satisfies\ \eqref{ISS}\ with\ $\Omega =(0,1)$\ and}\\
(-1)^{i-1}u'(x)>0\ \mbox{for}\ x\in
((i-1)/j, i/j)\ (i=1,2,\ldots,j)
\end{array}
\,
\right\}
$$
and
$$
\mathcal{S}^{-}_{j}
:=\left\{\,(d,u, \tau)\in\mathbb{R}_{+}\times X\,:\,
\begin{array}{l}
(d,u, \tau)\
\mbox{satisfies\ \eqref{ISS}\ with\ $\Omega =(0,1)$\ and}\\
(-1)^{i-1}u'(x)<0\ \mbox{for}\ x\in
((i-1)/j, i/j)\ (i=1,2,\ldots,j)
\end{array}
\,
\right\}
$$
for each $j\in\mathbb{N}$,
where $X:=C^{1}(\overline{\Omega})\times\mathbb{R}$.
Our aim is to construct
the global branch of nonconstant solutions,
contained in $\mathcal{S}^{\pm}_{j}$, 
that bifurcates from
the positive constant solution
$(u^{*},\tau^{*})$ at $d=d^{(j)}$ 
and reaches
a singular limit as $d\searrow 0$
in the weak or strong competition case
with $D(a_{i}, b_{i}, c_{i}, \gamma )>0$.
See also Figure 1.
\begin{thm}\label{1dimthm}
Assume the weak competition $C<A<B$ or 
the strong competition $B<A<C$.
Assume further that $D(a_{i}, b_{i}, c_{i}, \gamma )>0$.
Then for each $j\in\mathbb{N}$,
there exists a pair of connected sets
$\varGamma^{+}_{j}\,(\,\subset\mathcal{S}^{+}_{j}\,)$ 
and $\varGamma^{-}_{j}\,(\,\subset\mathcal{S}^{-}_{j}\,)$
with the following properties:
\begin{enumerate}[{\rm (i)}]
\item
$\varGamma^{+}_{j}$ bifurcates from the positive constant solution
branch $\{\,(d, u^{*},\tau^{*})\,:\,d>0\,\}$ at $d=d^{(j)}$;
\item
$\varGamma^{+}_{j}$ reaches a singular limit 
$(d, \tau)=(0, \tau_{0})$ with some $\tau_{0}\ge 0$.
Furthermore,
if $C<A<B$, then $\tau_{0}>0$,
whereas if $B<A<C$, then $\tau_{0}=0$;
\item
$\varGamma_{j}^{-}=\{\,(d, u(\,\sbt\,+1/j), \tau)\,:\,
(d, u, \tau)\in\varGamma^{+}_{j}\,\}$,
where $u(\,\sbt\,+1/j)$ is regarded as a periodic extension.
\end{enumerate}
\end{thm}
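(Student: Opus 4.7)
The plan is to combine a symmetry reduction to monotonic single-arch solutions, a weighted time-map analysis for the resulting ODE, and a Rabinowitz-type global bifurcation argument anchored at $d = d^{(j)}$ and carried through to the singular limit $d \searrow 0$. Assertion (iii) follows at once from the $x$-autonomy and reflection symmetry of the equation in \eqref{ISS}: any $u \in \mathcal{S}_j^+$ extends symmetrically and $2/j$-periodically, and the shift $x \mapsto x + 1/j$ swaps the monotonicity pattern, bijecting $\mathcal{S}_j^+$ onto $\mathcal{S}_j^-$. I therefore focus on $\varGamma_j^+$, and rescaling $y = jx$ reduces the problem to the construction of monotonically increasing solutions on $[0,1]$ with the diffusion $d$ replaced by $d/j^2$, so that all $j$ are handled uniformly.

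For the single-arch problem I parameterize nonconstant monotonic solutions by $(p, q, \tau)$ with $p = u(0) < q = u(1)$ and $\tau > 0$. Multiplying the ODE
$$d \bigl[(\delta + \gamma\tau/u^2) u'\bigr]' + h(u, \tau) = 0, \qquad h(u,\tau) := f(u, \tau/u) - \gamma g(u, \tau/u),$$
by $w' = (\delta + \gamma\tau/u^2) u'$ and integrating produces the first integral
$$\tfrac{d}{2}\bigl(\delta + \gamma\tau/u^2\bigr)^2 (u')^2 = -\int_p^u h(s,\tau)\bigl(\delta + \gamma\tau/s^2\bigr)\, ds,$$
whose positivity on $(p,q)$ together with the endpoint conditions $u'(0) = u'(1) = 0$ converts \eqref{ISS-1}--\eqref{ISS-2} into the weighted singular time-map equation $T(p,q,\tau,d) = 1$ with an $L^1$-singularity of the integrand at $p$ and $q$. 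The integral constraint \eqref{ISS-3} becomes a second nonlocal equation $I(p,q,\tau,d) = 0$ involving a companion weighted integral. Nonconstant monotonic solutions correspond precisely to zeros of $(T-1, I)$, and the smoothness of this pair at regular points follows by standard arguments for time-maps.

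Local bifurcation at $d = d^{(j)}$ is established by linearizing \eqref{ISS} at the constant solution $(w^*, \tau^*)$: the assumption $D(a_i,b_i,c_i,\gamma) > 0$ together with $u^*/v^* \neq \gamma/\delta$ in \eqref{bifcond} guarantees simplicity of the corresponding eigenvalue and a nonzero transversality derivative in $d$, so Crandall--Rabinowitz yields a pitchfork branch $\varGamma_j^+$ emanating at $d^{(j)}$ with leading Neumann eigenmode $\cos(j\pi x)$ and hence contained in $\mathcal{S}_j^+$ near the bifurcation point. Global continuation then follows from Rabinowitz's alternative applied to the nonlocal problem: the continuum cannot reconnect to the constant-solution branch at another bifurcation point (the nodal count is preserved along $\varGamma_j^+$), cannot escape via $\|u\|_\infty$ or $\tau$ by the a priori bounds of Theorem \ref{nonexthm}, and cannot persist to $d \ge \overline{d}$ by the nonexistence statement of the same theorem. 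Hence $\varGamma_j^+$ must extend down to $d = 0$.

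The technical heart of the argument, and the main obstacle, is analyzing this singular limit and determining $\tau_0$. As $d \searrow 0$ along $\varGamma_j^+$, the first integral forces $u$ to develop transition layers and approach a piecewise-constant profile jumping between the two nontrivial zeros of $h(\cdot, \tau)$, with outer values selected by an equal-area condition intrinsic to the weighted time-map. Compatibility of this layer structure with $\int_0^1 f(u,\tau/u) = 0$ then pins down $\tau_0$. A case analysis of $h(\cdot, \tau)$ shows that in the weak-competition regime $C < A < B$ only $\tau_0 > 0$ is consistent, corresponding to a jump between two positive values of $u$, whereas in the strong-competition regime $B < A < C$ the equal-area condition forces $\tau_0 = 0$, so that the limit becomes a complete-segregation profile (permissible for the limiting system even though Proposition \ref{CSprop} forbids it as a limit of \eqref{SKT} itself). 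Keeping $\varGamma_j^+$ connected through this degeneration requires a careful matched-asymptotic control of the weighted time-map integrals, which is where the proof is technically heaviest.
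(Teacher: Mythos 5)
Your overall architecture (local Crandall--Rabinowitz bifurcation at $d^{(j)}$, preservation of the nodal class, Rabinowitz global continuation blocked from above by Theorem \ref{nonexthm}, weighted time-map and transition-layer analysis as $d\searrow 0$, and the shift $x\mapsto x+1/j$ for (iii)) is the same as the paper's. But the step that actually decides the dichotomy $\tau_{0}>0$ versus $\tau_{0}=0$ — which you yourself call the technical heart — is not carried out, and the mechanism you gesture at does not work in the weak-competition case. You claim that compatibility of the layered limit with $\int_{0}^{1}f(u,\tau/u)=0$ "pins down" $\tau_{0}$ and that for $C<A<B$ "only $\tau_{0}>0$ is consistent." In fact, for $C<A<B$ and small $\tau>0$, Lemma \ref{fsflem}(ii) shows $f(z_{i}(\tau),\tau/z_{i}(\tau))>0$ for all three zeros, so a layered profile jumping between $z_{1}(\tau_{0})$ and $z_{3}(\tau_{0})$ with \emph{small} $\tau_{0}>0$ is just as incompatible with the constraint as $\tau_{0}=0$ would be; the constraint alone cannot select $\tau_{0}>0$. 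The paper's actual argument is topological: along the connected branch $\tau$ must stay in $\mathcal{T}$ (the set of $\tau$ for which $h(\cdot,\tau)$ has three zeros — otherwise no nonconstant Neumann solution exists, Remark \ref{taubarrem}), and in the weak-competition case with $D>0$ the set $\mathcal{T}$ is \emph{disconnected} with $\tau^{*}\notin(0,\widetilde{\tau})$ (Lemma \ref{taulem}(ii), Remark \ref{rem69}); connectedness of $\varGamma_{j}^{+}$ then confines $\tau$ to the component containing $\tau^{*}$, which is bounded away from $0$, and this is what forces $\tau_{0}>0$. None of this appears in your proposal. Conversely, in the strong case the sign information $f(z_{1})<0$, $f(z_{3})<0$ must be propagated from small $\tau$ up to the whole interval $(0,\widetilde{\tau})$ containing $\tau^{*}$ (the continuity argument in the proof of Lemma \ref{taulem}(i)) before the balance $\ell f(z_{1})+(1-\ell)f(z_{3})=0$ can be refuted.

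A second, smaller gap: your invocation of the Rabinowitz alternative concludes that the branch "must extend down to $d=0$," but the a priori bounds of Theorem \ref{nonexthm} only bound $\tau$ from above, so the alternative also permits the continuum to terminate at $\tau=0$ with $d>0$ (where the solution concept degenerates). The paper must exclude this separately: if $\tau_{n}\to 0$ with $d_{n}\to d_{0}>0$, the limit $w_{0}$ solves \eqref{CS}, Proposition \ref{CSprop} forces it to be one of the three constants, and the rescaling argument below \eqref{below} in Lemma \ref{bddlem} rules out each of these using the constraints $\int f=\int g=0$ and $A\neq B$, $A\neq C$. Without this step your strong-competition branch could in principle stop at some $(d_{0},0)$ with $d_{0}>0$ rather than reach the asserted singular limit $(0,0)$.
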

Here we should refer to a recent numerical result
by Breden, Kuehn and Soresina \cite{BKS}, which
numerically exhibits the bifurcation diagram 
of solutions of \eqref{SKT}
with $$(\alpha, \beta, a_{1}, a_{2}, b_{1}, b_{2}, c_{1}, c_{2})=
\biggl(100, 100, \dfrac{15}{2}, \dfrac{16}{7}, 4,1,6,2\biggr)$$
regarding $d=d_{1}=d_{2}$ as a bifurcation parameter.
It is easy to check that the above setting belongs to
the weak competition and satisfies $D(a_{i},b_{i}, c_{i}, 1)=17/64>0$.
Comparing the bifurcation diagram \cite[Figure 11]{BKS}
with Theorem \ref{1dimthm}, 
it can be seen that the set of solutions of the limit system \eqref{ISS}
gives a good approximation of that of \eqref{SKT} 
when both cross-diffusion coefficients $\alpha$ and $\beta$
are sufficiently large.

\section{A priori estimate}
This section is devote to the proof of Theorem \ref{nonexthm}.
We begin with 
a priori estimate for any solution
of \eqref{ISS}:
\begin{lem}\label{aprlem}
There exists a positive constant
$C^{*}=C^{*}(a_{i}, b_{i}, c_{i}, \gamma, \delta)$ such that
any solution $(w,\tau)$ of \eqref{ISS} satisfies
$\|w\|_{\infty}\le C^{*}$
and
$\tau\le\overline{\tau}$.
\end{lem}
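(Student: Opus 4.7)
The plan is to bound $\tau$ first, using the integral constraint \eqref{ISS-3} together with its companion \eqref{intg}, and then to bound $\|w\|_\infty$ by applying the maximum principle of Lemma \ref{MPlem} at the extrema of $w$. The key structural fact simplifying both steps is that $uv\equiv\tau$ is spatially constant on $\Omega$, which decouples the mixed competition terms.

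For the $\tau$-bound, substituting $v=\tau/u$ into \eqref{ISS-3} rewrites that constraint as
\begin{equation*}
c_1\tau|\Omega|=\int_{\Omega}\bigl(a_1 u-b_1 u^2\bigr).
\end{equation*}
Since the integrand is pointwise bounded above by $a_1^{2}/(4b_1)$, this immediately yields $\tau\le a_1^{2}/(4b_1 c_1)$. Integrating \eqref{ISS-1} over $\Omega$ and invoking \eqref{ISS-2}-\eqref{ISS-3} gives \eqref{intg}; substituting $u=\tau/v$ there and repeating the same argument yields $\tau\le a_2^{2}/(4b_2 c_2)$. Taking the minimum gives $\tau\le\overline{\tau}$.

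For $\|w\|_\infty$, the crucial observation is that for each fixed $\tau\ge0$ the scalar function $u\mapsto\delta u-\gamma\tau/u$ is strictly increasing on $(0,\infty)$, so the maximum (resp.\ minimum) of $w$ coincides with the maximum (resp.\ minimum) of $u$, equivalently with the minimum (resp.\ maximum) of $v=\tau/u$. Let $x_0\in\overline{\Omega}$ be a maximum point of $w$. Applying Lemma \ref{MPlem}(i) to \eqref{ISS-1}-\eqref{ISS-2} gives $f(u(x_0),v(x_0))-\gamma g(u(x_0),v(x_0))\ge 0$, which, after using $uv=\tau$ and discarding the nonpositive summand $-\gamma a_2 v(x_0)$, reduces to
\begin{equation*}
b_1 u(x_0)^2\le a_1 u(x_0)+|\gamma b_2-c_1|\,\overline{\tau}+\gamma c_2\,\overline{\tau}^{2}/u(x_0)^2.
\end{equation*}
Multiplying through by $u(x_0)^2$ converts this into a quartic inequality in $u(x_0)$ whose leading term $b_1 u(x_0)^4$ dominates the right-hand side of degree $\le 3$, hence $u(x_0)\le C_1=C_1(a_i,b_i,c_i,\gamma,\delta)$. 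A symmetric application of Lemma \ref{MPlem}(ii) at a minimum point of $w$ (where $v$ is maximal) bounds $v$ from above by some $C_2$; combining these estimates gives $\|w\|_\infty\le \delta C_1+\gamma C_2=:C^*$.

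The proof is essentially routine once the pointwise identity $uv\equiv\tau$ is exploited; the only subtle point is the apparently singular term $\gamma c_2 v(x_0)^2=\gamma c_2\tau^2/u(x_0)^2$ arising from the maximum principle, which is harmless because clearing denominators produces a quartic in which the positive leading term on the left forces $u(x_0)$ to be bounded. No smallness or nondegeneracy hypothesis on the parameters is needed, so this a priori estimate holds for every solution of \eqref{ISS}.
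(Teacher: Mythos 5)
Your proof is correct. The first half (bounding $\tau$ via \eqref{ISS-3} and \eqref{intg}) is identical to the paper's. For the sup bound on $w$, both you and the paper apply Lemma \ref{MPlem} at the extrema of $w$ (using that $u\mapsto\delta u-\gamma\tau/u$ is increasing, so extrema of $w$, $u$ and $v$ are aligned), but you then exploit the resulting scalar inequality differently: the paper locates the least and greatest zeros $z_{1}(\tau)$, $\overline z(\tau)$ of $h(\cdot,\tau)$ and bounds them through a three-way case analysis on the sign of $\gamma b_{2}-c_{1}$, whereas you simply clear the denominator in
\begin{equation*}
b_{1}u(x_{0})^{2}\le a_{1}u(x_{0})+|\gamma b_{2}-c_{1}|\,\overline{\tau}+\gamma c_{2}\overline{\tau}^{2}/u(x_{0})^{2}
\end{equation*}
and let the degree-four leading term dominate, with the symmetric quartic in $v$ at the minimum point. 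Your route is shorter and avoids the case analysis entirely; the paper's extra work is not wasted, however, since the explicit bounds on $z_{1}(\tau)$ and $\overline z(\tau)$ (in particular the linear-in-$\tau$ lower bound on $z_{1}(\tau)$) are reused later in the one-dimensional analysis, so the author has reason to set them up here. One small presentational point: when you discard $-\gamma a_{2}v(x_{0})$ you should note you are enlarging the right-hand side of an inequality of the form $b_{1}u^{2}\le(\cdots)$, which is what you implicitly do; as written the step is fine.
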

\begin{proof}
By \eqref{fgdef},
the integral constraint \eqref{ISS-3} is equivalent to
$$
\displaystyle\int_{\Omega}
\{\,u(a_{1}-b_{1}u)-c_{1}\tau\,\}=0.
$$
By the nonnegativity of $u$, one can see that
\begin{equation}\label{tauest1}
\tau=\dfrac{1}{c_{1}|\Omega |}\displaystyle\int_{\Omega }
u(a_{1}-b_{1}u)\le\dfrac{a_{1}^{\,2}}{4b_{1}c_{1}}.
\end{equation}
From \eqref{intg}, we can deduce that
any solution $(w,\tau)$ of \eqref{IS} satisfies
\begin{equation}\label{tauest2}
\tau\le\dfrac{a_{2}^{\,2}}{4b_{2}c_{2}}
\end{equation}
by a similar way to get \eqref{tauest1}.
Hence \eqref{tauest1} and \eqref{tauest2} give
the required a priori estimate for the $\tau$ component.

Setting $v=\tau /u$ in the nonlinear term of \eqref{ISS-1},
we introduce a function $h(u,\tau )$ as 
\begin{equation}\label{hdef}
h(u,\tau):=
u(a_{1}-b_{1}u)-c_{1}\tau
-\dfrac{\gamma\tau}{u}
\biggl(a_{2}-b_{2}u-\dfrac{c_{2}\tau}{u}\biggr).
\end{equation}
It is noted that \eqref{ISS-1} is equivalent to
$$
d\Delta w+h(u,\tau )=0\quad\mbox{in}\ \Omega,\quad \tau>0.
$$
We remark that $\lim_{u\searrow 0}h(u,\tau )=\infty$
and $\lim_{u\to\infty}h(u,\tau )=-\infty$
for each $\tau >0$.
Obviously $h(u,\tau )$ has at lease one,
and at most three zeros
on $\{\,u>0\,\}$ for each $\tau >0$.
In what follows,
the least zero and the greatest zero of
$h(u,\tau )$
$(u>0)$
will be denoted by $z_{1}(\tau )$ and $\overline{z}(\tau )$, respectively.
Our first task for the a priori estimate of $\|w\|_{\infty}$
is to derive a lower bound of $z_{1}(\tau )$ and
an upper bound of $\overline{z}(\tau )$.

In the case when $\gamma b_{2}>c_{1}$,
we observe 
\begin{equation}\label{hdef2}
h(u,\tau )=u(a_{1}-b_{1}u)+(\gamma b_{2}-c_{1})\tau
+\dfrac{\gamma\tau}{u^{2}}(c_{2}\tau-a_{2}u).
\end{equation}
It is easily verified that
\begin{equation}\label{1term}
u(a_{1}-b_{1}u)+(\gamma b_{2}-c_{1})\tau
\begin{cases}
>0\quad &\mbox{for}\ u\in (0,p_{1}(\tau )\tau),\\
<0\quad &\mbox{for}\ u\in (p_{1}(\tau )\tau,\infty)
\end{cases}
\end{equation}
with 
$$
p_{1}(\tau ):=\dfrac{2(\gamma b_{2} -c_{1})}
{\sqrt{a_{1}^{\,2}+4b_{1}(\gamma b_{2}-c_{1})\tau}-a_{1}}.
$$
and
\begin{equation}\label{2term}
\dfrac{\gamma\tau}{u^{2}}
(c_{2}\tau-a_{2}u)
\begin{cases}
>0\quad &\mbox{for}\ u\in (0, c_{2}\tau/a_{2}),\\
<0\quad &\mbox{for}\ u\in (c_{2}\tau/a_{2}, \infty).
\end{cases}
\end{equation}
It follows from \eqref{hdef2}-\eqref{2term} that,
in case $\gamma b_{2}>c_{1}$,
\begin{equation}\label{z1}
\min\biggl\{\,p_{1}(\tau )\tau, \dfrac{c_{2}}{a_{2}}\tau\,\biggr\}
\le z_{1}(\tau )\le\overline{z}(\tau )\le
\max\biggl\{\,p_{1}(\tau )\tau, \dfrac{c_{2}}{a_{2}}\tau\,\biggr\}
\quad
\mbox{for any}\ \tau >0.
\end{equation}
Obviously, if $\gamma b_{2}=c_{1}$,
then
\begin{equation}\label{z2}
\min\biggl\{\,\dfrac{a_{1}}{b_{1}}, \dfrac{c_{2}}{a_{2}}\tau\,\biggr\}
\le z_{1}(\tau )\le\overline{z}(\tau )\le
\max\biggl\{\,\dfrac{a_{1}}{b_{1}}, \dfrac{c_{2}}{a_{2}}\tau\,\biggr\}
\quad
\mbox{for any}\ \tau >0.
\end{equation}

In the case when $\gamma b_{2}<c_{1}$,
we observe that
\begin{equation}\label{fterm}
u(a_{1}-b_{1}u)\begin{cases}
>0\quad&\mbox{for}\ u\in (0, a_{1}/b_{1}),\\
<0\quad&\mbox{for}\ u\in (a_{1}/b_{1}, \infty)
\end{cases}
\end{equation}
and
\begin{equation}\label{sterm}
(\gamma b_{2}-c_{1})\tau+\dfrac{\gamma\tau}{u^{2}}
(c_{2}\tau-a_{2}u)
\begin{cases}
>0\quad &\mbox{for}\ u\in (0,p_{2}(\tau )\tau ),\\
<0\quad &\mbox{for}\ u\in (p_{2}(\tau )\tau, \infty)
\end{cases}
\end{equation}
with 
$$
p_{2}(\tau ):=
\dfrac{2\gamma c_{2}}
{\sqrt{(\gamma a_{2})^{2}+4(c_{1}-\gamma b_{2})c_{2}\tau}+\gamma a_{2}}.
$$
In view of \eqref{hdef2}, we know from
\eqref{fterm} and \eqref{sterm} that,
in case $\gamma b_{2}<c_{1}$,
\begin{equation}\label{z3}
\min\biggl\{\,\dfrac{a_{1}}{b_{1}}, p_{2}(\tau )\tau\,\biggr\}
\le z_{1}(\tau )\le\overline{z}(\tau )\le
\max\biggl\{\,\dfrac{a_{1}}{b_{1}}, p_{2}(\tau )\tau\,\biggr\}
\quad
\mbox{for any}\ \tau >0.
\end{equation}

Suppose that $(w,\tau )$ is any solution of \eqref{ISS}.
Let $x_{*}\in\overline{\Omega }$ be a minimum point of 
$w$, namely,
$w(x_{*})=\min_{x\in\overline{\Omega }}w(x)$.
Then, the application of (ii) of Lemma \ref{MPlem}
to \eqref{ISS-1}-\eqref{ISS-2} implies
$$
h(u(x_{*}), \tau )\le 0,
$$
which leads to
$$
z_{1}(\tau )\le u(x_{*})=\dfrac{2\gamma \tau}
{\sqrt{w(x_{*})^{2}+4\gamma\delta\tau}-w(x_{*})}.
$$
Therefore, we see that
$w_{-}(x):=-\min\{\,w(x),0\,\}$
satisfies
$$
w_{-}(x_{*})=\dfrac{|w(x_{*})|-w(x_{*})}{2}<
\dfrac{\sqrt{w(x_{*})^{2}+4\gamma\delta\tau}-w(x_{*})}{2}
\le\dfrac{\gamma\tau}{z_{1}(\tau )}.
$$
From \eqref{z1}, \eqref{z2} and \eqref{z3}, 
we can find a positive constant 
$C_{1}=C_{1}(a_{i}, b_{i}, c_{i}, \gamma)$
such that
\begin{equation}\label{w-}
w_{-}(x_{*})\le C_{1}\quad\mbox{for any}\ 
\tau\in (0,\overline{\tau}\,].
\end{equation}

Let $x^{*}\in\overline{\Omega}$ be a maximum point of $w$;
$w(x^{*})=\max_{x\in\overline{\Omega }}w(x)$.
By applying (i) of Lemma \ref{MPlem} to \eqref{ISS-1}-\eqref{ISS-2},
we see that
$h(u(x^{*}), \tau )\ge 0$,
which yields
$$
u(x^{*})=\dfrac{\sqrt{w(x^{*})^{2}+4\gamma\delta\tau}+w(x^{*})}
{2\delta }\le \overline{z}(\tau ).
$$
Then
$w_{+}(x):=\max\{\,w(x),0\,\}$
satisfies
$$
w_{+}(x^{*})=\dfrac{|w(x^{*})|+w(x^{*})}{2}\le
\dfrac{\sqrt{w(x^{*})^{2}+4\gamma\delta\tau}+w(x^{*})}{2}
=\delta u(x^{*})\le\delta \overline{z}(\tau ).
$$
From
\eqref{z1}, \eqref{z2} and \eqref{z3},
we can find a positive constant
$C_{2}=C_{2}(a_{i}, b_{i}, c_{i}, \gamma, \delta )$
such that
\begin{equation}\label{w+}
w_{+}(x^{*})\le C_{2}
\quad\mbox{for any}\ 
\tau\in (0,\overline{\tau}\,].
\end{equation}
It follows from \eqref{w-} and \eqref{w+} that
$\|w\|_{\infty}\le C^{*}:=\max\{\,C_{1}, C_{2}\,\}$.
The proof of Lemma \ref{aprlem} is complete.
\end{proof}
With the aid of the elliptic regularity theory,
Lemma \ref{aprlem} leads to
the following a priori estimate of solutions of 
\eqref{ISS}.
\begin{cor}\label{Mcor}
For any $\varepsilon >0$,
there exists a positive constant
$C_{1}^{*}=C_{1}^{*}(\varepsilon, a_{i}, b_{i}, c_{i}, \gamma, \delta)$
which is independent of $\tau$
such that
if $d\ge\varepsilon$,
then
any solution $(w,\tau )$ of \eqref{ISS} satisfies
$\|w\|_{C^{1}(\overline{\Omega})}<C_{1}$.
\end{cor}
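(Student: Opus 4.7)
The plan is to upgrade the $L^\infty$ estimate of Lemma \ref{aprlem} to a $C^1$ estimate by a direct application of standard elliptic regularity, once one observes that the reaction term in \eqref{ISS-1} is uniformly bounded in $L^\infty$.

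First, I would invoke Lemma \ref{aprlem} to obtain $\|w\|_\infty\le C^*$ and $\tau\le\overline{\tau}$ for every solution $(w,\tau)$ of \eqref{ISS}. From the explicit formulas \eqref{uvdef},
$$u=\frac{\sqrt{w^{2}+4\gamma\delta\tau}+w}{2\delta},\qquad v=\frac{\sqrt{w^{2}+4\gamma\delta\tau}-w}{2\gamma},$$
both $u$ and $v=\tau/u$ are then bounded in $L^{\infty}(\Omega)$ by constants depending only on $(a_i,b_i,c_i,\gamma,\delta)$. Since $f$ and $g$ are polynomials in $(u,v)$, the nonlinearity
$$f(u,v)-\gamma g(u,v)$$
is bounded in $L^{\infty}(\Omega)$ by some $M=M(a_i,b_i,c_i,\gamma,\delta)$. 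It is worth noting that the apparent singularity of the equivalent form \eqref{hdef} of $h(u,\tau)$ at $u=0$ is harmless: the offending terms $\gamma\tau a_2/u$ and $\gamma c_2\tau^{2}/u^{2}$ equal $\gamma a_2 v$ and $\gamma c_2 v^{2}$, both controlled by the $L^\infty$ bound on $v$.

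With this in hand, I would rewrite \eqref{ISS-1} as
$$\Delta w=-\frac{f(u,v)-\gamma g(u,v)}{d},\qquad \partial_{\nu}w=0\ \ \mbox{on}\ \partial\Omega,$$
so that under the hypothesis $d\ge\varepsilon$ the right-hand side is bounded in $L^{\infty}(\Omega)$ by $M/\varepsilon$. Combining this with the $L^\infty$ bound $\|w\|_\infty\le C^{*}$ already obtained, standard $L^{p}$ elliptic regularity for the Neumann Laplacian (cf.\ \cite{GT}) gives $\|w\|_{W^{2,p}(\Omega)}\le K_{p}$ for every $p\in(1,\infty)$, with $K_{p}$ depending only on $\varepsilon,p,\Omega$ and on $(a_i,b_i,c_i,\gamma,\delta)$. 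Choosing any $p>N$ and applying the Sobolev embedding $W^{2,p}(\Omega)\hookrightarrow C^{1,\theta}(\overline{\Omega})$ with $\theta=1-N/p\in(0,1)$ yields the asserted bound $\|w\|_{C^{1}(\overline{\Omega})}<C_{1}^{*}$, with $C_{1}^{*}$ independent of $\tau$.

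There is no real obstacle here; the only point that deserves a moment's attention is the cancellation mentioned above, which ensures that the nonlinear term remains uniformly bounded despite the fact that $u$ itself has no positive lower bound when $\tau$ is allowed to approach $0$. Everything else is a verbatim application of the Lemma, the $L^\infty$ bounds on $u,v$ implied by \eqref{uvdef}, and the Calder\'on--Zygmund theory for the Neumann problem.
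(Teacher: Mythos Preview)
Your proof is correct and follows essentially the same route as the paper: invoke Lemma~\ref{aprlem} to bound the nonlinearity in $L^\infty$, divide by $d\ge\varepsilon$, apply $L^p$ elliptic regularity for the Neumann problem, and then Sobolev-embed into $C^1$. You have in fact spelled out more detail than the paper's own three-line sketch, including the useful remark that the apparent $1/u$ singularity in $h(u,\tau)$ is removed once one writes the terms via $v=\tau/u$.
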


\begin{proof}
By the combination of  Lemma \ref{aprlem}
and the elliptic regularity theory,
we find a positive constant 
$C_{0}=C_{0}(p, a_{i}, b_{i}, c_{i}, \gamma, \delta)$ such that
any solution $(w,\tau )$ of \eqref{ISS} satisfies
$d\|w\|_{W^{2,p}}\le C_{0}$ for any $p>1$.
Hence the Sobolev embedding theorem ensures
$C_{1}^{*}$ fulfilling the required estimate.
\end{proof}

The next result asserts the nonexistence
of nonconstant solutions of \eqref{ISS}
when $d$ is sufficiently large.
\begin{lem}\label{nonexlem}
There exists a positive constant
$\overline{d}=\overline{d}
(a_{i}, b_{i}, c_{i}, \gamma, \delta)$
which is independent of $\tau$ such that
\eqref{ISS} does not have any nonconstant
solution
if $d>\overline{d}$.
\end{lem}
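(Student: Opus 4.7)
The plan is to run the standard energy argument: test the elliptic equation \eqref{ISS-1} against $w-\bar{w}$, where $\bar{w}:=|\Omega|^{-1}\int_\Omega w$, integrate by parts using \eqref{ISS-2}, and exploit the Poincar\'e inequality to force $\nabla w\equiv 0$ whenever $d$ is larger than a computable threshold depending only on $(a_i,b_i,c_i,\gamma,\delta)$.

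Write \eqref{ISS-1} as $d\Delta w+h(u,\tau)=0$ with $h$ defined in \eqref{hdef}, and set $H(w,\tau):=h(u(w,\tau),\tau)$ via \eqref{uvdef}. Integrating \eqref{ISS-1} over $\Omega$ together with \eqref{ISS-2} and the consequence \eqref{intg} of \eqref{ISS-3} gives $\int_\Omega H(w,\tau)=0$. Multiplying \eqref{ISS-1} by $w-\bar{w}$ and integrating by parts, I obtain
\begin{equation*}
d\int_\Omega |\nabla w|^2 \;=\; \int_\Omega H(w,\tau)(w-\bar{w}) \;=\; \int_\Omega \bigl[H(w,\tau)-H(\bar{w},\tau)\bigr](w-\bar{w}),
\end{equation*}
where the last equality uses that $H(\bar{w},\tau)$ is constant and $\int_\Omega(w-\bar{w})=0$.

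The key step is a uniform Lipschitz bound: there exists $L=L(a_i,b_i,c_i,\gamma,\delta)$ such that
\begin{equation*}
|H(w_1,\tau)-H(w_2,\tau)| \le L|w_1-w_2| \qquad \text{for all } w_1,w_2\in[-C^{*},C^{*}],\; \tau\in(0,\overline{\tau}\,],
\end{equation*}
with $C^{*},\overline{\tau}$ from Lemma~\ref{aprlem}. This follows because \eqref{uvdef} yields
\begin{equation*}
\partial_w u \;=\; \tfrac{1}{2\delta}\Bigl(1+\tfrac{w}{\sqrt{w^2+4\gamma\delta\tau}}\Bigr),\qquad \partial_w v \;=\; \tfrac{1}{2\gamma}\Bigl(\tfrac{w}{\sqrt{w^2+4\gamma\delta\tau}}-1\Bigr),
\end{equation*}
each of absolute value at most $1/\delta$ and $1/\gamma$ respectively, uniformly in $\tau>0$; meanwhile $h$, rewritten as $h=u(a_1-b_1u)-c_1\tau-\gamma v(a_2-b_2u-c_2v)$ with $\tau=uv$, is a polynomial in $(u,v)$ whose partial derivatives are bounded on the compact range of $(u,v)$ determined by $\|w\|_\infty\le C^{*}$ and $\tau\le\overline{\tau}$.

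Plugging this Lipschitz estimate back into the energy identity and invoking the Poincar\'e inequality $\|w-\bar{w}\|_2^2\le\lambda_1^{-1}\|\nabla w\|_2^2$ (with $\lambda_1$ as in \eqref{Lap}), I get
\begin{equation*}
d\int_\Omega |\nabla w|^2 \;\le\; L\int_\Omega (w-\bar{w})^2 \;\le\; \tfrac{L}{\lambda_1}\int_\Omega |\nabla w|^2.
\end{equation*}
Hence, setting $\overline{d}:=L/\lambda_1$, any solution with $d>\overline{d}$ must satisfy $\nabla w\equiv 0$, so $w$ is constant, proving the lemma. The main technical hurdle is extracting the $\tau$-uniform Lipschitz constant $L$; this is precisely where the explicit formulas \eqref{uvdef} pay off, since the apparent singularity of $v=\tau/u$ as $u\searrow 0$ is cancelled by the identity $v=(\sqrt{w^2+4\gamma\delta\tau}-w)/(2\gamma)$.
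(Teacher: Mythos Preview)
Your proof is correct and takes a genuinely different route from the paper. The paper rewrites $d\Delta w=-d(\delta\Delta u-\gamma\Delta v)$, tests against $u-\overline{u}$ and $v-\overline{v}$ separately, subtracts, and then exploits the sign $\int_\Omega\nabla u\cdot\nabla v=-\tau\int_\Omega|\nabla u/u|^2<0$ (coming from $v=\tau/u$) to kill the cross term before applying Poincar\'e to $u$ and $v$; this yields the explicit threshold $\overline{d}=\max\{M/(\gamma\lambda_1),M/(\delta\lambda_1)\}$. You instead stay in the single variable $w$, test against $w-\bar{w}$, and reduce everything to a $\tau$-uniform Lipschitz bound on $w\mapsto f(u(w,\tau),v(w,\tau))-\gamma g(u(w,\tau),v(w,\tau))$, obtained by combining the elementary bounds $|\partial_w u|\le 1/\delta$, $|\partial_w v|\le 1/\gamma$ with the fact that $f-\gamma g$ is a polynomial whose gradient is bounded on the compact $(u,v)$-range furnished by Lemma~\ref{aprlem}. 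Your argument is more direct and avoids the clever sign observation on $\int\nabla u\cdot\nabla v$; the paper's approach, on the other hand, makes the competition structure visible and produces a somewhat more explicit constant. Both lead to $\overline{d}=L/\lambda_1$ for a constant $L$ depending only on $(a_i,b_i,c_i,\gamma,\delta)$.
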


\begin{proof}
For any nonconstant solution $(w,\tau )$ of \eqref{ISS},
let $(u,v)$ be as in \eqref{uvdef}.
Then it follows that
$$
\begin{cases}
-d(\delta\Delta u -\gamma\Delta v)=f(u,v)-\gamma g(u,v)
\quad&\mbox{in}\ \Omega,\\
\partial_{\nu}u=\partial_{\nu}v=0
\quad&\mbox{on}\ \partial\Omega.
\end{cases}
$$
By taking the $L^{2}(\Omega )$ inner product of the elliptic equation with
$$u-\overline{u}\quad\mbox{and}\quad v-\overline{v},
\quad\mbox{where}\quad
\overline{u}:=\dfrac{1}{|\Omega|}\displaystyle\int_{\Omega }u,
\quad
\overline{v}:=\dfrac{1}{|\Omega|}\displaystyle\int_{\Omega }v,
$$
we have
$$
d\biggl(
\delta\|\nabla u\|_{2}^{2}-\gamma\displaystyle\int_{\Omega}
\nabla u\cdot\nabla v\biggr)
=
\displaystyle\int_{\Omega}f(u,v)(u-\overline{u})
-\gamma\displaystyle\int_{\Omega }g(u,v)(u-\overline{u})
$$
and
$$
d\biggl(\delta\displaystyle\int_{\Omega}
\nabla u\cdot\nabla v-
\gamma\|\nabla v\|_{2}^{2}\biggr)
=
\displaystyle\int_{\Omega}f(u,v)(v-\overline{v})
-\gamma\displaystyle\int_{\Omega }g(u,v)(v-\overline{v}),
$$
respectively.
Subtracting the second identity form the first one,
we get
\begin{equation}\label{id}
d\biggl(
\delta\|\nabla u\|^{2}_{2}
-(\gamma +\delta )
\displaystyle\int_{\Omega}
\nabla u\cdot\nabla v
+\gamma\|\nabla v\|^{2}_{2}
\biggr)
=
\displaystyle\int_{\Omega }
\{\,f(u,v)-\gamma g(u,v)\,\}\{\,(u-\overline{u})-(v-\overline{v})\,\}.
\end{equation}
Noting that $\int_{\Omega}(u-\overline{u})=\int_{\Omega}(v-\overline{v})=0$,
we substitute \eqref{fgdef} and $uv=\tau$ into 
$f(u,v)-\gamma g(u,v)$ to see
\begin{equation}\label{r1}
\begin{split}
\displaystyle\int_{\Omega}
&\{\,f(u,v)-\gamma g(u,v)\,\}(u-\overline{u})
=
\displaystyle\int_{\Omega}
\{\,u(a_{1}-b_{1}u)-\gamma v(a_{2}-c_{2}v)\,\}(u-\overline{u})\\
=&
a_{1}\displaystyle\int_{\Omega}(u-\overline{u})^{2}
-b_{1}\displaystyle\int_{\Omega}(u^{2}-\overline{u}^{2})(u-\overline{u})
-\gamma a_{2}\displaystyle\int_{\Omega }
(v-\overline{v})(u-\overline{u})
+\gamma c_{2}\displaystyle\int_{\Omega }
(v^{2}-\overline{v}^{2})(u-\overline{u})\\
=&
\displaystyle\int_{\Omega }
\{\,a_{1}-b_{1}(u+\overline{u})\,\}(u-\overline{u})^{2}
-\gamma\displaystyle\int_{\Omega}
\{\,a_{2}-c_{2}(v+\overline{v})\,\}(u-\overline{u})(v-\overline{v}).
\end{split}
\end{equation}
Similarly, one can obtain
\begin{equation}\label{r2}
\begin{split}
\displaystyle\int_{\Omega}
&\{\,f(u,v)-\gamma g(u,v)\,\}(v-\overline{v})\\
=&\displaystyle\int_{\Omega}\{\,a_{1}-b_{1}(u+\overline{u})\,\}
(u-\overline{u})(v-\overline{v})
-\gamma\displaystyle\int_{\Omega}
\{\,a_{2}-c_{2}(v+\overline{v})\,\}(v-\overline{v})^{2}.
\end{split}
\end{equation}
Here we remark that \eqref{uvdef} and Lemma \ref{aprlem}
ensure a positive constant 
$M=M(a_{i}, b_{i}, c_{i}, \gamma, \delta )$
such that
\begin{equation}\label{Mdef}
\|a_{1}-b_{1}(u+\overline{u})\|_{\infty}\le\dfrac{M}{2}
\quad\mbox{and}\quad
\gamma\|a_{2}-c_{2}(v+\overline{v})\|_{\infty}\le\dfrac{M}{2}.
\end{equation}
Substituting \eqref{r1} and \eqref{r2} into \eqref{id} and
using \eqref{Mdef} and the Schwarz inequality, we obtain
$$
d\biggl(
\delta\|\nabla u\|^{2}_{2}
-(\gamma +\delta )
\displaystyle\int_{\Omega}
\nabla u\cdot\nabla v
+\gamma\|\nabla v\|^{2}_{2}
\biggr)
\le M(\,\|u-\overline{u}\|^{2}_{2}+\|v-\overline{v}\|^{2}_{2}\,).
$$
Here we recall the Poincar\'e-Wirtinger inequality;
$$
\lambda_{1}\|U-\overline{U}\|^{2}_{2}\le\|\nabla U\|^{2}_{2}
\quad\mbox{for any}\ U\in H^{1}(\Omega ),
$$
where $\lambda_{1}$ represents the least positive eigenvalue
of \eqref{Lap}.
Therefore, we obtain
\begin{equation}\label{PW}
d\biggl(
\delta\|\nabla u\|^{2}_{2}
-(\gamma +\delta )
\displaystyle\int_{\Omega}
\nabla u\cdot\nabla v
+\gamma\|\nabla v\|^{2}_{2}
\biggr)
\le \dfrac{M}{\lambda_{1}}(\,\|\nabla u\|_{2}^{2}
+\|\nabla v\|^{2}_{2}\,).
\end{equation}
It follows from $v=\tau /u$ that
$$
\displaystyle\int_{\Omega}
\nabla u\cdot\nabla v=
\displaystyle\int_{\Omega}
\nabla u\cdot\nabla\biggl(\dfrac{\tau}{u}\biggr)=
-\tau\displaystyle\int_{\Omega}\biggl|
\dfrac{\nabla u}{u}\biggr|^{2}<0.
$$
Then we know from \eqref{PW} that
$$
\biggl(d\delta-\dfrac{M}{\lambda_{1}}\biggr)
\|\nabla u\|^{2}_{2}+
\biggl(d\gamma-\dfrac{M}{\lambda_{1}}\biggr)
\|\nabla v\|^{2}_{2}\le 0,
$$
which concludes
$$
d\le \overline{d}:=\max\biggl\{\,
\dfrac{M}{\gamma \lambda_{1}},
\dfrac{M}{\delta \lambda_{1}}\,\biggr\}
$$
because neither $u$ nor $v=\tau/u$ is constant.
Then we complete the proof of Lemma \ref{nonexlem}.
\end{proof}

\begin{proof}[Proof of Theorem \ref{nonexthm}]
Theorem \ref{nonexthm}  follows from 
Lemmas \ref{aprlem} and \ref{nonexlem}.
\end{proof}

\section{Existence of nonconstant solutions
for the full cross-diffusion limit}
In order to find nonconstant solutions of \eqref{ISS}
by the Leray-Schauder degree theory,
we set up a functional space $X$ and a compact operator
$F\,:\,\mathbb{R}\times X\to X$ as follows:
$
X:=C^{1}(\overline{\Omega})\times \mathbb{R}
$
and 
\begin{equation}\label{Fdef}
F(d,w,\tau):=
\left[
\begin{array}{l}
F^{(1)}(d,w,\tau)\\
F^{(2)}(d,w,\tau)
\end{array}
\right],
\end{equation}
where
\begin{equation}
\begin{split}
&F^{(1)}(d,w,\tau):=
(I-\Delta)^{-1}\left[
w+\dfrac{f(u(w,\tau ), v(w,\tau ))
-\gamma g(u(w,\tau ), v(w, \tau ))}{d}\right], \\
&F^{(2)}(d,w,\tau):=
\dfrac{1}{c_{1}|\Omega |}\int_{\Omega }
u(w, \tau )\{\,a_{1}-b_{1}u(w, \tau )\,\}.
\end{split}
\nonumber
\end{equation}
Here 
$(u(w,\tau ), v(w,\tau ))$ is as in \eqref{uvdef}
and
$(I-\Delta )^{-1}$ 
is regarded as
a composition of 
the
inverse operator of
$I-\Delta\,:\,\,W^{2,p}_{\nu}(\Omega)
\,(\,:=\{\,w\in W^{2,p}(\Omega )\,:\,
\partial_{\nu}w=0\ \mbox{on}\ \partial\Omega\,\}\,)\to L^{p}(\Omega )$
with the domain restricted to $C^{1}(\overline{\Omega})$ and
the compact embedding from 
$W^{2,p}(\Omega )$ 
into $C^{1}(\overline{\Omega })$
with $p>N$.
Then,
for any $d>0$,
each weak solution of \eqref{ISS}
is corresponding to each fixed point of $F(d,\,\sbt\,,\,\sbt\,)$.
To find fixed points with $\tau >0$,
we introduce a bounded set $S_{\eta, M}$ in $X$ as 
$$
S_{\eta, M}:=
\{\,(w,\tau)\in X\,:\,\eta<\|w\|_{C^{1}(\overline{\Omega})}<M,\
\eta<\tau<M\,\}
$$
for $0<\eta<M$.
In the following lemma,
$C_{1}^{*}$ and
$\overline{\tau}$ are positive constants
obtained in Theorem \ref{nonexthm}.
\begin{lem}\label{bddlem}
Assume $A\neq B$ and $A\neq C$.
Furthermore, 
especially in the weak competition case 
$C<A<B$ or
the strong competition case 
$B<A<C$,
assume that $(u^{*}, v^{*})$ in \eqref{const} satisfies
$u^{*}/v^{*}\neq \gamma /\delta$. 
Then, for any small $\varepsilon >0$, there exists a small 
$\eta=\eta(\varepsilon )>0$ such that
if $d\ge\varepsilon$, then
any solution $(w, \tau )$ of \eqref{ISS} satisfies
$(w,\tau )\not\in \partial S_{\eta,M}$,
where
$M$ is any constant satisfying $M>C_{1}^{*}$ and $M>\overline{\tau}$.
\end{lem}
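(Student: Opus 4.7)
The plan is to argue by contradiction. Suppose there exist sequences $d_n \ge \varepsilon$, $\eta_n \searrow 0$, and solutions $(w_n, \tau_n)$ of \eqref{ISS} with $(w_n, \tau_n) \in \partial S_{\eta_n, M}$. Since $M > C_{1}^{*}$ and $M > \overline{\tau}$, Corollary \ref{Mcor} and the $\tau$-bound in Theorem \ref{nonexthm} exclude the upper faces, so either $\|w_n\|_{C^1(\overline{\Omega})} = \eta_n \to 0$ or $\tau_n = \eta_n \to 0$. Because $d_n \ge \varepsilon$, the equation $d_n\Delta w_n = -h(u_n,\tau_n)$ and elliptic regularity upgrade the $C^1$-bound of Corollary \ref{Mcor} to $W^{2,p}$-compactness, so after extracting subsequences $(w_n, \tau_n) \to (w_\infty, \tau_\infty)$ in $C^1(\overline{\Omega})\times\mathbb{R}$ and $d_n \to d_\infty \in [\varepsilon, \infty]$, with $w_\infty \equiv 0$ or $\tau_\infty = 0$.

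The case $d_\infty = \infty$ is handled directly by Theorem \ref{nonexthm}: eventually $d_n > \overline{d}$ forces $w_n$ to be constant, and the only constant solution with $\tau>0$ is $(w^*, \tau^*)$, which under the standing hypotheses satisfies $w^* \neq 0$ and $\tau^* > 0$, excluding $\eta_n \to 0$. When $d_\infty < \infty$ and $\tau_\infty > 0$, the equation and integral constraint pass to the limit, giving a solution of \eqref{ISS}; if in addition $w_n \to 0$ in $C^1$, then $w_\infty \equiv 0$ and the limit is a constant solution with $w=0$ and $\tau>0$, which forces $(u_\infty, v_\infty) = (u^*, v^*)$ with $u^*/v^* = \gamma/\delta$, contradicting the hypothesis (or contradicting the nonexistence of a positive $(u^*,v^*)$ outside the weak/strong competition regimes).

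When $d_\infty < \infty$ and $\tau_\infty = 0$, the limit $(w_\infty, 0)$ satisfies \eqref{CS} as in Theorem \ref{limthm}(ii), and Proposition \ref{CSprop} pins $w_\infty \in \{0,\ \delta a_1/b_1,\ -\gamma a_2/c_2\}$. For $w_\infty = \delta a_1/b_1$ one has $u_n \to a_1/b_1$ uniformly, $v_n \to 0$ uniformly, and hence $a_2 - b_2 u_n - c_2 v_n \to (a_2 b_1 - a_1 b_2)/b_1 \neq 0$ (by $A \neq B$) with a fixed sign for large $n$; since $v_n > 0$, the identity $\int_\Omega v_n(a_2 - b_2 u_n - c_2 v_n) = 0$ obtained from \eqref{intg} is then impossible. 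The case $w_\infty = -\gamma a_2/c_2$ is symmetric, using $A \neq C$ and \eqref{ISS-3}.

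The delicate remaining subcase is $w_\infty \equiv 0$ with $\tau_\infty = 0$, in which $\|u_n\|_\infty, \|v_n\|_\infty \to 0$ and one cannot pass to a pointwise limit. Here the plan is to exploit the pointwise identity $u_n v_n \equiv \tau_n$ and the AM--GM inequality $u_n + v_n \ge 2\sqrt{\tau_n}$ combined with the two integral constraints
$$
a_1 \int_\Omega u_n = c_1 \tau_n |\Omega| + b_1 \int_\Omega u_n^{2}, \qquad a_2 \int_\Omega v_n = b_2 \tau_n |\Omega| + c_2 \int_\Omega v_n^{2}.
$$
Summing these and integrating the AM--GM bound yields $\int_\Omega u_n^{2} + \int_\Omega v_n^{2} \ge c\sqrt{\tau_n}$ for some $c>0$ and $n$ large. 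On the other hand, $\int u_n^{2} \le \|u_n\|_\infty \int u_n$ reinjected into the first integral identity forces $\int u_n^{2} \lesssim \|u_n\|_\infty\,\tau_n$, and similarly for $v_n$. Combining, at least one of $\|u_n\|_\infty, \|v_n\|_\infty$ must be $\gtrsim 1/\sqrt{\tau_n} \to \infty$, contradicting their uniform decay to $0$. This AM--GM quantitative estimate near the degenerate corner $(w,\tau)=(0,0)$ is the main obstacle I anticipate.
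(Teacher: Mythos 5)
Your proof is correct, and its skeleton (contradiction, compactness via Corollary \ref{Mcor} and elliptic regularity for $d\ge\varepsilon$, then a case split according to whether $\|w_n\|_{C^1}\to 0$ and/or $\tau_n\to 0$) matches the paper's; the cases $\tau_\infty>0$ with $w_\infty\equiv 0$ and $\tau_\infty=0$ with $w_\infty\not\equiv 0$ are handled exactly as in the paper (via $u^*/v^*\neq\gamma/\delta$ and via Proposition \ref{CSprop} together with $A\neq B$, $A\neq C$, respectively). Where you genuinely diverge is the degenerate corner $w_n\to 0$, $\tau_n\to 0$: the paper normalizes $\widetilde{w}_n=w_n/\|w_n\|_\infty$, passes to the limit in the rescaled equation and constraints, and derives $\int_\Omega\widetilde{w}_+=\int_\Omega\widetilde{w}_-=0$ against $\|\widetilde{w}\|_\infty=1$, whereas you use only the pointwise identity $u_nv_n\equiv\tau_n$, AM--GM, and the two integral identities. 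Your route is sound and, I would say, cleaner: it never touches the PDE in this subcase and avoids the delicate question of how $\tau_n$ compares with $\|w_n\|_\infty^2$ in the rescaled quantities. The step you flag as the anticipated obstacle in fact closes more directly than you describe: once you have $\int_\Omega u_n\le \tfrac{2c_1|\Omega|}{a_1}\tau_n$ and $\int_\Omega v_n\le\tfrac{2b_2|\Omega|}{a_2}\tau_n$ (from $\int u_n^2\le\|u_n\|_\infty\int u_n$ with $\|u_n\|_\infty\to 0$, reinjected into the identities), comparing with $\int_\Omega(u_n+v_n)\ge 2\sqrt{\tau_n}\,|\Omega|$ gives $2\le C\sqrt{\tau_n}\to 0$ immediately, with no need to pass through the $L^2$ norms. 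One small point of care: in the $\tau_\infty>0$ subcase your parenthetical correctly covers the regime outside weak/strong competition, where no positive root of $f=g=0$ exists at all; the paper leaves this implicit.
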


\begin{proof}
It follows from Theorem \ref{nonexthm} and Corollary \ref{Mcor} that 
any solution $(w,\tau )$ of \eqref{ISS} with 
$d\ge\varepsilon$ satisfies
$(w, \tau )\in B_{M}:=
\{\,(w,\tau )\in X\,:\,
\|w\|_{C^{1}(\overline{\Omega})}<M,\quad
0\le \tau <M\,\}$
if $M>C_{1}^{*}$ and $M>\overline{\tau}$.
Obviously,
semitrivial solutions $(w,\tau )=(\delta a_{1}/b_{1}, 0)$,
$(-\gamma a_{2}/c_{2}, 0)$
and the trivial solution $(w,\tau )=(0,0)$
are not contained in the closure of
$S_{\eta, M}$ because $\eta >0$.

Then our task is to prove that for any small $\varepsilon>0$,
there exists $\eta =\eta (\varepsilon )>0$ such that
any solution $(w,\tau)$ of \eqref{ISS} with $d\ge\varepsilon$
satisfies
$\|w\|_{C^{1}(\overline{\Omega})}>\eta$
and $\tau >\eta$.
Suppose for contradiction that there exists
$\hat{\varepsilon}>0$
such that 
for any small $\eta >0$,
there exists some $\hat{d}=\hat{d}(\eta )\ge\hat{\varepsilon}$ such that
\eqref{ISS} with $d=\hat{d}$ has a solution
$(\hat{w}(\eta ), \hat{\tau}(\eta ))$ satisfying
$\|\hat{w}(\eta )\|_{C^{1}(\overline{\Omega })}\le\eta$ or $\tau\le\eta$.
By virtue of Lemma \ref{nonexlem},
we can choose a subsequence 
$(\hat{d}_{n}, w_{n}, \tau_{n})\in [\,\hat{\varepsilon}, \overline{d}\,]\times B_{M}$
of 
$\{\,(\hat{d}(\eta ), \hat{w}(\eta ), \hat{\tau}(\eta ))\,\}_{\eta >0}$
such that
$$\|w_{n}\|_{C^{1}(\overline{\Omega })}\to 0\quad\mbox{or}\quad
\tau_{n}\to 0$$
and
$\hat{d}_{n}\to d_{\infty}$ with some $d_{\infty}\in
[\,\hat{\varepsilon}, \overline{d}\,]$
as $n\to\infty$.

Suppose that
$\|w_{n}\|_{C^{1}(\overline{\Omega })}\to 0$
and $\limsup_{n\to\infty}\tau_{n}>0$. We
may assume $\tau_{n}\to\tau_{0}>0$ by 
passing to a subsequence if necessary.
By \eqref{uvdef}, one can see that
\begin{equation}\label{unvn}
\begin{split}
u_{n}:=
\dfrac{\sqrt{w_{n}^{2}+4\gamma\delta\tau_{n}}+w_{n}}{2\delta}
\to\sqrt{\dfrac{\gamma\tau_{0}}{\delta}}=:u_{\infty}
\quad&\mbox{in}\ C^{1}(\overline{\Omega }),\\
v_{n}:=
\dfrac{\sqrt{w_{n}^{2}+4\gamma\delta\tau_{n}}-w_{n}}{2\gamma}
\to\sqrt{\dfrac{\delta\tau_{0}}{\gamma}}=:v_{\infty}
\quad&\mbox{in}\ C^{1}(\overline{\Omega}).
\end{split}
\nonumber
\end{equation}
Setting $n\to\infty$ in \eqref{ISS-3} and \eqref{intg}, we get
$f(u_{\infty},v_{\infty})=g(u_{\infty}, v_{\infty})=0$.
Hence it follows that 
$(u_{\infty}, v_{\infty})=(u^{*}, v^{*})$.
However, this is impossible under the assumption
$u^{*}/v^{*}\neq \gamma /\delta$.

Suppose that
$\|w_{n}\|_{C^{1}(\overline{\Omega })}\to 0$
and $\tau_{n}\to 0$.
We set
\begin{equation}\label{below}
\widetilde{w}_{n}(x):=
\dfrac{w_{n}(x)}{\|w_{n}\|_{\infty}}.
\end{equation}
Substituting \eqref{fgdef} and \eqref{uvdef} into \eqref{ISS}-\eqref{intg}
and dividing the resulting expressions by $\|w_{n}\|_{\infty}$,
one can see that
\begin{equation}\label{tildeell}
\begin{cases}
\hat{d}_{n}\Delta\widetilde{w}_{n}+
\dfrac{\sqrt{w_{n}^{2}+4\gamma\delta\tau_{n}}+w_{n}}
{2\delta\|w_{n}\|_{\infty}}
\biggl(a_{1}-b_{1}
\dfrac{\sqrt{w_{n}^{2}+4\gamma\delta\tau_{n}}+w_{n}}
{2\delta}
-c_{1}
\dfrac{\sqrt{w_{n}^{2}+4\gamma\delta\tau_{n}}-w_{n}}
{2\gamma}\biggr) \vspace{1mm} \\
-\dfrac{\sqrt{w_{n}^{2}+4\gamma\delta\tau_{n}}-w_{n}}
{2\|w_{n}\|_{\infty}}
\biggl(a_{2}-b_{2}
\dfrac{\sqrt{w_{n}^{2}+4\gamma\delta\tau_{n}}+w_{n}}
{2\delta}
-c_{2}
\dfrac{\sqrt{w_{n}^{2}+4\gamma\delta\tau_{n}}-w_{n}}
{2\gamma}\biggr)=0\quad\mbox{in}\ \Omega,\vspace{1mm} \\
\partial_{\nu}\widetilde{w}_{n}=0
\quad\mbox{on}\ \partial\Omega
\end{cases}
\end{equation}
and 
\begin{equation}\label{tildeint}
\begin{cases}
\displaystyle\int_{\Omega}
\dfrac{\sqrt{w_{n}^{2}+4\gamma\delta\tau_{n}}+w_{n}}
{2\|w_{n}\|_{\infty}}
\biggl(a_{1}-b_{1}
\dfrac{\sqrt{w_{n}^{2}+4\gamma\delta\tau_{n}}+w_{n}}
{2\delta}
-c_{1}
\dfrac{\sqrt{w_{n}^{2}+4\gamma\delta\tau_{n}}-w_{n}}
{2\gamma}\biggr)=0,\vspace{1mm} \\
\displaystyle\int_{\Omega}
\dfrac{\sqrt{w_{n}^{2}+4\gamma\delta\tau_{n}}-w_{n}}
{2\|w_{n}\|_{\infty}}
\biggl(a_{2}-b_{2}
\dfrac{\sqrt{w_{n}^{2}+4\gamma\delta\tau_{n}}+w_{n}}
{2\delta}
-c_{2}
\dfrac{\sqrt{w_{n}^{2}+4\gamma\delta\tau_{n}}-w_{n}}
{2\gamma}\biggr)=0.
\end{cases}
\end{equation}
By applying the elliptic regularity theory to \eqref{tildeell},
we find a function $\widetilde{w}\in W^{2,p}(\Omega )$ for any $p>1$
such that
$$
\lim_{n\to\infty}\widetilde{w}_{n}=
\widetilde{w}\quad\mbox{weakly in $W^{2,p}(\Omega)$ and
strongly in $C^{1}(\overline{\Omega})$}
$$
by passing to a subsequence.
Hence it follows that $\|\widetilde{w}\|_{\infty}=1$.
Then we set $n\to\infty$ in \eqref{tildeint} 
to know
$$
\int_{\Omega}
\widetilde{w}_{+}=
\int_{\Omega}
\widetilde{w}_{-}=0,
$$
which leads to $\widetilde{w}\equiv 0$.
However, this contradicts
$\|\widetilde{w}\|_{\infty}=1$.

Suppose that $\limsup_{n\to\infty}\|w_{n}\|_{C^{1}(\overline{\Omega })}>0$
and $\tau_{n}\to 0$. 
Similarly, a usual compactness argument 
applying the elliptic regularity theory
to \eqref{ISS} ensures a function $w_{\infty}\in W^{2,p}(\Omega )$
for any $p>1$ such that
$\lim_{n\to\infty}w_{n}=w_{\infty}$ in $C^{1}(\overline{\Omega })$,
$\|w_{\infty}\|_{C^{1}(\overline{\Omega })}>0$,
and $w_{\infty}$ is a weak solution of \eqref{CS}
with $d=d_{\infty}$.
By virtue of Proposition \ref{CSprop},
we see that $w_{\infty}=\delta a_{1}/b_{1}$ or
$w_{\infty}=-\gamma a_{2}/c_{2}$ in $\Omega$.
Suppose that $w_{\infty}=\delta a_{1}/b_{1}$ in $\Omega$.
It follows that 
\begin{equation}\label{unif}
\lim_{n\to\infty}(u_{n}, v_{n})=
\biggl(\dfrac{a_{1}}{b_{1}}, 0\biggr)
\quad\mbox{uniformly in}\ \overline{\Omega},
\end{equation}
where $(u_{n}, v_{n})$ is defined by \eqref{unvn}.
From \eqref{intg}, we observe that
\begin{equation}\label{intg2}
\int_{\Omega}v_{n}(a_{2}-b_{2}u_{n}-c_{2}v_{n})=0
\quad\mbox{for any}\ n\in\mathbb{N}.
\end{equation}
Owing to the assumption $A\neq B$, we know from \eqref{unif}
that
$$
a_{2}-b_{2}u_{n}-c_{2}v_{n}>0
\quad\mbox{or}\quad
a_{2}-b_{2}u_{n}-c_{2}v_{n}<0
\quad\mbox{in}\ \Omega
$$
if $n$ is sufficiently large.
This obviously contradicts \eqref{intg2} because
$v_{n}>0$ in $\Omega$ for any $n\in\mathbb{N}$.
By a similar way, we can see that
$w_{\infty}=-\gamma a_{2}/c_{2}$ is also impossible
by the assumption $A\neq C$.
Consequently, we obtain the required assertion in Lemma \ref{bddlem}
by the contradiction argument.
\end{proof}
Therefore,
under assumptions in Lemma \ref{bddlem},
the compact nonlinear map
$F(d,\,\sbt\,,\,\sbt\,)\,:\,X\to X$
has no fixed point on $\partial S_{\eta, M}$
for any $d\ge\varepsilon$.
Hence 
the homotopy invariance of the Leray-Schauder degree
implies that
\begin{equation}\label{hom}
\mbox{deg}\,(I-F(d,\,\sbt\,,\,\sbt\,),S_{\eta,M},0)
\ \mbox{is constant for any}\ 
d\ge\varepsilon.
\end{equation}
Here we recall Lemma \ref{nonexlem} to note that all
fixed points of
$F(d,\,\sbt\,,\,\sbt\,)$
contained in $S_{\eta,M}$ are restricted
to constant solutions of \eqref{ISS} if $d>\overline{d}$.
Then, in the weak or strong competition case 
when such a constant solution is uniquely determined
by $(w^{*}, \tau^{*})$ as in \eqref{wsdef}.
The following lemma asserts that the index of
$I-F(d,\,\sbt\,,\,\sbt\,)$ at $(w^{*},\tau^{*})$
changes infinitely many times as $d\searrow 0$ 
provided $D(a_{i}, b_{i}, c_{i}, \gamma )>0$
(see \eqref{Ddef} for the definition of $D(a_{i}, b_{i}, c_{i}, \gamma )>0$).
\begin{lem}\label{indexlem}
Assume the weak competition $C<A<B$
or the strong competition $B<A<C$.
Suppose further that
$D(a_{i}, b_{i}, c_{i}, \gamma)>0$.
Then
there exists a sequence
$\{d^{(j)}\}^{\infty}_{j=1}$ with 
$$0\leftarrow\cdots\le d^{(j+1)}\le d^{(j)}\le\cdots
\le d^{(2)}\le d^{(1)}\le\overline{d}$$
such that if $C<A<B$, then
$$
{\rm ind}\,(I-F(d,\,\sbt\,,\,\sbt\,), (w^{*}, \tau^{*}))=
\begin{cases}
1\quad &\mbox{for}\ d\in (d^{(j+1)}, d^{(j)})\ \mbox{and $j$ is even},\\
-1\quad &\mbox{for}\ d\in (d^{(j+1)}, d^{(j)})\ \mbox{and $j$ is odd},
\end{cases}
$$
whereas, if $B<A<C$,
then 
$$
{\rm ind}\,(I-F(d,\,\sbt\,,\,\sbt\,), (w^{*}, \tau^{*}))=
\begin{cases}
1\quad &\mbox{for}\ d\in (d^{(j+1)}, d^{(j)})\ \mbox{and $j$ is odd},\\
-1\quad &\mbox{for}\ d\in (d^{(j+1)}, d^{(j)})\ \mbox{and $j$ is even}.
\end{cases}
$$
\end{lem}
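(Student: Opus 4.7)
The plan is to invoke the Leray--Schauder formula
$${\rm ind}\,(I - F(d,\,\sbt\,,\,\sbt\,), (w^{*}, \tau^{*})) = (-1)^{m(d)},$$
where $m(d)$ denotes the sum of algebraic multiplicities of real eigenvalues of $\mathcal{L}(d) := DF(d, w^{*}, \tau^{*})$ strictly greater than $1$ (with the understanding that $1 \notin \sigma(\mathcal{L}(d))$ off the critical sequence $\{d^{(j)}\}$). Differentiating \eqref{Fdef} yields
$$\mathcal{L}(d)(\phi, \sigma) = \left( (I-\Delta)^{-1}\bigl[\phi + d^{-1}(H_{w}\phi + H_{\tau}\sigma)\bigr],\ \dfrac{G'(u^{*})}{c_{1}|\Omega|}\bigl(\alpha_{1}\textstyle\int_{\Omega}\phi + \alpha_{2}|\Omega|\sigma\bigr) \right),$$
where $G(u) := u(a_{1} - b_{1}u)$, $H(w,\tau) := f(u,\tau/u) - \gamma g(u,\tau/u)$ composed with $u = u(w,\tau)$, $\alpha_{1} := u_{w}(w^{*},\tau^{*})$, $\alpha_{2} := u_{\tau}(w^{*},\tau^{*})$, and all coefficients evaluated at $(w^{*},\tau^{*})$. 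Expanding $\phi = \sum_{j\ge 0} c_{j}\Phi_{j}$ diagonalizes $\mathcal{L}(d)$: for each $j \ge 1$ the $c_{j}$-component decouples from the $\sigma$ variable (since $\int_{\Omega}\Phi_{j} = 0$) and yields the simple eigenvalue $\mu_{j}(d) = (d + H_{w})/(d(1 + \lambda_{j}))$, while the pair $(c_{0},\sigma)$ is governed by an explicit $2 \times 2$ matrix $M_{0}(d)$.

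Since $\mu_{j}(d) > 1$ iff $d < H_{w}/\lambda_{j}$, setting $d^{(j)} := H_{w}/\lambda_{j}$ produces a non-increasing sequence accumulating at $0$ provided $H_{w} > 0$ (repetitions reflect the multiplicities of the $\lambda_{j}$, as allowed by the weak inequalities in the statement). A chain-rule computation $H_{w} = h_{u}(u^{*},\tau^{*}) \cdot u_{w}(w^{*},\tau^{*})$, with $h$ as in \eqref{hdef}, combined with the identities $a_{i} = b_{i}u^{*} + c_{i}v^{*}$ and $u_{w}(w^{*},\tau^{*}) = u^{*}/(\delta u^{*} + \gamma v^{*}) > 0$, reduces the sign of $H_{w}$ to that of
$$-b_{1}(u^{*})^{2} + (c_{1} + \gamma b_{2})u^{*}v^{*} - \gamma c_{2}(v^{*})^{2}.$$
Substituting the explicit formula \eqref{const} and factoring out the common positive denominator $(b_{1}c_{2} - b_{2}c_{1})^{2}$ should identify this polynomial with a positive scalar multiple of $D(a_{i}, b_{i}, c_{i}, \gamma)$, so that $H_{w} > 0$ in both competition regimes.

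The remaining work is to control $M_{0}(d)$ and fix the baseline parity. A direct expansion gives $\det(M_{0}(d) - I) = C_{0}/d$ for a constant $C_{0}$ depending only on $(u^{*}, v^{*}, a_{i}, b_{i}, c_{i}, \gamma, \delta)$, and I would show $C_{0} \ne 0$ in both competition regimes, ensuring that the $(c_{0},\sigma)$ block never hits the eigenvalue $1$ and contributes a $d$-independent integer $m_{0}$ to $m(d)$. A continuity argument (letting $d \to \infty$, so that $\mathcal{L}(d)$ reduces to a compact rank-one perturbation of $(I-\Delta)^{-1}$) then pins down $m_{0}$: in the weak competition case $C < A < B$, where $(u^{*},v^{*})$ is stable for the spatially homogeneous kinetics, the baseline gives $m_{0} = 0$ and initial index $+1$; in the strong competition case $B < A < C$, the instability of $(u^{*}, v^{*})$ forces $m_{0} = 1$ and initial index $-1$. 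As $d$ decreases through each $d^{(j)}$ the simple eigenvalue $\mu_{j}(d)$ crosses $1$ from below, toggling the index by $-1$; grouping coincident critical values absorbs the repeated eigenvalues of $-\Delta$, producing the alternating sequences claimed.

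The principal obstacle will be the two algebraic identifications: matching the polynomial $-b_{1}(u^{*})^{2} + (c_{1} + \gamma b_{2})u^{*}v^{*} - \gamma c_{2}(v^{*})^{2}$ with a positive multiple of $D(a_{i}, b_{i}, c_{i}, \gamma)$ after inserting \eqref{const}, and establishing the correct parity of the zero-mode baseline $m_{0}$ through the fine structure of $M_{0}(d)$ in the two competition regimes. Both computations are purely polynomial but require careful bookkeeping under the constraints on $(A, B, C)$, and it is the interplay of the two regimes that is responsible for the opposite alternation patterns of the index in the two cases.
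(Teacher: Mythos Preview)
Your approach is essentially the paper's: Fourier-decompose the linearized eigenvalue problem, observe that the modes $j\ge 1$ decouple from the $\tau$-component and yield simple eigenvalues crossing the threshold exactly at $d^{(j)}=H_w/\lambda_j$, identify $H_w>0$ with $D(a_i,b_i,c_i,\gamma)>0$ via \eqref{const}, and treat the constant mode by a $2\times 2$ block. The paper phrases everything as counting \emph{negative} eigenvalues of $I-L(d)$ rather than eigenvalues of $L(d)$ above $1$, but that is only a change of sign convention.

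One point deserves sharpening. Your claim that the zero-mode block contributes a \emph{$d$-independent} integer $m_0$ is stronger than required and not obviously true: the two eigenvalues of $M_0(d)$ may pass between real and complex as $d$ varies, so the count itself could jump by $2$. What is $d$-independent is only the \emph{parity} of $m_0$, and this is exactly what the sign of $\det(M_0(d)-I)=C_0/d$ delivers: $C_0>0$ forces both eigenvalues on the same side of $1$ (or complex conjugate), hence even contribution; $C_0<0$ forces one real eigenvalue on each side, hence odd contribution. The paper carries this out directly, computing the determinant of the zero-mode block as a positive multiple of $(b_1c_2-b_2c_1)/d$, so that the parity is even in the weak case and odd in the strong case. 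This explicit calculation renders your continuity argument at $d\to\infty$ unnecessary---which is fortunate, since in that limit one eigenvalue of $M_0(d)$ tends to $1$ and the limiting argument would be delicate. Your heuristic link to the ODE stability of $(u^*,v^*)$ is correct in spirit (both come down to the sign of $b_1c_2-b_2c_1$), but the clean route is the determinant computation, and you should replace the continuity step by it.
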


\begin{proof}
For the sake of the calculation of 
$\mbox{ind}\,(I-F(d,\,\sbt\,,\,\sbt\,), (w^{*}, \tau^{*}))$,
the following eigenvalue problem will be considered:
\begin{equation}\label{eg}
(I-L(d))
\biggl[
\begin{array}{c}
\phi\\
\xi
\end{array}
\biggr]
=
\mu
\biggl[
\begin{array}{c}
\phi\\
\xi
\end{array}
\biggr],
\end{equation}
where $L(d)$ is a linear compact operator from $X$ to $X$ 
defined by 
the linearized operator of  
$F(d,\,\sbt\,,\,\sbt\,)$ around $(w^{*},\tau^{*})$
as follows:
$$
L(d):=F_{(w,\tau)}(d,w^{*},\tau^{*}).
$$
It follows from the index formula (see e.g., \cite[Theorem 2.8.1]{Ni})
that
\begin{equation}\label{ind}
{\rm ind}\,(I-F(d,\,\sbt\,,\,\sbt\,), (w^{*}, \tau^{*}))
=(-1)^{\sigma (d)},
\end{equation}
where $\sigma (d)$ is the number of negative eigenvalues
(counting algebraic multiplicity) of \eqref{eg}.
Hereafter, each entry of $L(d)$ will be denoted by
$$
L(d)=
\biggl[
\begin{array}{cc}
L_{11}(d) & L_{12}(d) \\
L_{21}(d) & L_{22}(d)
\end{array}
\biggr]:=\biggl[
\begin{array}{cc}
F^{(1)}_{w}(d,w^{*},\tau^{*}) & F^{(1)}_{\tau}(d, w^{*}, \tau^{*})\\
F^{(2)}_{w}(d,w^{*},\tau^{*}) & F^{(2)}_{\tau}(d, w^{*}, \tau^{*})
\end{array}
\biggr].
$$
It follows from \eqref{Fdef} that
\begin{equation}\label{Lij}
\begin{split}
&L_{11}(d)=\biggl(
1+\dfrac{
f_{u}^{*}u_{w}^{*}+f_{v}^{*}v_{w}^{*}-
\gamma (g_{u}^{*}u_{w}^{*}+g_{v}^{*}v_{w}^{*})}{d}
\biggr)\,
(I-\Delta )^{-1},\\
&L_{12}(d)=
\dfrac{
f_{u}^{*}u_{\tau }^{*}+f_{v}^{*}v_{\tau }^{*}-
\gamma (g_{u}^{*}u_{\tau }^{*}+g_{v}^{*}v_{\tau }^{*})}{d}
\,
(I-\Delta )^{-1},\\
&L_{21}(d)=
\dfrac{(a_{1}-2b_{1}u^{*})u_{w}^{*}}{c_{1}|\Omega|}
\displaystyle\int_{\Omega}\,\sbt\ \ , \quad
L_{22}(d)=
\dfrac{(a_{1}-2b_{1}u^{*})u_{\tau}^{*}}{c_{1}},
\end{split}
\end{equation}
where $f^{*}_{u}:=f_{u}(u^{*},v^{*})$,
$u_{w}^{*}:=u_{w}(w^{*},\tau^{*})$
and other notations are defined by the same manner.
Observing that
$f(u^{*},v^{*})=g(u^{*},v^{*})=0$ and
$\sqrt{w^{2}+4\gamma\delta\tau}=\delta u+\gamma v$,
one can verify 
$$
\biggl[
\begin{array}{cc}
f_{u}^{*} & f_{v}^{*}\\
g_{u}^{*} & g_{v}^{*}
\end{array}
\biggr]
=
-
\biggl[
\begin{array}{cc}
b_{1}u^{*} & c_{1}u^{*}\\
b_{2}v^{*} & c_{2}v^{*}
\end{array}
\biggr],\quad
\biggl[
\begin{array}{cc}
u^{*}_{w} & u^{*}_{\tau}\\
v^{*}_{w} & v^{*}_{\tau}
\end{array}
\biggr]
=
\dfrac{1}{\delta u^{*}+\gamma v^{*}}
\biggl[
\begin{array}{cc}
u^{*} & \gamma \\
-v^{*} & \delta
\end{array}
\biggr]
$$
by a straightforward calculation.
Substituting these expressions into \eqref{Lij}, we get
\begin{equation}\label{Lij2}
\begin{split}
&L_{11}(d)=
\biggl(
1+\dfrac{(\gamma b_{2}+c_{1})\tau^{*}-b_{1}(u^{*})^{2}-\gamma c_{2}(v^{*})^{2}}
{(\delta u^{*}+\gamma v^{*})d}\biggr)\,(I-\Delta )^{-1},\\
&L_{12}(d)=-
\dfrac{(\gamma b_{1}+\delta c_{1})u^{*}-\gamma (\gamma b_{2}+\delta c_{2})v^{*}}{(\delta u^{*}+\gamma v^{*})d}\,(I-\Delta )^{-1},\\
&
L_{21}(d)=
\dfrac{c_{1}\tau^{*}-b_{1}(u^{*})^{2}}
{c_{1}(\delta u^{*}+\gamma v^{*})|\Omega |}\displaystyle
\int_{\Omega}\,\sbt\ \ ,\quad
L_{22}(d)=
\dfrac{\gamma (c_{1}v^{*}-b_{1}u^{*})}{c_{1}(\delta u^{*}+\gamma v^{*})},
\end{split}
\end{equation}
where $a_{1}-2b_{1}u^{*}=c_{1}v^{*}-b_{1}u^{*}$ is used for
expressions of $L_{21}(d)$ and $L_{22}(d)$.
Therefore, we substitute \eqref{Lij2} into \eqref{eg} to see that
the eigenvalue problem \eqref{eg} is equivalent to
\begin{equation}\label{eg2}
\begin{cases}
-(1-\mu)\Delta\phi
-
\dfrac{(\gamma b_{2}+c_{1})\tau^{*}-b_{1}(u^{*})^{2}-\gamma c_{2}(v^{*})^{2}}
{(\delta u^{*}+\gamma v^{*})d}\phi 
\vspace{0.5mm} \\ \hspace{21.2mm}
+
\dfrac{(\gamma b_{1}+\delta c_{1})u^{*}
-\gamma (\gamma b_{2}+\delta c_{2})v^{*}}
{(\delta u^{*}+\gamma v^{*})d}\xi=\mu\phi
\quad &\mbox{in}\ \Omega,\vspace{1mm} \\
\dfrac{b_{1}(u^{*})^{2}-c_{1}\tau^{*}}
{c_{1}(\delta u^{*}+\gamma v^{*})}\overline{\phi}
+\dfrac{(\gamma b_{1}+\delta c_{1})u^{*}}
{c_{1}(\delta u^{*}+\gamma v^{*})}\xi=
\mu\xi,
\vspace{1mm} \\
\partial_{\nu}\phi=0
\quad &\mbox{on}\ \partial\Omega,
\end{cases}
\end{equation}
where $\overline{\phi}:=|\Omega |^{-1}\int_{\Omega}\phi$.
To seek for nontrivial solutions of \eqref{eg2},
we introduce the 
Fourier expansion of $\phi$ as
\begin{equation}\label{Fou}
\phi (x) =\sum^{\infty}_{j=0}
q_{j}\varPhi_{j}(x),
\end{equation}
where $\{\varPhi_{j}\}^{\infty}_{j=0}$ 
is a complete orthonormal basis in $L^{2}(\Omega )$ 
defined by \eqref{Lap}.
Substituting \eqref{Fou} into the first equation of \eqref{eg2}, we obtain
\begin{equation}\label{eg3}
\begin{split}
&\sum\limits^{\infty}_{j=0}
\biggl\{\,
(1-\mu)\lambda_{j}-\mu-
\dfrac{ (\gamma b_{2}+c_{1}) \tau^{*}-b_{1}(u^{*})^{2}
-\gamma c_{2}(v^{*})^{2} }
{ (\delta u^{*}+\gamma v^{*})d } \,\biggr\}
q_{j}\varPhi_{j} \vspace{1mm} \\
&+
\dfrac{ (\gamma b_{1}+\delta c_{1})u^{*}-\gamma 
(\gamma b_{2}+\delta c_{2}) v^{*} }
{ (\delta u^{*}+\gamma v^{*})d }\xi=0\quad\mbox{in}\ \Omega.
\end{split}
\end{equation}
Integrating \eqref{eg3} over $\Omega$, we see
$$
-\dfrac{(\gamma b_{2}+c_{1})\tau^{*}-b_{1}(u^{*})^{2}-\gamma c_{2}(v^{*})^{2}}
{(\delta u^{*}+\gamma v^{*})d}q_{0}|\Omega|^{-1/2}+
\dfrac{(\gamma b_{1}+\delta c_{1})u^{*}-\gamma (\gamma b_{2}+\delta c_{2})v^{*}}{(\delta u^{*}+\gamma v^{*})d}\xi =\mu q_{0}|\Omega|^{-1/2}.
$$
Here it is noted that $\varPhi_{0}=|\Omega|^{-1/2}$ and 
$\overline{\phi}=q_{0}|\Omega |^{-1/2}$.
Together with the second equation of \eqref{eg2},
we obtain the following equation 
which $\xi$ and the constant component of $\phi$ satisfy
\begin{equation}\label{consteg}
M(d; a_{i}, b_{i}, c_{i}, \gamma, \delta)
\left[
\begin{array}{c}
\overline{\phi}\\
\xi
\end{array}
\right]
=
\mu
\left[
\begin{array}{c}
\overline{\phi}\\
\xi
\end{array}
\right],
\end{equation}
where
$$
M(d; a_{i}, b_{i}, c_{i}, \gamma, \delta)=
\dfrac{1}{\delta u^{*}+\gamma v^{*}}
\left[
\begin{array}{ll}
-\frac{(\gamma b_{2}+c_{1})\tau^{*}-b_{1}(u^{*})^{2}-\gamma c_{2}(v^{*})^{2}}
{d}
&
\frac{(\gamma b_{1}+\delta c_{1})u^{*}-\gamma (\gamma b_{2}+\delta c_{2})v^{*}}{d} \vspace{1mm} \\
\frac{b_{1}(u^{*})^{2}-c_{1}\tau^{*}}
{c_{1}}
&
\frac{(\gamma b_{1}+\delta c_{1})u^{*}}
{c_{1}}
\end{array}
\right].
$$
In what follows, we denote by
$\mu_{0}^{-}(d)$ and $\mu_{0}^{+}(d)$
eigenvalues of \eqref{consteg} satisfying
$$\mbox{Re}\,\mu_{0}^{-}(d)\le \mbox{Re}\,\mu_{0}^{+}(d)
\quad\mbox{and}\quad
\mbox{Im}\,\mu_{0}^{-}(d)\le \mbox{Im}\,\mu_{0}^{+}(d).$$
By a straightforward computation.
one can verify that 
$$
\mu_{0}^{-}(d)\mu_{0}^{+}(d)
=|\,M(d; a_{i}, b_{i}, c_{i}, \gamma, \delta)\,|
=\dfrac{\gamma u^{*}v^{*}}{c_{1}(\delta u^{*}+\gamma v^{*})^{2}d}
(b_{1}c_{2}-b_{2}c_{1}).
$$
Hence it follows that
\begin{equation}\label{mu0}
\mu_{0}^{-}(d)\mu_{0}^{+}(d)
\begin{cases}
>0\quad &\mbox{if}\ C<A<B,\\
<0\quad &\mbox{if}\ B<A<C.
\end{cases}
\end{equation}
Here we set
$$
\sigma_{0}(d):=
\mbox{the number of negatives of}\ 
\{\,\mu_{0}^{-}(d), \mu_{0}^{+}(d)\,\}.
$$
By \eqref{mu0},
one can see that, for any $d>0$,
\begin{equation}\label{sigma0}
\sigma_{0}(d)=
\begin{cases}
0\ \ \mbox{or}\ \ 2\quad &\mbox{if}\ C<A<B,\\
1\quad &\mbox{if}\ B<A<C.
\end{cases}
\end{equation}

Taking the $L^{2}(\Omega )$ inner product of 
\eqref{eg3} with $\varPhi_{j}$,
we know that
$$
\biggl\{\,
(1 -\mu)\lambda_{j}-\mu-
\dfrac{ (\gamma b_{2}+c_{1}) \tau^{*}-b_{1}(u^{*})^{2}
-\gamma c_{2}(v^{*})^{2} }
{ (\delta u^{*}+\gamma v^{*})d } \,\biggr\}
q_{j}=0
\quad\mbox{for any}\ j\in\mathbb{N}.
$$
Then the component of $\varPhi_{j}$ of \eqref{eg3} is nontrivial
as $q_{j}\neq 0$ if
\begin{equation}\label{muj}
\mu=\mu_{j}(d):=
\dfrac{1}{\lambda_{j}+1}
\biggl(\lambda_{j}-
\dfrac{(\gamma b_{2}+c_{1})\tau^{*}-b_{1}(u^{*})^{2}-\gamma c_{2}(v^{*})^{2}}
{(\delta u^{*}+\gamma v^{*})d}\biggr).
\end{equation}
Consequently, we deduce that
all eigenvalues of $I-L(d)$ consist of
$$
\{\,\mu_{0}^{-}(d), \mu_{0}^{+}(d), \mu_{1}(d), \mu_{2}(d),
\ldots,\mu_{j}(d),\ldots\,\}.
$$
It follows from \eqref{muj} that,
if $(\gamma b_{2}+c_{1})\tau^{*}-b_{1}(u^{*})^{2}-\gamma c_{2}(v^{*})^{2}
\le 0$,
then $\mu_{j} (d)>0$ for any $d>0$.
On the other hand,
if $(\gamma b_{2}+c_{1})\tau^{*}-b_{1}(u^{*})^{2}-\gamma c_{2}(v^{*})^{2}>0$,
then,
for each $j\in\mathbb{N}$,
$\mu_{j}(d)$ is monotone increasing with respect to 
$d>0$ and satisfies
\begin{equation}\label{mjcut}
\mu_{j}(d)\begin{cases}
<0\quad &\mbox{for}\ d\in (0, d^{(j)}),\\
=0\quad &\mbox{for}\ d=d^{(j)},\\
>0\quad &\mbox{for}\ d\in (d^{(j)},\infty),
\end{cases}
\end{equation}
where
\begin{equation}\label{bif0}
d^{(j)}:=
\dfrac{(\gamma b_{2}+c_{1})\tau^{*}-b_{1}(u^{*})^{2}-\gamma c_{2}(v^{*})^{2}}
{(\delta u^{*}+\gamma v^{*})\lambda_{j}}>0.
\end{equation}
By substituting \eqref{const} into
\eqref{bif0}, one can verify that
\begin{equation}\label{bif}
d^{(j)}=
\dfrac{a_{2}^{\,2}b_{2}c_{2}}
{(b_{2}c_{1}-b_{1}c_{2})^{2}(\delta u^{*}+\gamma v^{*})\lambda_{j}}
D(a_{i}, b_{i}, c_{i},\gamma).
\end{equation}
It should be noted that 
$D(a_{i}, b_{i}, c_{i},\gamma)$ defined by \eqref{Ddef}
is independent of $j\in\mathbb{N}$.
Hence
$d^{(j)}>0$
if and only if
$D(a_{i}, b_{i}, c_{i},\gamma)>0$.
Furthermore,
if $D(a_{i}, b_{i}, c_{i},\gamma)>0$,
then 
\begin{equation}\label{djdeg}
0<d^{(j+1)}\le d^{(j)}\quad
\mbox{for any}\ j\in\mathbb{N}
\quad\mbox{and}\quad 
d^{(j)}=O(\lambda_{j}^{-1})
\quad\mbox{as}\ j\to\infty.
\end{equation}
Together with \eqref{sigma0} and \eqref{mjcut},
we can deduce that
the number $\sigma (d)$ of negative eigenvalues of $L(d)$
satisfies that
$$
\sigma (d)=
\begin{cases}
j\ \ \mbox{or}\ \ j+2\quad &\mbox{if}\ C<A<B
\quad\mbox{and}\quad d\in (d^{(j+1)}, d^{(j)}),\\
j+1\quad &\mbox{if}\ B<A<C
\quad\mbox{and}\quad d\in (d^{(j+1)}, d^{(j)}).
\end{cases}
$$
By virtue of \eqref{ind}, we establish the assertion of
Lemma \ref{indexlem}.
\end{proof}

\begin{proof}[Proof of Theorem \ref{exthm}]
Under assumptions of Theorem \ref{exthm},
we know from Theorem \ref{nonexthm} and Lemma \ref{bddlem} that 
if $d>\overline{d}$, then the following equation
of unknowns $(w,\tau)\in S_{\eta, M}$;
$$(w,\tau)- F(d,w,\tau)=0$$
has the unique solution $(w^{*},\tau^{*})$.
Therefore, the well-known property of the Leray-Schauder degree
implies that if $d>\overline{d}$, then
\begin{equation}\label{degld}
{\rm deg}\,(I-F(d,\,\sbt\,,\,\sbt\,), S_{\eta, M}, 0)=
{\rm ind}\,(I-F(d,\,\sbt\,,\,\sbt\,), (u^{*}, \tau^{*}))
=(-1)^{\sigma (d)}.
\end{equation}
It follows from \eqref{mjcut} and \eqref{djdeg} that
$\mu_{j}(d)>0$
for any $j\in\mathbb{N}$ if $d>d^{(1)}$.
Then, for any $d>d^{(1)}$,
the number $\sigma (d)$ of negative eigenvalues
of $I-L(d)$ is equal to
that of negatives of $\{\,\mu_{0}^{-}(d), \mu_{0}^{+}(d)\,\}$,
namely,
$\sigma (d)=\sigma_{0}(d)$.
It follows from \eqref{sigma0} and
\eqref{degld} that if $d>\overline{d}$, then
\begin{equation}\label{degpro}
{\rm deg}\,(I-F(d,\,\sbt\,,\,\sbt\,), S_{\eta, M}, 0)=
(-1)^{\sigma_{0}(d)}=
\begin{cases}
1\quad &\mbox{if}\ C<A<B,\\
-1\quad &\mbox{if}\ B<A<C.
\end{cases}
\end{equation}
Together with the homotopy invariance \eqref{hom}
of ${\rm deg}\,(I-F(d,\,\sbt\,,\,\sbt\,), S_{\eta, M}, 0)$,
we see that
\eqref{degpro} holds true for any $d\ge\varepsilon$.

Assume that $C<A<B$ in addition to \eqref{bifcond}.
We shall show that
\eqref{ISS} admits at least one nonconstant solution
when $d\in (d^{(j+1)}, d^{(j)})\cap [\varepsilon, \infty)$ and $j$ is odd.
Suppose for contradiction that
there is no nonconstant solution of \eqref{ISS}
for some $\hat{d}\in (d^{(j+1)}, d^{(j)})\cap [\varepsilon, \infty)$ 
with some odd $j$.
Then $(u^{*}, \tau^{*})$ is the only fixed point of 
$F(\hat{d},\,\sbt\,,\,\sbt\,)\,:\,S_{\eta, M}\to X$.
In this situation, we can use the index formula of the
Leray-Schauder degree to see
$$
{\rm deg}\,(I-F(\hat{d},\,\sbt\,,\,\sbt\,), S_{\eta, M}, 0)
=
{\rm ind}\,(I-F(\hat{d},\,\sbt\,,\,\sbt\,), (u^{*}, \tau^{*}))
=-1,
$$
where the last equality comes from Lemma \ref{indexlem}.
Obviously, this contradicts \eqref{degpro}.
By taking account for the arbitrary of $\varepsilon >0$,
we obtain the assertion in case (i) of Theorem \ref{exthm}.
Also in the other case $B<A<C$ with \eqref{bifcond},
a similar argument using Lemma \ref{indexlem}
proves the assertion in case (ii) of Theorem \ref{exthm}.
Then we complete the proof of Theorem \ref{exthm}.
\end{proof}

\section{One-dimensional analysis of the full cross-diffusion limit}
This section focuses on 
the global bifurcation structure of 
nonconstant solutions of \eqref{ISS} 
in the one-dimensional case $\Omega =(0,1)$.
Then we shall consider
the following nonlinear ordinary 
differential equation 
\begin{subequations}\label{IS1}
\begin{equation}\label{IS1-1} 
dw''+h(u,\tau)=0,\quad
u>0
\quad \mbox{in}\ (0,1),\quad \tau >0,
\end{equation}
subject to the homogeneous Neumann boundary condition
\begin{equation}\label{IS1-2}
w'(0)=w'(1)=0
\end{equation}
with the integral constraint
\begin{equation}\label{IS1-3}
\displaystyle\int^{1}_{0}
f\biggl(u,\dfrac{\tau}{u}\biggr)=0,
\end{equation}
\end{subequations}
where $u=u(w,\tau )$ is defined by \eqref{uvdef}
and $h(u,\tau )$ is the nonlinear term given as \eqref{hdef}.
It follows from \eqref{intg} that
\begin{equation}\label{gint2}
\displaystyle\int^{1}_{0}
g\biggl(u,\dfrac{\tau}{u}\biggr)=0.
\end{equation}
In this section, the prime symbol represents the derivative by $x$.
It will be shown that,
for each fixed small $\tau>0$,
there exist three zeros of
$h(u,\tau )$
on $\{u>0\}$:
\begin{lem}\label{hlem}
If $\tau>0$ is sufficiently small,
then
$h(u,\tau )$ 
$(u>0)$ possesses three zeros 
$
0<
z_{1}(\tau )<
z_{2}(\tau )<
z_{3}(\tau )
$
such that
$$h(u,\tau )
\begin{cases}
>0\quad &\mbox{for}\ u\in (0,z_{1}(\tau ))\cup (z_{2}(\tau ), z_{3}(\tau )),\\
<0\quad &\mbox{for}\ u\in (z_{1}(\tau ), z_{2}(\tau ))
\cup (z_{3}(\tau ), \infty)
\end{cases}
$$
and
$$
\lim_{\tau\searrow 0}\dfrac{z_{1}(\tau )}{\tau }=\dfrac{c_{2}}{a_{2}},\quad
\lim_{\tau\searrow 0}\dfrac{z_{2}(\tau )}{\sqrt{\tau }}
=\sqrt{\dfrac{\gamma a_{2}}{a_{1}}},\quad
\lim_{\tau\searrow 0}z_{3}(\tau )=\dfrac{a_{1}}{b_{1}}.
$$
\end{lem}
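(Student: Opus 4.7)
The plan is to recast the zero-finding problem for $h(\,\sbt\,,\tau)$ as a zero-finding problem for a polynomial in $u$, and then resolve the zeros that emerge from the degenerate limit $\tau=0$ by means of the implicit function theorem applied in suitably rescaled variables. Multiplying through by $u^2$ yields
$$
P(u,\tau):=u^{2}h(u,\tau)=-b_{1}u^{4}+a_{1}u^{3}+(\gamma b_{2}-c_{1})\tau u^{2}-\gamma a_{2}\tau u+\gamma c_{2}\tau^{2},
$$
a quartic in $u$ whose positive zeros coincide with those of $h(\,\sbt\,,\tau)$. At $\tau=0$ it factors as $P(u,0)=u^{3}(a_{1}-b_{1}u)$, exhibiting a simple zero at $u=a_{1}/b_{1}$ and a triple zero at the origin. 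The task is to understand how these four roots split when $\tau>0$ is switched on.

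The simple root at $u=a_{1}/b_{1}$ is immediate: since $\partial_{u}P(a_{1}/b_{1},0)=-a_{1}^{3}/b_{1}^{2}\neq 0$, the implicit function theorem furnishes a $C^{1}$ branch $\tau\mapsto z_{3}(\tau)$ with $z_{3}(0)=a_{1}/b_{1}$. To resolve the triple root at the origin I would introduce the rescaling $u=\sqrt{\tau}\,s$ and set
$$
Q(s,\tau):=\tau^{-3/2}P(\sqrt{\tau}\,s,\tau)=a_{1}s^{3}-\gamma a_{2}s+\sqrt{\tau}\bigl(-b_{1}s^{4}+(\gamma b_{2}-c_{1})s^{2}+\gamma c_{2}\bigr)+O(\tau),
$$
which extends continuously (in fact $C^{1}$ in the parameter $\sqrt{\tau}$) to $\tau=0$ with $Q(s,0)=s(a_{1}s^{2}-\gamma a_{2})$. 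The limiting polynomial has three simple zeros $s=0,\ \pm\sqrt{\gamma a_{2}/a_{1}}$, so the implicit function theorem produces three branches $s_{-}(\tau),s_{0}(\tau),s_{+}(\tau)$. A short expansion at $s=0$ using $Q(0,\tau)=\gamma c_{2}\sqrt{\tau}$ gives $s_{0}(\tau)=(c_{2}/a_{2})\sqrt{\tau}+O(\tau)$, whence $u=\sqrt{\tau}\,s_{0}(\tau)=(c_{2}/a_{2})\tau+O(\tau^{3/2})$; this is $z_{1}(\tau)$ and yields the desired limit $z_{1}(\tau)/\tau\to c_{2}/a_{2}$. The positive branch $s_{+}$ delivers $z_{2}(\tau)=\sqrt{\gamma a_{2}/a_{1}}\,\sqrt{\tau}+o(\sqrt{\tau})$, while the negative branch $s_{-}$ produces a negative root of $P(\,\sbt\,,\tau)$.

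At this point four simple roots of the quartic $P(\,\sbt\,,\tau)$ have been identified for all sufficiently small $\tau>0$: three positive roots $z_{1}(\tau)<z_{2}(\tau)<z_{3}(\tau)$ (distinct because they live on three different scales $\tau,\sqrt{\tau},1$) and one negative root. Since a quartic has at most four roots counted with multiplicity, these exhaust the zero set of $P(\,\sbt\,,\tau)$, so in particular $z_{1},z_{2},z_{3}$ are the only positive zeros of $h(\,\sbt\,,\tau)$, and each is simple. The sign pattern then follows automatically: $\lim_{u\searrow 0}h(u,\tau)=+\infty$ on account of the dominant term $\gamma c_{2}\tau^{2}/u^{2}$, $\lim_{u\to\infty}h(u,\tau)=-\infty$, and the signs must alternate at each simple zero, forcing the pattern announced in the lemma.

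The main technical point, and the only genuine obstacle, is to ensure that the implicit function theorem can be applied to $Q(s,\tau)$ despite its $\sqrt{\tau}$-dependence; this is routine once one regards $\sqrt{\tau}$ as the parameter rather than $\tau$ itself. One also must verify that no additional positive zeros of $P(\,\sbt\,,\tau)$ can hide outside the IFT neighborhoods — but this is exactly what the degree-four count settles, because the three positive branches together with the negative branch already account for all four roots of the quartic.
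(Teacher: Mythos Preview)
Your argument is correct, and it takes a genuinely different route from the paper's own proof. The paper proceeds by direct limit evaluation on three separate scales: it computes $\lim_{\tau\searrow 0}h(\kappa_{1}\tau,\tau)$, $\lim_{\tau\searrow 0}\tau^{-1/2}h(\kappa_{2}\sqrt{\tau},\tau)$, and $\lim_{\tau\searrow 0}h(u,\tau)$ for fixed $u$, observes that each of these limiting functions changes sign exactly once (at $\kappa_{1}=c_{2}/a_{2}$, $\kappa_{2}=\sqrt{\gamma a_{2}/a_{1}}$, and $u=a_{1}/b_{1}$ respectively), and then invokes the intermediate value theorem together with the quartic bound to conclude. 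Your approach instead performs a single systematic desingularization: you pass to the quartic $P(u,\tau)=u^{2}h(u,\tau)$, keep the simple root at $a_{1}/b_{1}$ by the implicit function theorem, and resolve the triple root at the origin via the blow-up $u=\sqrt{\tau}\,s$, which turns the degenerate picture into a cubic $Q(s,0)$ with three simple roots, each of which again persists by the implicit function theorem in the parameter $\sqrt{\tau}$. The degree-four count then rules out extraneous roots. What your method buys is a cleaner structural picture (smooth branches in $\sqrt{\tau}$, an explicit identification of the fourth, negative root, and a uniform mechanism for all three zeros); what the paper's method buys is that it avoids the implicit function theorem entirely and reads off the asymptotic constants directly from the limits, at the cost of being slightly more ad hoc in its three-scale decomposition.
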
 

\begin{proof}
Obviously, the fundamental theorem of algebra ensures that
$h(u,\tau )$
has at most three zeros on $\{u>0\}$.
By a straightforward calculation, one can verify
$$
\lim_{\tau\searrow 0} h(\kappa_{1}\tau, \tau)=
-\dfrac{\gamma }{\kappa_{1}}\biggl(a_{2}-\dfrac{c_{2}}{\kappa_{1} }\biggr)
\begin{cases}
>0\quad &\mbox{if}\ \kappa_{1}\in (0,c_{2}/a_{2}),\\
=0\quad &\mbox{if}\ \kappa_{1}=c_{2}/a_{2},\\
<0\quad &\mbox{if}\ \kappa_{1}\in (c_{2}/a_{2},\infty)
\end{cases}
$$ and
$$
\lim_{\tau\searrow 0} \frac{h(\kappa_{2}\sqrt{\tau}, \tau)}{\sqrt{\tau }}=
\kappa_{2}a_{1}-\dfrac{\gamma a_{2}}{\kappa_{2}}
\begin{cases}
<0\quad &\mbox{if}\ \kappa_{2}\in (0,\sqrt{\gamma a_{2}/a_{1}}),\\
=0\quad &\mbox{if}\ \kappa_{2}=\sqrt{\gamma a_{2}/a_{1}},\\
>0\quad &\mbox{if}\ \kappa_{2}\in (\sqrt{\gamma a_{2}/a_{1}},\infty)
\end{cases}
$$ and
$$
\lim_{\tau\searrow 0} h(u,\tau )=
u(a_{1}-b_{1}u)
\begin{cases}
>0\quad &\mbox{if}\ u\in (0,a_{1}/b_{1}),\\
=0\quad &\mbox{if}\ u=a_{1}/b_{1},\\
<0\quad &\mbox{if}\ u\in (a_{1}/b_{1},\infty).
\end{cases}
$$ 
Therefore, we obtain the required assertion.
\end{proof}
We introduce the set $\mathcal{T}=\mathcal{T}(a_{i}, b_{i}, c_{i}, \gamma )$
such as
$$
\mathcal{T}:=
\{\,\tau\in (0, \overline{\tau}]\,:\,
h(u,\tau)\ \mbox{has three zeros}\ 
0<z_{1}(\tau)<z_{2}(\tau)<z_{3}(\tau )\,\},
$$
where $\overline{\tau}$ is obtained in Theorem \ref{nonexthm}.
Hence $\mathcal{T}$ is not empty because any small $\tau>0$
belongs to $\mathcal{T}$ by Lemma \ref{hlem}.
Our strategy of analysis of \eqref{IS1} is as follows:
First we obtain solutions of the Neumann problem
\eqref{IS1-1}-\eqref{IS1-2}
without the integral constraint \eqref{IS1-3}.
Next we construct the set of solutions of 
\eqref{IS1} by choosing
functions satisfying 
\eqref{IS1-3} in the set of solutions of
\eqref{IS1-1}-\eqref{IS1-2}.
To this end, we first obtain the
following existence of solutions of \eqref{IS1-1}-\eqref{IS1-2}:
\begin{prop}\label{timemapprop}
Suppose that $\tau\in\mathcal{T}$ and $h_{u}(z_{2}(\tau), \tau )>0$.
For each $j\in\mathbb{N}$,
if 
$$
0<d<\dfrac{h_{u}(z_{2}(\tau ), \tau)}
{(\delta +\frac{\gamma\tau}{z_{2}(\tau )})(j \pi)^{2}},$$
then \eqref{IS1-1}-\eqref{IS1-2}
admits at least two solutions
$w^{+}_{j}(x; d, \tau)$
and
$w^{-}_{j}(x; d, \tau)$
which satisfy
\begin{equation}\label{osc+}
(-1)^{i-1}(w_{j}^{+})'(x; d, \tau )>0\quad\mbox{for any}
\ x\in\biggl(\dfrac{i-1}{j}, \dfrac{i}{j}\biggr),
\quad (i=1,2,\ldots,j)
\end{equation}
and
\begin{equation}\label{osc-}
(-1)^{i-1}(w_{j}^{-})'(x; d, \tau )<0\quad\mbox{for any}
\ x\in\biggl(\dfrac{i-1}{j}, \dfrac{i}{j}\biggr),
\quad (i=1,2,\ldots,j).
\end{equation}
\end{prop}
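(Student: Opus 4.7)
The plan is to recast \eqref{IS1-1}--\eqref{IS1-2} as a phase-plane problem for the autonomous system with energy first integral, and analyse a weighted time-map. Multiplying \eqref{IS1-1} by $w'$ and using $w=\delta u-\gamma\tau/u$, which gives $w'=(\delta+\gamma\tau/u^{2})u'$, one obtains the conservation law
\begin{equation*}
\tfrac{d}{2}(w')^{2}+H(u,\tau)=E,\qquad
H(u,\tau):=\int_{z_{2}(\tau)}^{u}h(s,\tau)\!\left(\delta+\tfrac{\gamma\tau}{s^{2}}\right)ds.
\end{equation*}
From $h(z_{2},\tau)=0$ and the assumption $h_{u}(z_{2}(\tau),\tau)>0$, one has $H'(z_{2},\tau)=0$ and $H''(z_{2},\tau)>0$, so $u=z_{2}(\tau)$ is a strict local minimum of $H(\cdot,\tau)$ and $(w_{2},0)$ with $w_{2}:=\delta z_{2}-\gamma\tau/z_{2}$ is a center of the $(w,w')$-system. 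The sign pattern of $h$ in Lemma \ref{hlem} also yields $H(z_{1},\tau),H(z_{3},\tau)>0$, so this center is surrounded by a one-parameter family of closed orbits indexed by $E\in(0,E_{\mathrm{sep}}(\tau))$, where $E_{\mathrm{sep}}(\tau):=\min\{H(z_{1},\tau),H(z_{3},\tau)\}$.

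For each $E\in(0,E_{\mathrm{sep}})$, let $u_{-}(E;\tau)<z_{2}<u_{+}(E;\tau)$ be the turning points defined by $H(u_{\pm},\tau)=E$. The time for a monotone arc $u(x)$ to run from $u_{-}$ to $u_{+}$ is given by the weighted time-map
\begin{equation*}
T(E;d,\tau)=\sqrt{\tfrac{d}{2}}\int_{u_{-}(E;\tau)}^{u_{+}(E;\tau)}\frac{\delta+\gamma\tau/u^{2}}{\sqrt{E-H(u,\tau)}}\,du.
\end{equation*}
The key step is to establish the two limits
\begin{equation*}
\lim_{E\searrow 0}T(E;d,\tau)=\pi\sqrt{\tfrac{d\,(\delta+\gamma\tau/z_{2}(\tau)^{2})}{h_{u}(z_{2}(\tau),\tau)}},\qquad
\lim_{E\nearrow E_{\mathrm{sep}}}T(E;d,\tau)=+\infty.
\end{equation*}
The first follows from a second-order Taylor expansion $H(u,\tau)=\tfrac{1}{2}H''(z_{2},\tau)(u-z_{2})^{2}+O((u-z_{2})^{3})$ and the substitution $u=z_{2}+r\sin\theta$, recovering the linearised half-period of the SHO around $(w_{2},0)$. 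The second follows from the saddle-type behaviour at $u=z_{1}$ or $u=z_{3}$, where $E-H(u,\tau)$ vanishes quadratically and produces a logarithmic divergence of the integrand. Given continuity of $T(\cdot;d,\tau)$ in $E$, the hypothesis of the proposition---which is precisely $\lim_{E\searrow 0}T(E;d,\tau)<1/j$---together with the intermediate value theorem yields some $E_{j}=E_{j}(d,\tau)\in(0,E_{\mathrm{sep}})$ with $T(E_{j};d,\tau)=1/j$.

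With $E_{j}$ fixed, construct $w_{j}^{+}$ as the solution of \eqref{IS1-1} on $[0,1/j]$ with initial data $w_{j}^{+}(0)=\delta u_{-}(E_{j};\tau)-\gamma\tau/u_{-}(E_{j};\tau)$ and $(w_{j}^{+})'(0)=0$; it is strictly increasing on $[0,1/j]$ and satisfies $(w_{j}^{+})'(1/j)=0$ by the choice of $E_{j}$. Extend $w_{j}^{+}$ to $[0,1]$ by successive reflection $w_{j}^{+}(x):=w_{j}^{+}(2i/j-x)$ on $[i/j,(i+1)/j]$ for $i=1,\ldots,j-1$; autonomy of \eqref{IS1-1} together with $(w_{j}^{+})'(i/j)=0$ guarantees that each reflected piece is a $C^{2}$ solution and that the pieces match smoothly at the nodes, producing a $C^{2}$ solution of \eqref{IS1-1}--\eqref{IS1-2} satisfying \eqref{osc+}. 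The companion $w_{j}^{-}$ is the half-period shift $w_{j}^{-}(x):=w_{j}^{+}(x+1/j)$ (periodically extended), equivalently the solution starting from the other turning point $\delta u_{+}(E_{j};\tau)-\gamma\tau/u_{+}(E_{j};\tau)$, and it fulfils \eqref{osc-}.

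The principal obstacle is the rigorous treatment of the weighted time-map $T(E;d,\tau)$. The Jacobian factor $\delta+\gamma\tau/u^{2}$ prevents direct quotation of classical time-map results (as for $w''+V'(w)=0$), so both the SHO limit as $E\searrow 0$ and the separatrix divergence as $E\nearrow E_{\mathrm{sep}}$ must be established by hand, the former requiring uniform control of the integrable square-root singularities at both turning points and the latter requiring careful estimation near the saddle. Fortunately, the existence of $E_{j}$ requires only continuity of $T$ and these two limiting values; no monotonicity of $T(\cdot;d,\tau)$ is needed at this stage, and its possible failure would only be relevant for a uniqueness statement, which is not claimed here.
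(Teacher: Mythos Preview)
Your proposal is correct and follows essentially the same route as the paper: the same conservation law with the weighted potential $H(u,\tau)=\int_{z_{2}}^{u}h(s,\tau)(\delta+\gamma\tau/s^{2})\,ds$, the same two limits for the half-period (the harmonic-oscillator value at the center and divergence at the separatrix), and the same intermediate-value-theorem and reflection/shift construction of $w_{j}^{\pm}$. The only cosmetic differences are that the paper parametrizes the closed orbits by the left turning point $m\in(z_{1},z_{2})$ (so that your $E$ is $H(m,\tau)$) and explicitly splits into the two cases $H(z_{1},\tau)\lessgtr H(z_{3},\tau)$, whereas your formulation with $E_{\mathrm{sep}}=\min\{H(z_{1},\tau),H(z_{3},\tau)\}$ handles both at once.
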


\begin{proof}
In order to find solutions 
of the Neumann problem \eqref{IS1-1}-\eqref{IS1-2}
by the shooting method,
we consider the associated initial-value problem
\begin{equation}\label{ini}
\begin{cases}
dw''+h(u,\tau )=0,
\quad x>0,\\
w(0)=m>0,\quad
w'(0)=0,
\end{cases}
\end{equation}
where $u$ is defined by \eqref{uvdef}.
In the rising part of the proof,
following the standard shooting method,
we multiply the differential equation of \eqref{ini}
by $w'$ as follows:
\begin{equation}\label{ode}
dw'w''+h(u,\tau)w'=0.
\end{equation}
Noting here $w=\delta u -\gamma\tau/ u$,
we substitute
\begin{equation}\label{wp}
w'=\biggl(\delta +\dfrac{\gamma\tau}{u^{2}}\biggr) u'
\end{equation} 
into the latter $w$ in \eqref{ode} to get
$$
\biggl(
\dfrac{d}{2}w'(x)^{2}+H(u(x),\tau )\biggr)'=0,
$$
where
\begin{equation}\label{Hdef}
H(u,\tau )=\displaystyle\int^{u}_{z_{2}(\tau )}
h(s,\tau)\biggl(
\delta +\dfrac{\gamma \tau}{s^{2}}\biggr)\,{ds}.
\end{equation}
Obviously,
for each fixed $\tau\in\mathcal{T}$,
the function $H(u,\tau )$ 
$(u>0)$ attains local maximums
at $u=z_{1}(\tau ),\,z_{3}(\tau )$ and a local minimum at $u=z_{2}(\tau )$.
Then we obtain
$$
\dfrac{d}{2}w'(x)^{2}+H(u(x),\tau )=
H(m,\tau )
$$
for any $x$ as long as the solution of \eqref{ini} exists.
Hence any solution $w$ with monotone increasing for 
small $x>0$ satisfies
$$
w'(x)=
\sqrt{\dfrac{2}{d}}
\sqrt{H(m,\tau )-H(u,\tau)}
$$
With \eqref{wp}, one can see
$$
u'(x)=
\sqrt{\dfrac{2}{d}}
\dfrac{\sqrt{H(m, \tau)-H(u, \tau)}}
{\delta+\frac{\gamma \tau}{u^{2}}}.
$$
Therefore, derivatives of inverse functions give
\begin{equation}\label{dxdu}
\dfrac{dx}{du}=
\sqrt{\dfrac{d}{2}}
\biggl(
\dfrac
{\delta}
{\sqrt{H(m, \tau)-H(u, \tau)}}
+
\dfrac
{\gamma\tau}
{u^{2}\sqrt{H(m, \tau)-H(u, \tau)}}
\biggr)
\end{equation}
for any $x$
as long as the solution $w(x)$ of \eqref{ini}
fulfills $u'(x)>0$.

If necessary, we denote by $w(x,m)$ and $u(x,m)$
the solution $w$ of \eqref{ini} and $u$ defined
by \eqref{uvdef} in order to specify the dependence on $m$.
By \eqref{wp},
increase/decrease of $w(x,m)$ and $u(x,m)$ matches.

The following shooting argument using the 
$(u,u')$ phase plane
will be divided into two cases (i) and (ii);
$$
{\rm (i)}\
H(z_{1}(\tau ), \tau )\le H(z_{3}(\tau ), \tau );
\quad
{\rm (ii)}\
H(z_{1}(\tau ), \tau )> H(z_{3}(\tau ), \tau ).
$$

In case (i),
for any $m\in (z_{1}(\tau ), z_{2}(\tau ))$,
there exists 
$M(m, \tau )\in (z_{2}(\tau ), z_{3}(\tau ))$ such that
$$
H(m, \tau )=H(M(m, \tau ), \tau )
\quad\mbox{and}\quad
M(m, \tau )\searrow z_{2}(\tau ) 
\ \mbox{as}\ 
m\nearrow z_{2}(\tau ).
$$
Here we set
\begin{equation}\label{Xdef0}
X(m,\tau):=\sup\{\,\widetilde{x}>0
\,:\,u'(x,m)>0\ \mbox{for any}\ x\in (0,\widetilde{x})\,\}
\end{equation}
for $\tau \in \mathcal{T}$.
From \eqref{dxdu},
a standard analysis using the $(u,u')$ phase plane
enables us to see that
$X(m,\tau )$ is well-defined and finite if and only if
$m\in (z_{1}(\tau ), z_{2}(\tau ))$.
In this case,
$u(x,m)$ is monotone increasing for $x\in (0,X(m,\tau ))$
with $u(X(m,\tau ),\tau )=M(m, \tau )$.
Actually,
integrating \eqref{dxdu} by $u$ over $(m, M(m, \tau ))$, we get
\begin{equation}\label{Xdef}
X(m.\tau )=
\sqrt{\dfrac{d}{2}}\,
\{\,\delta I(m,\tau )+\gamma\tau J(m,\tau )\,\}
\end{equation}
for any $(m,\tau )\in (z_{1}(\tau ), z_{2}(\tau ))\times\mathcal{T}$,
where
$$
I(m,\tau ):=
\displaystyle\int^{M(m, \tau )}_{m}
\dfrac{du}{\sqrt{H(m,\tau )-H(u,\tau)}}
$$
and
$$
J(m,\tau ):=
\displaystyle\int^{M(m, \tau )}_{m}
\dfrac{du}{u^{2}\sqrt{H(m,\tau )-H(u,\tau)}}.
$$
By the change of variables 
$u=m+(M(m, \tau )-m)\theta$,
one can see
\begin{equation}\label{I2}
I(m,\tau)=
(M(m, \tau )-m)\displaystyle\int^{1}_{0}
\dfrac{d\theta}
{\sqrt{H(m,\tau )-
H(m+(M(m, \tau )-m)\theta, \tau)}}
\end{equation}
and
\begin{equation}\label{J2}
J(m,\tau)=
(M(m, \tau )-m)\displaystyle\int^{1}_{0}
\dfrac{d\theta}
{\{\,m+(M(m, \tau )-m)\theta\,\}^{2}\sqrt{H(m,\tau )-
H(m+(M(m, \tau )-m)\theta, \tau)}}.
\end{equation}

In order to derive the asymptotic behavior of 
$I(m,\tau)$ and $J(m,\tau )$ as $m\nearrow z_{2}(\tau )$,
we expand $H(m,\tau )$ and $H(m+(M(m, \tau )-m)\theta , \tau )$
into Taylor's series around $z_{2}(\tau )$ as follows:
\begin{equation}
\begin{split}
&H(m,\tau )-H(m+(M(m, \tau )-m)\theta , \tau ) \\
=&
H(m,\tau )-H(z_{2}(\tau ), \tau )
-\{\,H(m+(M(m, \tau )-m)\theta , \tau )-H(z_{2}(\tau ), \tau )\,\}\\
=&
H_{u}(z_{2}(\tau ), \tau )(m-z_{2}(\tau ))+
\dfrac{H_{uu}(z_{2}(\tau ), \tau )}{2}(m-z_{2}(\tau ))^{2}+
o((m-z_{2}(\tau ))^{2})\\
&-H_{u}(z_{2}(\tau ), \tau )
\{\,m+(M(m, \tau )-m)\theta -z_{2}(\tau )\,\}
-\dfrac{H_{uu}(z_{2}(\tau ), \tau )}{2}
\{\,m+(M(m, \tau )-m)\theta -z_{2}(\tau )\,\}^{2}\\
&+o(\{\,m+(M(m, \tau )-m)\theta -z_{2}(\tau )\,\}^{2}).
\end{split}
\nonumber
\end{equation}
Here we recall \eqref{Hdef} to note
$$
H_{u}(z_{2}(\tau ),\tau )=0\quad \mbox{and}\quad
H_{uu}(z_{2}(\tau ), \tau )=h_{u}(z_{2}(\tau ),\tau )\biggl(
\delta +\dfrac{\gamma \tau}{z_{2}(\tau )^{2}}\biggr).
$$
Then it follows that
\begin{equation}\label{h}
\begin{split}
&H(m,\tau )-H(m+(M(m, \tau )-m)\theta , \tau ) \\
=&
\dfrac{h_{u}(z_{2}(\tau ),\tau )}{2}\biggl(
\delta +\dfrac{\gamma \tau}{z_{2}(\tau )^{2}}\biggr)
(M(m, \tau )-m)^{2}
\biggl(
\dfrac{2(z_{2}(\tau )-m)}{M(m, \tau )-m}-\theta\biggr)\theta\\
&
+o((m-z_{2}(\tau ))^{2})
+o((m+(M(m, \tau )-m)\theta -z_{2}(\tau ))^{2})
\end{split}
\end{equation}
as $m\nearrow z_{2}(\tau )$.
Here we shall show 
\begin{equation}\label{lr}
\lim_{m\nearrow z_{2}(\tau )}
\dfrac{2(z_{2}(\tau )-m)}{M(m, \tau )-m}=1.
\end{equation}
Differentiating $H(m,\tau )=H(M(m,\tau ), \tau )$ by $m$, we see
$H_{u}(m,\tau )=
H_{u}(M(m,\tau ), \tau )M_{m}(m,\tau )$
for any $(m,\tau )\in (z_{1}(\tau ), z_{2}(\tau ))\times
\mathcal{T}$.
By H\^opital's rule, it is easy to check that
$$
\lim_{m\nearrow z_{2}(\tau )}
M_{m}(m,\tau )=
\lim_{m\nearrow z_{2}(\tau )}
\dfrac{H_{u}(m,\tau )}
{H_{u}(M(m,\tau ), \tau )}
=
\lim_{m\nearrow z_{2}(\tau )}
\dfrac{h_{u}(m,\tau )}
{h_{u}(M(m,\tau ), \tau )M_{m}(m,\tau )}.
$$
Therefore, we obtain
$$
\lim_{m\nearrow z_{2}(\tau )}M_{m}(m, \tau )^{2}
=
\lim_{m\nearrow z_{2}(\tau )}
\dfrac{h_{u}(m,\tau )}
{h_{u}(M(m,\tau ), \tau )}=1
$$
because $h_{u}(z_{2}(\tau ), \tau )>0$.
By the fact that
$M(m,\tau )$ is monotone decreasing 
for $m\in (z_{1}(\tau ), z_{2}(\tau ))$
with each fixed $\tau\in\mathcal{T}$,
we know that
$
M_{m}(m, \tau )
\to -1
$
as 
$m\nearrow z_{2}(\tau )$.
Therefore, we obtain \eqref{lr} by using H\^opital's rule
as follows:
$$
\lim_{m\nearrow z_{2}(\tau )}
\dfrac{2(z_{2}(\tau )-m)}{M(m, \tau )-m}=
\lim_{m\nearrow z_{2}(\tau )}
\dfrac{-2}{M_{m}(m, \tau )-1}=1.
$$
Substituting \eqref{h} into \eqref{I2}
and \eqref{J2}, and then,
setting $m\nearrow z_{2}(\tau )$,
we know from \eqref{lr} that
$$
\lim_{m\nearrow z_{2}(\tau )}
I(m,\tau )=
\sqrt{
\dfrac{2}{h_{u}(z_{2}(\tau ), \tau )
(\delta +\frac{\gamma \tau }{z_{2}(\tau )^{2}})}}
\displaystyle\int^{1}_{0}
\frac{d\theta}{\sqrt{\theta (1-\theta )}}
=\sqrt{\dfrac{2}{h_{u}(z_{2}(\tau ), \tau )
(\delta +\frac{\gamma \tau }{z_{2}(\tau )^{2}})}}\,\pi
$$
and
$$
\lim_{m\nearrow z_{2}(\tau )}
J(m,\tau )
=\dfrac{1}{z_{2}(\tau )^{2}}\sqrt{\dfrac{2}{h_{u}(z_{2}(\tau ), \tau )
(\delta +\frac{\gamma \tau }{z_{2}(\tau )^{2}})}}\,\pi.
$$
Consequently, 
we set $m\nearrow z_{2}(\tau )$ in \eqref{Xdef} to get
\begin{equation}  
\lim_{m\nearrow z_{2}(\tau )}X(m,\tau )
=\sqrt{
\dfrac{d(\delta +\frac{\gamma \tau}{z_{2}(\tau )^{2}})}
{h_{u}(z_{2}(\tau ), \tau )}}\,\pi.
\end{equation}

In case (i),
for the derivation of the asymptotic behavior of $X(m,\tau )$ as
$m\searrow z_{1}(\tau )$,
we use the Taylor expansion of
$\theta\mapsto H(m+(M(m,\tau)-m)\theta, \tau)$
around $\theta =0$ to observe
\begin{equation}
\begin{split}
&H(m,\tau )-H(m+(M(m, \tau)-m)\theta, \tau )\\
=&
-h(m,\tau )\biggl(\delta+\dfrac{\gamma\tau}{m^{2}}\biggr)
(M(m,\tau )-m)\theta\\
&-
\dfrac{1}{2}
\biggl\{\,
h_{u}(m,\tau)\biggl(\delta+\dfrac{\gamma\tau}{m^{2}}\biggr)
-h(m,\tau)\dfrac{2\gamma\tau}{m^{3}}\,\biggr\}
(M(m,\tau )-m)^{2}\theta^{2}
+o((M(m,\tau )-m)^{2}\theta^{2})
\end{split}
\nonumber
\end{equation}
for each $m\in (z_{1}(\tau ), z_{2}(\tau ))$
as $\theta\searrow 0$.
Here we assume
$
h_{u}(z_{1}(\tau ),\tau )<0$.
Then for any $\tau\in\mathcal{T}$,
there exist
$k_{i}(m,\tau )>0$ $(i=1,2)$
with
$\lim_{m\searrow z_{1}(\tau )}k_{1}(m,\tau )=0$
and
$\lim_{m\searrow z_{1}(\tau )}k_{2}(m,\tau )>0$
such that
if $\theta>0$ is sufficiently small and
$m\in (z_{1}(\tau ), z_{2}(\tau ))$
is sufficiently close to $z_{1}(\tau )$,
then
$$H(m,\tau )-H(m+(M(m, \tau)-m)\theta, \tau )
\le k_{1}(m,\tau )\theta +k_{2}(m,\tau )\theta^{2}.$$
Therefore, there exists a small $\varepsilon >0$ such that
$$
I(m,\tau )\ge 
(M(m, \tau)-m)\displaystyle\int^{\varepsilon}_{0}
\dfrac{d\theta}
{\sqrt{k_{1}(m,\tau )\theta+
k_{2}(m,\tau )\theta^{2}}}
$$
and
$$
J(m,\tau )\ge 
\dfrac{M(m, \tau )-m}{z_{2}(\tau )^{2}}\displaystyle\int^{\varepsilon}_{0}
\dfrac{d\theta}
{\sqrt{k_{1}(m,\tau )\theta+
k_{2}(m,\tau )\theta^{2}}}
$$
if $m\in (z_{1}(\tau ), z_{2}(\tau ))$ is sufficiently close to $z_{1}(\tau )$.
Consequently, we know from \eqref{Xdef} that,
in case (i),
\begin{equation}\label{z1infty}
\lim_{m\searrow z_{1}(\tau )}X(m,\tau )=\infty.
\end{equation}
It is easy to check that \eqref{z1infty} holds true even when
$h_{u}(z_{1}(\tau ), \tau )=0$ by observing the higher order
expansion of $\theta\mapsto
H(m+(M(m,\tau)-m)\theta, \tau )$.

Next we consider the other case (ii);
$H(z_{1}(\tau ),\tau )>H(z_{3}(\tau ), \tau ))$.
For such $\tau\in\mathcal{T}$,
there exists a unique $\underline{m}(\tau )\in (z_{1}(\tau ), z_{2}(\tau ))$
such that
$H(\underline{m}(\tau ), \tau )=H(z_{3}(\tau ), \tau )$.
In case (ii),
$X(m,\tau )$ in \eqref{Xdef0} is well-defined and finite
if and only if $m\in (\underline{m}(\tau ), z_{2}(\tau ))$,
and moreover, 
it is represented as
\eqref{Xdef}.
By a similar manner as in case (i), one can verify that
$$
\lim_{m\searrow \underline{m}(\tau )}X(m,\tau )=\infty
\quad\mbox{and}\quad
\lim_{m\nearrow z_{2}(\tau )}X(m,\tau )=
\sqrt{
\dfrac{d(\delta +\frac{\gamma \tau}{z_{2}(\tau )^{2}})}
{h_{u}(z_{2}(\tau ), \tau )}}\,\pi
$$
in case (ii).

Therefore,
in both cases (i) and (ii),
the algebraic equation $X(m,\tau)=1/j$ admits at least one root
$m=m_{j}(\tau )\in 
(z_{1}(\tau ), z_{2}(\tau ))$ or
$(\underline{m}(\tau ), z_{2}(\tau ))$
provided
$$
\lim_{m\nearrow z_{2}(\tau )}X(m,\tau )=
\sqrt{
\dfrac{d(\delta +\frac{\gamma \tau}{z_{2}(\tau )^{2}})}
{h_{u}(z_{2}(\tau ), \tau )}}\,\pi<\dfrac{1}{j},
\quad\mbox{that is,}\quad
0<d<\dfrac{h_{u}(z_{2}(\tau ), \tau)}
{(\delta +\frac{\gamma\tau}{z_{2}(\tau )^{2}})(j \pi)^{2}}.
$$
For such $d$,
the solution $w(x,m_{j}(\tau ))$ of 
the initial-value problem \eqref{ini}
satisfies 
$w'(x, m_{j}(\tau ))>0$ for $x\in (0,1/j)$;
$w'(1/j, m_{j}(\tau ))=0$;
$w'(x, m_{j}(\tau ))<0$ for $x\in (1/j, 2/j)$;
$w(2/j, m_{j}(\tau ))=w(0, m_{j}(\tau))$;
$w'(2/j, m_{j}(\tau ))=0$,
and moreover,
oscillates periodically for $x>0$.
Then 
$$w^{+}_{j}(x;d, \tau):=w(x,m_{j}(\tau ))$$
becomes a solution of the Neumann problem 
\eqref{IS1-1}-\eqref{IS1-2} and it satisfies \eqref{osc+}.
Furthermore,
$w^{-}_{j}(x;d, \tau):=w(x+1/j,m_{j}(\tau ))$
is also a solution of \eqref{IS1-1}-\eqref{IS1-2} 
and satisfies \eqref{osc-}.
The proof of Proposition \ref{timemapprop} is complete.
\end{proof}

\begin{rem}
It is possible to prove that if
$h_{u}(z_{2}(\tau ), \tau )=0$,
then $\lim_{m\nearrow z_{2}(\tau )}X(m,\tau )=\infty$.
Together with $\lim_{m\searrow z_{1}(\tau )}X(m,\tau )=\infty$
or $\lim_{m\searrow \underline{m}(\tau )}X(m,\tau )=\infty$,
for each $j\in\mathbb{N}$,
there exists $\hat{d}^{(j)}>0$ such that
if $d\in (0, \hat{d}^{(j)})$, then
\eqref{IS1-1}-\eqref{IS1-2} has at least four solutions
$\overline{w}^{+}_{j}(x;d,\tau)$,
$\underline{w}^{+}_{j}(x;d,\tau)$,
$\overline{w}^{-}_{j}(x;d,\tau)$ and
$\underline{w}^{-}_{j}(x;d,\tau)$,
where
$\overline{w}^{+}_{j}(x;d,\tau)$ and
$\underline{w}^{+}_{j}(x;d,\tau)$
satisfy \eqref{osc+};
$\overline{w}^{-}_{j}(x;d,\tau)$ and
$\underline{w}^{-}_{j}(x;d,\tau)$
satisfy \eqref{osc-}.
\end{rem}

\begin{rem}\label{taubarrem}
If $\tau\not\in\mathcal{T}$, then
\eqref{IS1} does not admit any nonconstant solution.
Actually,
a standard phase plane analysis implies that
any solution of \eqref{ini} cannot satisfy
$w'(1)=0$ in case the number of zeros of
$h(u,\tau)$ is less than three.
\end{rem}

Concerning the Neumann problem \eqref{IS1-1}-\eqref{IS1-2}
(without \eqref{IS1-3}),
we discuss the singular limit as $d\searrow 0$ 
of solutions 
obtained in Proposition \ref{timemapprop}.
It is possible to verify that,
for $n\ge 2$,
each $w^{\pm}_{n}(x;d, \tau)$ can be constructed by
connecting suitable rescaled or reflected pieces of 
$w^{+}_{1}(x;d, \tau)$.
Then we study the singular limit as $d\searrow 0$ of 
$w_{1}^{+}(x;d, \tau)$. 
Hereafter we use another notation 
$\widetilde{h}(w,\tau )$
of the nonlinear term of \eqref{IS1-1}
by substituting $u(w,\tau )$ defined by \eqref{uvdef} 
into $h(u,\tau )$ such as 
$
\widetilde{h}(w,\tau)
:=h(u(w,\tau ), \tau )$.
Then \eqref{IS1-1}-\eqref{IS1-2} can be represented as
\begin{equation}\label{IS1p}
\begin{cases}
dw''+\widetilde{h}(w,\tau )=0\quad\mbox{in}\ (0,1),\quad\tau >0,\\
w'(0)=w'(1)=0.
\end{cases}
\end{equation}
In view of Lemma \ref{hlem}, we define $\xi_{i}(\tau )$
$(i=1, 2, 3)$ by
$$
z_{i}(\tau )=u(\xi_{i}(\tau ), \tau),\quad
\mbox{conversely},\quad
\xi_{i}(\tau ):= \delta z_{i}(\tau )-\dfrac{\gamma\tau}{z_{i}(\tau )}.
$$
From the monotone increasing relation of $w\mapsto u(w,\tau )$
by \eqref{wp},
we know from Lemma \ref{hlem} that if $\tau\in\mathcal{T}$,
then
$$
\widetilde{h}(w,\tau )
\begin{cases}
>0\quad &\mbox{for}\ w\in (-\infty,\xi_{1}(\tau ))
\cup (\xi_{2}(\tau ), \xi_{3}(\tau )),\\
<0\quad &\mbox{for}\ w\in (\xi_{1}(\tau ), \xi_{2}(\tau ))
\cup (\xi_{3}(\tau ), \infty).
\end{cases}
$$
For such a bistable nonlinear term,
we set
\begin{equation}\label{Htdef}
\widetilde{H}(w,\tau ):=
\int^{w}_{\xi_{2}(\tau )}\widetilde{h}(s,\tau )\,ds.
\end{equation}
Concerning the Neumann problem of ordinary 
differential equations with a class of bistable
nonlinearities such as $\widetilde{h}(w,\tau )$,
it is well known that the singular limit of solutions
as $d\searrow 0$ crucially depends on the sign
of $\widetilde{H}(\xi_{3}(\tau ), \tau )-
\widetilde{H}(\xi_{1}(\tau ), \tau )$ as follows
(see e.g., 
Nishiura \cite[Lemma 3.1]{NiS},
Shi \cite[Proposition 2.6]{Sh}):
\begin{enumerate}[(i)]
\item
If $\widetilde{H}(\xi_{1}(\tau ), \tau )<\widetilde{H}(\xi_{3}(\tau ), \tau )$, then
$$
\lim_{d\searrow 0}w_{1}^{+}(x;d, \tau)=
\begin{cases}
\xi_{1}(\tau )\quad &\mbox{for}\ x\in [0,1),\\
\widetilde{\eta }(\tau ) &\mbox{for}\ x=1,
\end{cases}
$$
where $\widetilde{\eta}(\tau )\in (\xi_{2}(\tau ), \xi_{3}(\tau ))$
is defined by 
$\int^{\widetilde{\eta}(\tau )}_{\xi_{1}(\tau )}\widetilde{h}(s,\tau )ds=0$.
\item
If $\widetilde{H}(\xi_{1}(\tau ), \tau)=
\widetilde{H}(\xi_{3}(\tau ), \tau )$, then
\begin{equation}
\lim_{d\searrow 0}w_{1}^{+}(x;d, \tau)=
\begin{cases}
\xi_{1}(\tau )\quad &\mbox{for}\ x\in [0,1/2),\\
(\xi_{1}(\tau )+\xi_{3}(\tau ))/2\quad &\mbox{for}\ x=1/2,\\
\xi_{3}(\tau ) &\mbox{for}\ x\in (1/2, 1].
\end{cases}
\nonumber
\end{equation}
\item
If $\widetilde{H}(\xi_{1}(\tau ), \tau )>\widetilde{H}(\xi_{3}(\tau ), \tau )$, then
\begin{equation}\label{iii}
\lim_{d\searrow 0}w_{1}^{+}(x;d, \tau)=
\begin{cases}
\widetilde{\zeta }(\tau ) &\mbox{for}\ x=0,\\
\xi_{3}(\tau )\quad &\mbox{for}\ x\in (0,1],
\end{cases}
\nonumber
\end{equation}
where $\widetilde{\zeta} (\tau )\in (\xi_{1}(\tau ),\xi_{2}(\tau ))$ is defined by 
$\int^{\xi_{3}(\tau )}_{\widetilde{\zeta}(\tau )}\widetilde{h}(s,\tau )ds=0$.
\end{enumerate}
Here we note that
the change of variables
$w=\delta u-\gamma \tau/u$
links \eqref{Hdef} with \eqref{Htdef}
in the sense of
$
\widetilde{H}(w,\tau )=H(u,\tau)$.
Then by the change of variables,
the above (i)-(iii) give the following singular limiting behavior of 
\begin{equation}\label{u+def}
u^{+}_{1}(x; d, \tau):=
\dfrac{\sqrt{w^{+}_{1}(x;d, \tau)^{2}+4\gamma\delta\tau}+w^{+}_{1}(x; d, \tau)}{2\delta }
\end{equation}
as $d\searrow 0$:
\begin{lem}\label{singlem}
Suppose that $\tau\in\mathcal{T}$.
The function $u^{+}_{1}(x; d, \tau)$
satisfies either one of the following (i)-(iii)
depending on the sign of 
$H(z_{3}(\tau ),\tau)-H_{1}(z_{1}(\tau ), \tau )$:
\begin{enumerate}[{\rm (i)}]
\item
If $H(z_{1}(\tau ), \tau )<H(z_{3}(\tau ), \tau )$, then
\begin{equation}
\lim_{d\searrow 0}u_{1}^{+}(x;d, \tau)=
\begin{cases}
z_{1}(\tau )\quad &\mbox{for}\ x\in [0,1),\\
\eta (\tau ) &\mbox{for}\ x=1,
\end{cases}
\nonumber
\end{equation}
where $\eta(\tau )\in (z_{2}(\tau ), z_{3}(\tau ))$
is defined by 
$\int^{\eta(\tau )}_{z_{1}(\tau )}h(s,\tau )
(\delta +\frac{\gamma \tau}{s^{2}})ds=0$.
\item
If $H(z_{1}(\tau ), \tau)=H(z_{3}(\tau ), \tau )$, then
\begin{equation}
\lim_{d\searrow 0}u_{1}^{+}(x;d, \tau)=
\begin{cases}
z_{1}(\tau )\quad &\mbox{for}\ x\in [0,1/2),\\
(z_{1}(\tau )+z_{3}(\tau ))/2\quad &\mbox{for}\ x=1/2,\\
z_{3}(\tau ) &\mbox{for}\ x\in (1/2, 1].
\end{cases}
\nonumber
\end{equation}
\item
If $H(z_{1}(\tau ), \tau )>H(z_{3}(\tau ), \tau )$, then
\begin{equation}
\lim_{d\searrow 0}u_{1}^{+}(x;d, \tau)=
\begin{cases}
\zeta (\tau ) &\mbox{for}\ x=0,\\
z_{3}(\tau )\quad &\mbox{for}\ x\in (0,1],
\end{cases}
\nonumber
\end{equation}
where $\zeta (\tau )\in (z_{1}(\tau ),z_{2}(\tau ))$ is defined by 
$\int^{z_{3}(\tau )}_{\zeta(\tau )}h(s,\tau )
(\delta +\frac{\gamma \tau}{s^{2}})ds=0$.
\end{enumerate}
\end{lem}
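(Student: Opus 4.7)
The plan is to transfer the known singular-perturbation results for the bistable Neumann problem \eqref{IS1p} in the $w$-variable (as given in (i)--(iii) above the statement of the lemma) to statements about $u_1^+(x;d,\tau) = u(w_1^+(x;d,\tau),\tau)$ via the change of variables $s = \delta r - \gamma\tau/r$ that relates $\widetilde{h}$ and $h$. Concretely, I will verify a clean correspondence $\widetilde{H}(w,\tau) = H(u(w,\tau),\tau)$ of the potentials, and then carry each of the three conclusions through the continuous monotone map $w \mapsto u(w,\tau)$.

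First I would establish the identity $\widetilde{H}(w,\tau) = H(u(w,\tau),\tau)$. Starting from \eqref{Htdef} and substituting $s = \delta r - \gamma\tau/r$, so that $ds = (\delta + \gamma\tau/r^2)\,dr$ and $\widetilde{h}(s,\tau) = h(r,\tau)$, one computes
\begin{equation*}
\widetilde{H}(w,\tau) = \int_{\xi_2(\tau)}^{w} \widetilde{h}(s,\tau)\,ds = \int_{z_2(\tau)}^{u(w,\tau)} h(r,\tau)\Bigl(\delta + \frac{\gamma\tau}{r^2}\Bigr)\,dr = H(u(w,\tau),\tau),
\end{equation*}
using $z_i(\tau) = u(\xi_i(\tau),\tau)$ for $i=1,2,3$. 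In particular $\widetilde{H}(\xi_i(\tau),\tau) = H(z_i(\tau),\tau)$, so the trichotomy $\widetilde{H}(\xi_1,\tau) \lessgtr \widetilde{H}(\xi_3,\tau)$ coincides with $H(z_1,\tau) \lessgtr H(z_3,\tau)$, matching the case hypotheses of the lemma with those of the quoted results of Nishiura and Shi.

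Next, I would apply (i)--(iii) of the cited results to obtain the pointwise limit of $w_1^+(x;d,\tau)$ as $d\searrow 0$, and then push forward through the continuous function $w\mapsto u(w,\tau) = (\sqrt{w^2+4\gamma\delta\tau}+w)/(2\delta)$ given in \eqref{u+def}, using $u(\xi_i(\tau),\tau) = z_i(\tau)$. This instantly yields the claimed bulk values $z_1(\tau)$, $z_3(\tau)$ on the intervals where $w_1^+$ stabilizes at $\xi_1(\tau)$ or $\xi_3(\tau)$. For the endpoint values in cases (i) and (iii), I identify $\eta(\tau) := u(\widetilde{\eta}(\tau),\tau)$ and $\zeta(\tau) := u(\widetilde{\zeta}(\tau),\tau)$; the same change of variables transforms $\int_{\xi_1(\tau)}^{\widetilde{\eta}(\tau)} \widetilde{h}(s,\tau)\,ds = 0$ into the integral equation $\int_{z_1(\tau)}^{\eta(\tau)} h(r,\tau)(\delta+\gamma\tau/r^2)\,dr = 0$, and the containment $\widetilde{\eta}(\tau)\in(\xi_2,\xi_3)$ lifts to $\eta(\tau)\in(z_2(\tau),z_3(\tau))$ by strict monotonicity of $u(\cdot,\tau)$; the treatment of $\zeta(\tau)$ is symmetric.

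The main obstacle, though essentially bookkeeping, is checking that $\widetilde{h}(\cdot,\tau)$ fits the bistable framework required by the cited lemmas: three simple zeros $\xi_1 < \xi_2 < \xi_3$ with the correct alternating signs, nondegeneracy $\widetilde{h}_w(\xi_i,\tau)\neq 0$ at the outer zeros, and the sign of $\widetilde{H}(\xi_3,\tau)-\widetilde{H}(\xi_1,\tau)$ controlling the limit. The first two follow from Lemma \ref{hlem} together with the chain rule $\widetilde{h}_w = h_u\,(\delta + \gamma\tau/u^2)^{-1}$; the trichotomy of $\widetilde{H}$ is the identity proved above. A small point worth flagging is that at the jump location in case (ii) the limit value at the single discontinuity point must be interpreted through the change of variables (it equals $u((\xi_1+\xi_3)/2,\tau)$), but this is immaterial for subsequent use of the limit since convergence elsewhere is uniform on compact subsets avoiding the jump.
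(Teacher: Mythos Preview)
Your proposal is correct and follows essentially the same approach as the paper: the paper simply notes that the change of variables $w=\delta u-\gamma\tau/u$ yields $\widetilde{H}(w,\tau)=H(u,\tau)$ and then states that the known results (i)--(iii) for $w_1^+$ (citing Nishiura and Shi) transfer to the claimed limits for $u_1^+$. Your observation that the value at $x=1/2$ in case (ii) should strictly be $u\bigl((\xi_1(\tau)+\xi_3(\tau))/2,\tau\bigr)$ rather than $(z_1(\tau)+z_3(\tau))/2$ is a valid minor correction to the statement, and as you note it is immaterial for the subsequent use of the lemma.
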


Our next task is to construct the set of nonconstant
solutions of \eqref{IS1}
by choosing functions in the set of solutions
of \eqref{IS1-1}-\eqref{IS1-2} to match \eqref{IS1-3}.
The following lemma will be useful to catch up with
the global bifurcation branch of nonconstant solutions of
\eqref{IS1}.
\begin{lem}\label{fsflem}
Suppose that $\tau >0$ is 
sufficiently small.
Then the following properties hold:
\begin{enumerate}[{\rm (i)}]
\item
If $B<A<C$,
then
$$
f\biggl(z_{1}(\tau ), \dfrac{\tau}{z_{1}(\tau )}\biggr)<0,
\quad
f\biggl(z_{2}(\tau ), \dfrac{\tau}{z_{2}(\tau )}\biggr)>0
\quad\mbox{and}\quad
f\biggl(z_{3}(\tau ), \dfrac{\tau}{z_{3}(\tau )}\biggr)<0.$$
\item
If $C<A<B$,
then
$$
f\biggl(z_{1}(\tau ), \dfrac{\tau}{z_{1}(\tau )}\biggr)>0,
\quad
f\biggl(z_{2}(\tau ), \dfrac{\tau}{z_{2}(\tau )}\biggr)>0
\quad\mbox{and}\quad
f\biggl(z_{3}(\tau ), \dfrac{\tau}{z_{3}(\tau )}\biggr)>0.$$
\end{enumerate}
\end{lem}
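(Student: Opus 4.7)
The plan is to substitute the asymptotic expansions of $z_i(\tau)$ from Lemma \ref{hlem} into the reduced expression
$$
f\bigl(u,\tau/u\bigr)=u(a_{1}-b_{1}u)-c_{1}\tau,
$$
and compute the leading-order sign as $\tau\searrow 0$. I would treat the three roots separately because, while $z_1$ and $z_2$ admit leading-order asymptotics that feed directly into $f$, the root $z_3$ requires one more Taylor term.

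For $z_1(\tau)$, Lemma \ref{hlem} gives $z_1(\tau)=(c_{2}/a_{2})\tau+o(\tau)$, so
$$
f\bigl(z_{1},\tau/z_{1}\bigr)
=\frac{c_{2}}{a_{2}}a_{1}\tau-c_{1}\tau+o(\tau)
=c_{2}(A-C)\tau+o(\tau),
$$
whose sign is that of $A-C$, giving the stated conclusion in (i) and (ii). For $z_2(\tau)$, using $z_2(\tau)\sim\sqrt{\gamma a_{2}/a_{1}}\,\sqrt{\tau}$ one finds
$$
f\bigl(z_{2},\tau/z_{2}\bigr)=\sqrt{\gamma a_{1}a_{2}\,\tau}\,(1+o(1))-c_{1}\tau,
$$
which is positive for small $\tau>0$ since $\sqrt{\tau}$ dominates $\tau$; this handles the middle statements in both (i) and (ii) simultaneously, independent of which competition regime we are in.

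The main obstacle is $z_3(\tau)$: since $z_3(\tau)\to a_{1}/b_{1}$, a direct substitution only yields $f(z_3,\tau/z_3)=o(1)$, so the result depends on the $O(\tau)$-correction. My plan is to write $z_3(\tau)=a_{1}/b_{1}+\varepsilon(\tau)$ with $\varepsilon(\tau)\to 0$, determine $\varepsilon(\tau)$ from the defining relation $h(z_3(\tau),\tau)=0$, and then use
$$
z_3(a_{1}-b_{1}z_3)=-a_{1}\varepsilon+O(\varepsilon^{2}).
$$
From the explicit form \eqref{hdef} one has
$$
h(a_{1}/b_{1},\tau)=\tau\Bigl(-c_{1}+\gamma b_{2}-\tfrac{\gamma a_{2}b_{1}}{a_{1}}\Bigr)+O(\tau^{2}),\qquad h_{u}(a_{1}/b_{1},0)=-a_{1}\neq 0,
$$
so the implicit function theorem (or a direct one-step Newton expansion) yields
$$
\varepsilon(\tau)=\frac{\tau}{a_{1}}\Bigl(-c_{1}+\gamma b_{2}-\tfrac{\gamma a_{2}b_{1}}{a_{1}}\Bigr)+o(\tau).
$$
Substituting back gives
$$
f\bigl(z_{3},\tau/z_{3}\bigr)=-a_{1}\varepsilon(\tau)-c_{1}\tau+o(\tau)=\frac{\gamma b_{2}}{A}(B-A)\,\tau+o(\tau),
$$
so its sign is that of $B-A$, yielding the stated sign of $f(z_3,\tau/z_3)$ in both (i) and (ii).

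Assembling the three computations finishes the lemma; the only non-routine point is the second-order expansion for $z_3$, which is what I would write out in full detail, while the $z_1$ and $z_2$ cases I would dispatch by one line each citing Lemma \ref{hlem}.
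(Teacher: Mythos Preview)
Your argument is correct, and the direct asymptotic expansion you carry out for $z_3(\tau)$ is the right level of precision: the first-order correction $\varepsilon(\tau)$ you extract from $h(a_1/b_1+\varepsilon,\tau)=0$ indeed gives $f(z_3,\tau/z_3)=(\gamma b_2/A)(B-A)\tau+o(\tau)$, settling both competition regimes at once.

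The paper proceeds differently. Rather than expanding $z_3(\tau)$ to second order, it introduces the zeros $Z_1(f,\tau)<Z_2(f,\tau)$ of $u\mapsto f(u,\tau/u)$ and $Z_1(g,\tau)<Z_2(g,\tau)$ of $u\mapsto g(u,\tau/u)$, compares their asymptotic positions, and establishes an interlacing order depending on the sign of $A-C$ and $A-B$. In the strong-competition case $B<A<C$ this ordering alone forces $z_1<Z_1(f)$, $Z_1(f)<z_2<Z_2(f)$, and $z_3>Z_2(f)$, from which the three signs follow by the intermediate value theorem without any expansion. In the weak-competition case the paper still computes $f(z_1,\tau/z_1)$ and $f(z_2,\tau/z_2)$ directly (exactly as you do), but then handles $z_3$ indirectly by a counting argument: knowing $z_1,z_2\in(Z_1(f),Z_2(f))$ forces all three zeros of $h$ to lie in $(Z_1(g),Z_2(f))\subset(Z_1(f),Z_2(f))$, so $f(z_3,\tau/z_3)>0$ as well. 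Your approach is more uniform across the two cases and gives an explicit leading coefficient for $f(z_3,\tau/z_3)$, at the cost of one Newton step; the paper's geometric ordering avoids that computation but treats the two cases asymmetrically.
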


\begin{proof}
In view of \eqref{hdef},
we note that
three zeros of $h(u,\tau )$ 
are corresponding to
three intersections of
$u\mapsto f(u,\tau/u)$ and $u\mapsto g(u,\tau/u)$
on $\{\,u>0\,\}$.
Thus we summarize the profiles of
$$f\biggl(u, \dfrac{\tau}{u}\biggr)=u(a_{1}-b_{1}u)-c_{1}\tau
\quad
\mbox{and}
\quad
g\biggl(u, \dfrac{\tau}{u}\biggr)=
\dfrac{\tau}{u}\biggl(
a_{2}-\dfrac{c_{2}\tau}{u}\biggr)-b_{2}\tau
$$
for $u>0$ with each fixed small $\tau >0$.
It is noted that
\begin{equation}
\begin{split}
&\lim_{u\searrow 0}f\biggl(u, \dfrac{\tau}{u}\biggr)=-c_{1}\tau,\quad
\lim_{u\to\infty}f\biggl(u, \dfrac{\tau}{u}\biggr)=-\infty,\quad
\\
&\lim_{u\searrow 0}g\biggl(u, \dfrac{\tau}{u}\biggr)=-\infty,\quad\ \,
\lim_{u\to\infty}g\biggl(u, \dfrac{\tau}{u}\biggr)=-b_{2}\tau.
\end{split}
\nonumber
\end{equation}
Clearly, $u\mapsto f(u, \tau/u )$ has two zeros
$$
0<Z_{1}(f,\tau ):=\dfrac{a_{1}-\sqrt{a_{1}^{2}-4b_{1}c_{1}\tau }}{2b_{1}}
<
Z_{2}(f,\tau ):=\dfrac{a_{1}+\sqrt{a_{1}^{2}-4b_{1}c_{1}\tau }}{2b_{1}}
$$
if $\tau\in (0,a_{1}^{\,2}/4b_{1}c_{1})$,
whereas
$u\mapsto g(u, \tau/u )$ has two zeros
$$
0<Z_{1}(g,\tau ):=\dfrac{a_{2}-\sqrt{a_{2}^{2}-4b_{2}c_{2}\tau }}{2b_{2}}
<
Z_{2}(g,\tau ):=\dfrac{a_{2}+\sqrt{a_{2}^{2}-4b_{2}c_{2}\tau }}{2b_{2}}
$$
if $\tau\in (0,a_{2}^{\,2}/4b_{2}c_{2})$.
It follows from
$$
\lim_{\tau\searrow 0}\dfrac{Z_{1}(f,\tau )}{\tau }=\dfrac{c_{1}}{a_{1}}
\quad\mbox{and}\quad
\lim_{\tau\searrow 0}\dfrac{Z_{1}(g,\tau )}{\tau }=\dfrac{c_{2}}{a_{2}}
$$
that
\begin{equation}\label{Z1}
\begin{cases}
Z_{1}(g, \tau )<Z_{1}(f, \tau )\quad &\mbox{if}\ A<C,\\
Z_{1}(f, \tau )<Z_{1}(g, \tau )\quad &\mbox{if}\ C<A.\\
\end{cases}
\end{equation}
for sufficiently small $\tau >0$.
Furthermore, it follows from
$$
\lim_{\tau\searrow 0}Z_{2}(f,\tau )=\dfrac{a_{1}}{b_{1}}
\quad\mbox{and}\quad
\lim_{\tau\searrow 0}Z_{2}(g,\tau )=\dfrac{a_{2}}{b_{2}}
$$
that
\begin{equation}\label{Z2}
\begin{cases}
(Z_{1}(f, \tau )<\,)\,Z_{2}(g, \tau )<Z_{2}(f, \tau )\quad &\mbox{if}\ B<A,\\
(Z_{1}(g, \tau )<\,)\,Z_{2}(f, \tau )<Z_{2}(g, \tau )\quad &\mbox{if}\ A<B.\\
\end{cases}
\end{equation}
for sufficiently small $\tau >0$.
Therefore, we know from \eqref{Z1} and \eqref{Z2} that
\begin{equation}\label{Z1Z2}
\begin{cases}
Z_{1}(g, \tau )<Z_{1}(f, \tau )<Z_{2}(g, \tau )<Z_{2}(f, \tau )
\quad&\mbox{if}\ B<A<C,\\
Z_{1}(f, \tau )<Z_{1}(g, \tau )<Z_{2}(f, \tau )<Z_{2}(g, \tau )
\quad&\mbox{if}\ C<A<B\\
\end{cases}
\end{equation}
for sufficiently small $\tau >0$.

In the strong competition case $B<A<C$,
we know from
\eqref{hdef} and \eqref{Z1Z2} that
three zeros $z_{j}(\tau )$ 
$(j=1,2,3)$
of $h(u,\tau )$ are located as
$$
0<z_{1}(\tau )<Z_{1}(g,\tau )<Z_{1}(f,\tau )<z_{2}(\tau )<Z_{2}(g,\tau )
<Z_{2}(f,\tau )<z_{3}(\tau )
$$
if $\tau >0$ is sufficiently small.
Hence the intermediate value theorem yields the assertion (i).

Next we consider the weak competition case $C<A<B$.
By virtue of \eqref{Z1Z2},
the intermediate value theorem ensures 
that the number of zeros of $h(u,\tau )$
in $u\in (Z_{1}(g,\tau), Z_{2}(f,\tau ))$
is one or three
if $\tau >0$ is sufficiently small.
Here we recall Lemma \ref{hlem} to note
\begin{equation}
\begin{split}
&\dfrac{1}{\tau}f\biggl(z_{1}(\tau ), \dfrac{\tau}{z_{1}(\tau )}\biggr)
=\dfrac{z_{1}(\tau )}{\tau}
(a_{1}-b_{1}z_{1}(\tau ))-c_{1}
\to \dfrac{c_{2}}{a_{2}}a_{1}-c_{1}=c_{2}(A-C)>0,\\
&\dfrac{1}{\sqrt{\tau}}f\biggl(z_{2}(\tau ), \dfrac{\tau}{z_{2}(\tau )}\biggr)
=
\dfrac{z_{2}(\tau )}{\sqrt{\tau}}(a_{1}-b_{1}z_{2}(\tau ))-c_{1}\sqrt{\tau}
\to \sqrt{\gamma a_{1}a_{2}}>0
\end{split}
\nonumber
\end{equation}
as $\tau\searrow 0$.
Then $f(z_{i}(\tau ), \tau /z_{i}(\tau ))>0$
for $i=1,2$ if $\tau >0$ is sufficiently small.
Hence
the number of zeros of $h(u,\tau )$
in $u\in (Z_{1}(g,\tau), Z_{2}(f,\tau ))$
is three, thereby,
$f(z_{i}(\tau ), \tau /z_{i}(\tau ))>0$
for $i=1,2,3$.
Consequently, the assertion (ii) follows.
\end{proof}

The next lemma gives infinitely many pieces of 
local bifurcation curves that bifurcate from
$(d^{(j)}, u^{*}, \tau^{*})$ for every $j\in\mathbb{N}$,
where  $d^{(j)}$ is the positive number defined by \eqref{bif}.
\begin{lem}\label{biflem}
Suppose that $B<A<C$ or $C<A<B$.
Suppose further that $D(a_{i}, b_{i}, c_{i}, \gamma )>0$.
For each $j\in\mathbb{N}$,
there exists a local curve
$$\varGamma_{j,\varepsilon}=
\{\,(d(s), u(s), \tau (s))\,:\,
-\varepsilon
<s<\varepsilon\,\}\subset\mathbb{R}_{+}\times X
$$
such that
\begin{equation}
\begin{split}
&\varGamma^{+}_{j,\varepsilon}:=
\{\,(d(s), u(s), \tau (s))\,:\,
0
<s<\varepsilon\,\}\subset
\mathcal{S}^{+}_{j},\\
&\varGamma^{-}_{j,\varepsilon}:=
\{\,(d(s), u(s), \tau (s))\,:\,
-\varepsilon
<s<0\,\}\subset
\mathcal{S}^{-}_{j},\\
&\lim\limits_{s\to 0}(d(s), u(s), \tau (s))= (d^{(j)}, u^{*}, \tau^{*})
\quad\mbox{in}\ \mathbb{R}\times X.
\end{split}
\nonumber
\end{equation}
\end{lem}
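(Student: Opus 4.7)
The plan is to apply the Crandall--Rabinowitz bifurcation theorem from a simple eigenvalue to the map $G(d,w,\tau) := (w,\tau) - F(d,w,\tau)$ on $\mathbb{R}_+ \times X$, where $F$ is the compact operator introduced in Section~5 specialized to $\Omega = (0,1)$. Since $(w^*,\tau^*)$ is a constant solution of \eqref{IS1} for every $d > 0$, the line $\{(d,w^*,\tau^*)\,:\,d>0\}$ is a trivial branch of zeros of $G$, and the aim is to find nontrivial branches sprouting from it at $d = d^{(j)}$.

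The first step is to verify that $\ker D_{(w,\tau)}G(d^{(j)},w^*,\tau^*) = \ker(I - L(d^{(j)}))$ is one-dimensional. In one space dimension the Neumann eigenfunctions are $\varPhi_k(x) = \sqrt{2}\cos(k\pi x)$ with simple eigenvalues $\lambda_k = (k\pi)^2$; because $\int_0^1 \varPhi_j\,dx = 0$ for $j \geq 1$, the second equation of \eqref{eg2} at $\mu = 0$ reduces to $\frac{(\gamma b_1 + \delta c_1)u^*}{c_1(\delta u^* + \gamma v^*)}\xi = 0$ and forces $\xi = 0$, so $(\varPhi_j,0) \in \ker(I - L(d^{(j)}))$. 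Conversely, the strict monotonicity $d^{(k)} = \mathrm{const}/\lambda_k$ from \eqref{bif} together with the simplicity of the Neumann spectrum yields $\mu_k(d^{(j)}) \neq 0$ for every $k \neq j$, while \eqref{mu0} shows $\mu_0^-(d^{(j)})\mu_0^+(d^{(j)}) \neq 0$ under both weak and strong competition. Hence the kernel is exactly the one-dimensional span of $(\varPhi_j,0)$, and the Fredholm alternative gives that the range of $I - L(d^{(j)})$ has codimension one.

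For the transversality condition, a direct Fourier computation using the diagonal action of $L(d)$ on the $j$-th mode with zero $\tau$-component gives $\partial_d(I - L(d))(\varPhi_j, 0)\big|_{d=d^{(j)}} = \mu_j'(d^{(j)})(\varPhi_j, 0)$, and $\mu_j'(d^{(j)}) > 0$ by \eqref{muj} and \eqref{bif0} (the numerator in \eqref{bif0} being positive by $D(a_i,b_i,c_i,\gamma) > 0$). Moreover $(\varPhi_j,0) \notin \mathrm{Range}(I - L(d^{(j)}))$: any preimage $(\psi,\eta)$ would require its $j$-th Fourier coefficient $p_j$ to satisfy $\mu_j(d^{(j)})p_j = 1$, impossible since $\mu_j(d^{(j)}) = 0$. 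Thus the Crandall--Rabinowitz hypotheses are fulfilled and produce a $C^1$ curve $\varGamma_{j,\varepsilon} = \{(d(s),w(s),\tau(s))\,:\,|s|<\varepsilon\}$ through $(d^{(j)},w^*,\tau^*)$ at $s=0$ with the expansion $(w(s),\tau(s)) = (w^* + s\varPhi_j,\tau^*) + o(s)$ in $X$.

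Finally, through the $C^1$-change of variables $u = (\sqrt{w^2 + 4\gamma\delta\tau}+w)/(2\delta)$, whose $w$-derivative at $(w^*,\tau^*)$ is strictly positive, one obtains $u(s)'(x) = -sc\sqrt{2}\,j\pi\sin(j\pi x) + o(s)$ in $C([0,1])$ with some $c > 0$. Since the sign of $\sin(j\pi x)$ alternates on the subintervals $((i-1)/j, i/j)$, for $s > 0$ small the solution $(d(s),u(s),\tau(s))$ lies in $\mathcal{S}^-_j$ and for $s < 0$ it lies in $\mathcal{S}^+_j$; relabeling $s \mapsto -s$ if necessary produces the desired decomposition $\varGamma^\pm_{j,\varepsilon} \subset \mathcal{S}^\pm_j$ with $(d(s),u(s),\tau(s)) \to (d^{(j)}, u^*, \tau^*)$ as $s \to 0$. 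The principal subtlety is the one-dimensionality of the kernel, which relies both on the spectral simplicity of $-\Delta_N$ on $(0,1)$ and on the nondegeneracy $\mu_0^-(d^{(j)})\mu_0^+(d^{(j)}) \neq 0$ guaranteed by the weak/strong competition assumption.
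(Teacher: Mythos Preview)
Your proof is correct and follows the same route as the paper: both apply the Crandall--Rabinowitz bifurcation theorem \cite{CR} to the map $(w,\tau)\mapsto(w,\tau)-F(d,w,\tau)$ at the constant state $(w^*,\tau^*)$, using the eigenvalue analysis of $I-L(d)$ carried out in Lemma~\ref{indexlem}. The paper merely sketches this, deferring the verification of the kernel, codimension, and transversality hypotheses to the analogous argument in \cite[Theorem~4.1]{Ku2}, whereas you spell these out explicitly---in particular, your use of the one-dimensional spectral simplicity $\lambda_k=(k\pi)^2$ to isolate the kernel direction $(\varPhi_j,0)$, and your Fourier-coefficient obstruction showing $(\varPhi_j,0)\notin\mathrm{Range}(I-L(d^{(j)}))$, are exactly the details the paper leaves implicit.
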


\begin{proof}
By a similar manner as the proof of 
\cite[Theorem 4.1]{Ku2},
we can construct the required local curve 
$\varGamma_{j,\varepsilon}$ which
forms a piece of a bifurcation curve of 
solutions of \eqref{IS1} bifurcating 
from the pitchfork bifurcation
point $(d^{(j)}, u^{*}, \tau^{*})$.

Actually, in view of the proof of Lemma \ref{indexlem},
one can recall that
the operator $F(d, w, \tau )$ in \eqref{Fdef}
associated with \eqref{IS1} 
is degenerate, in the sense that
the operator $I-L(d)$ 
has a zero eigenvalue,
if and only if $d=d^{(j)}$ with some $j\in\mathbb{N}$.,
Additional conditions for use of
the local bifurcation theorem
\cite[Theorem 1.7]{CR}
by Crandall and Rabinowitz
can be verified by a similar argument
to the proof of \cite[Theorem 4.1]{Ku2}.
\end{proof}
By virtue of Lemma \ref{hlem}, we set
$$
\widetilde{T}:=\sup\{\,T\,:\,
h(u,\tau)\ \mbox{has three zeros}\ 
0<z_{1}(\tau)<z_{2}(\tau)<z_{3}(\tau )\ \mbox{for any}\ \tau\in (0,T)\,\},
$$
and
$$
\widetilde{\tau}=\min\{\,\widetilde{T},\overline{\tau}\,\},
$$
where $\overline{\tau}$ is defined by 
Theorem \ref{nonexthm}.
\begin{lem}
Suppose that $B<A<C$ or $C<A<B$.
Suppose further that $D(a_{i}, b_{i}, c_{i}, \gamma )>0$.
Then it holds that $\tau^{*}\in\mathcal{T}$ and $u^{*}=z_{2}(\tau^{*})$.
Furthermore,
the following {\rm (i)} and {\rm (ii)} hold true:
\begin{enumerate}[{\rm (i)}]\label{taulem}
\item
If $B<A<C$ and $D(a_{i}, b_{i}, c_{i}, \gamma )>0$, then 
$\tau^{*}\in (0,\widetilde{\tau})$.
\item
If $C<A<B$ and $D(a_{i}, b_{i}, c_{i}, \gamma )>0$, 
then 
$\tau^{*}\not\in (0,\widetilde{\tau})$.
\end{enumerate}
\end{lem}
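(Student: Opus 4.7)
The plan is to prove the three assertions in order: first the common claim that $\tau^{*}\in\mathcal{T}$ and $u^{*}=z_{2}(\tau^{*})$, then the strong-competition case~(i), then the weak-competition case~(ii). The common claim is established as follows. Since $(u^{*},v^{*})$ is the positive constant solution, $f(u^{*},v^{*})=g(u^{*},v^{*})=0$, so $h(u^{*},\tau^{*})=0$. A computation identical to the one carried out inside the proof of Lemma~\ref{indexlem} yields
\[
u^{*}\,h_{u}(u^{*},\tau^{*})=(\gamma b_{2}+c_{1})\tau^{*}-b_{1}(u^{*})^{2}-\gamma c_{2}(v^{*})^{2},
\]
which by \eqref{bif} is strictly positive exactly when $D(a_{i},b_{i},c_{i},\gamma)>0$. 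Since $h(\cdot,\tau^{*})\to+\infty$ as $u\searrow 0$, $h(\cdot,\tau^{*})\to-\infty$ as $u\to\infty$, and the quartic $u^{2}h(u,\tau^{*})$ has at most three sign changes in its coefficients (Descartes' rule), the positivity of $h_{u}$ at the simple zero $u^{*}$ forces exactly three simple positive zeros $z_{1}<z_{2}<z_{3}$ with $u^{*}=z_{2}(\tau^{*})$. Finally, AM-GM applied to the identities $b_{i}u^{*}+c_{i}v^{*}=a_{i}$ yields $\tau^{*}=u^{*}v^{*}\le\overline{\tau}$, so $\tau^{*}\in\mathcal{T}$.

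Both (i) and (ii) rest on the following observation: if $f(z_{i}(\tau),\tau/z_{i}(\tau))=0$ for some admissible $\tau$, then since $h(z_{i})=0$ also forces $g(z_{i},\tau/z_{i})=0$, the pair $(z_{i},\tau/z_{i})$ must coincide with the unique positive common zero $(u^{*},v^{*})$ of $f$ and $g$, so $\tau=\tau^{*}$ and $z_{i}=u^{*}$. Consequently, on any connected $\tau$-interval where three zeros persist, the signs of $f(z_{i}(\tau),\tau/z_{i}(\tau))$ are preserved away from $\tau=\tau^{*}$. For (i), suppose for contradiction that $\widetilde{T}\le\tau^{*}$. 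By Lemma~\ref{fsflem}(i) and sign preservation, $f(z_{1})<0<f(z_{2})$ and $f(z_{3})<0$ throughout $(0,\widetilde{T})$. At $\tau=\widetilde{T}$ two adjacent $z_{i}$'s must merge (a saddle-node collision); continuity of $f$ along both branches forces the common limiting value of $f$ at the merger to be simultaneously $\le 0$ and $\ge 0$, hence zero, so the merger point is $(u^{*},v^{*})$ and $\widetilde{T}=\tau^{*}$. But then $u^{*}$ would be a double zero of $h(\cdot,\tau^{*})$, contradicting $h_{u}(u^{*},\tau^{*})>0$. Therefore $\widetilde{T}>\tau^{*}$, and combined with $\tau^{*}\le\overline{\tau}$ (where strict inequality follows from a direct AM-GM computation using $B\ne C$) this gives $\tau^{*}\in(0,\widetilde{\tau})$.

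For (ii), suppose for contradiction that $\widetilde{T}\ge\tau^{*}$. Sign preservation together with Lemma~\ref{fsflem}(ii) gives $f(z_{i}(\tau),\tau/z_{i}(\tau))>0$ for all $i=1,2,3$ and all $\tau\in(0,\tau^{*})$, so letting $\tau\nearrow\tau^{*}$ yields $f(z_{i}(\tau^{*}),\tau^{*}/z_{i}(\tau^{*}))\ge 0$ for each~$i$. On the other hand, $u^{*}=z_{2}(\tau^{*})$ solves the quadratic $b_{1}u^{2}-a_{1}u+c_{1}\tau^{*}=0$, so $u^{*}$ equals either $Z_{1}(f,\tau^{*})$ or $Z_{2}(f,\tau^{*})$. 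If $u^{*}=Z_{2}(f,\tau^{*})$ then the strict inequality $z_{3}(\tau^{*})>u^{*}$ places $z_{3}(\tau^{*})$ outside the positivity interval $(Z_{1}(f,\tau^{*}),Z_{2}(f,\tau^{*}))$, giving $f(z_{3}(\tau^{*}))<0$; if $u^{*}=Z_{1}(f,\tau^{*})$ then $z_{1}(\tau^{*})<u^{*}$ symmetrically gives $f(z_{1}(\tau^{*}))<0$. Either contradicts the nonnegativity obtained in the limit, so $\widetilde{T}<\tau^{*}$ and hence $\tau^{*}\notin(0,\widetilde{\tau})$.

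The main technical point will be the sign-preservation step: it combines continuous dependence of the three simple roots $z_{i}(\tau)$ on $\tau$ in the three-root regime (a standard implicit-function argument since $h_{u}\ne 0$ at each $z_{i}$) with the key observation above that the uniqueness of $(u^{*},v^{*})$ as the only positive common zero of $f$ and $g$ prevents $f(z_{i}(\tau),\tau/z_{i}(\tau))$ from changing sign anywhere other than $\tau=\tau^{*}$.
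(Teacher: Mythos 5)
Your proof is correct and follows essentially the same route as the paper: the identity $u^{*}h_{u}(u^{*},\tau^{*})=(\gamma b_{2}+c_{1})\tau^{*}-b_{1}(u^{*})^{2}-\gamma c_{2}(v^{*})^{2}$ together with the signs of $h$ near $0$ and $\infty$ gives $u^{*}=z_{2}(\tau^{*})$ and $\tau^{*}\in\mathcal{T}$, and then the key observation that $f(z_{i}(\tau),\tau/z_{i}(\tau))=0$ forces $(z_{i}(\tau),\tau/z_{i}(\tau))=(u^{*},v^{*})$ (hence a forbidden degeneracy of the zeros of $h(\cdot,\tau^{*})$) is combined with Lemma \ref{fsflem} and continuity of the roots, exactly as in the paper. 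The one genuine difference is in part (i): your explicit root-collision argument at $\tau=\widetilde{T}$ makes precise the step the paper leaves implicit when it passes from the sign chart of $f(z_{i}(\tau),\tau/z_{i}(\tau))$ to the conclusion $\tau^{*}<\widetilde{\tau}$, and this is a worthwhile clarification. The only caveat is your claim that strictness of $\tau^{*}<\overline{\tau}$ follows from AM--GM and $B\neq C$: that argument only guarantees that one of the two bounds $\tau^{*}\le a_{i}^{\,2}/(4b_{i}c_{i})$ is strict, not necessarily the one attaining the minimum $\overline{\tau}$; the paper, however, asserts $\tau^{*}<\overline{\tau}$ with no justification at all, so you are not behind it on this point.
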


\begin{rem}\label{rem69}
By virtue of $\tau^{*}\in\mathcal{T}$,
the assertion (ii) of Lemma \ref{taulem} implies that
$\mathcal{T}$ is not connected in case where
$C<A<B$ and $D(a_{i}, b_{i}, c_{i}, \gamma )>0$.
An example of profiles of $h(u,\tau)$ in case
$B<A<C$ and $D(a_{i}, b_{i}, c_{i}, \gamma )>0$
is shown in Figure 2.
In the same setting as \cite[Figure 11]{BKS}
for the case $C<A<C$ and $D(a_{i}, b_{i}, c_{i}, \gamma )>0$,
profiles of $h(u,\tau)$ are shown in Figure 3.
In view of Figure 3, one can see that
$\mathcal{T}$ is not connected in this case.
\end{rem}

\begin{figure}
\centering
\subfigure[$\tau=\tau^{*}=1/9$]{
\includegraphics*[scale=0.4]{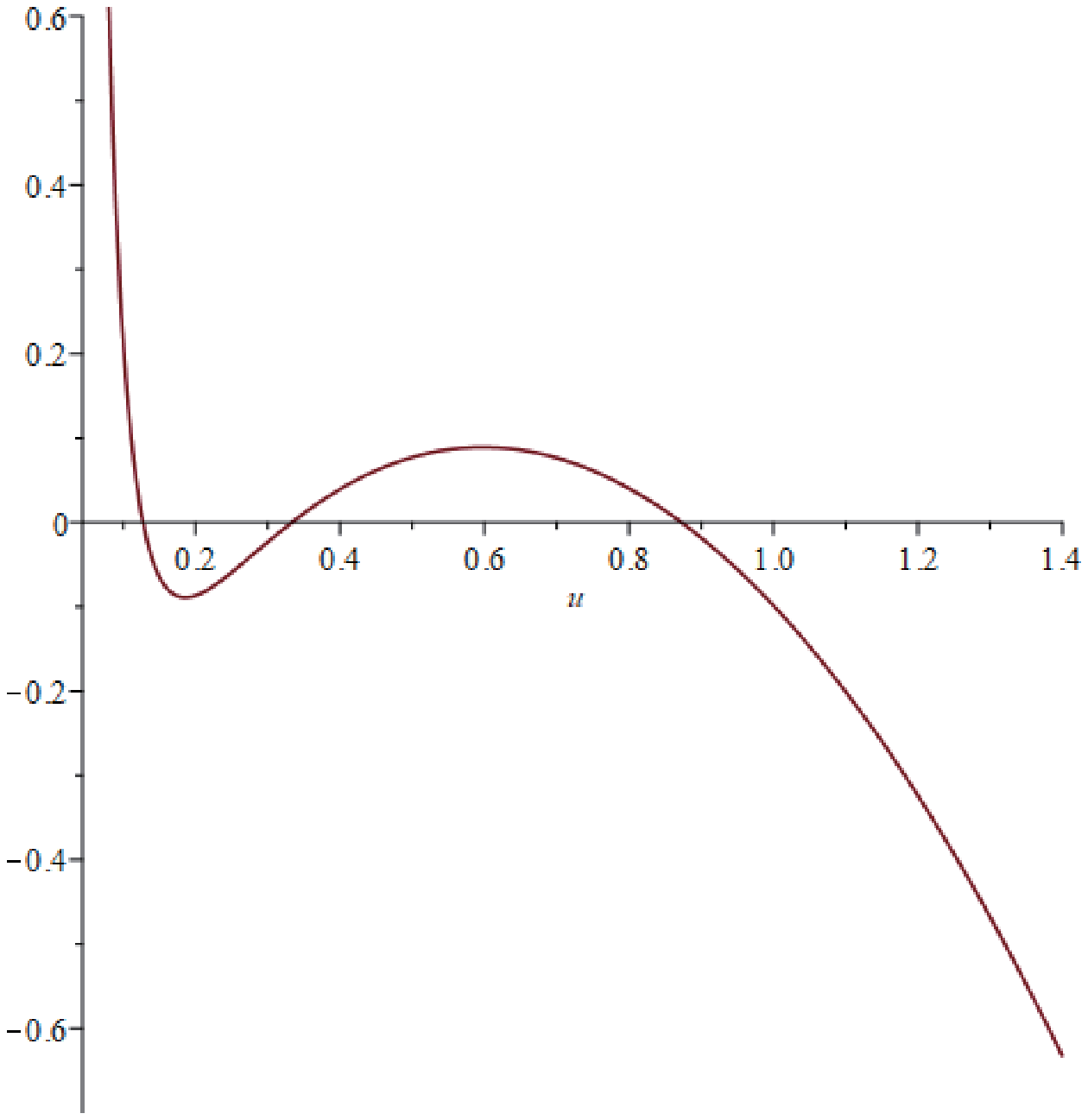}
\label{fig2a}}
\subfigure[$\tau=1/3$]{
\includegraphics*[scale=0.4]{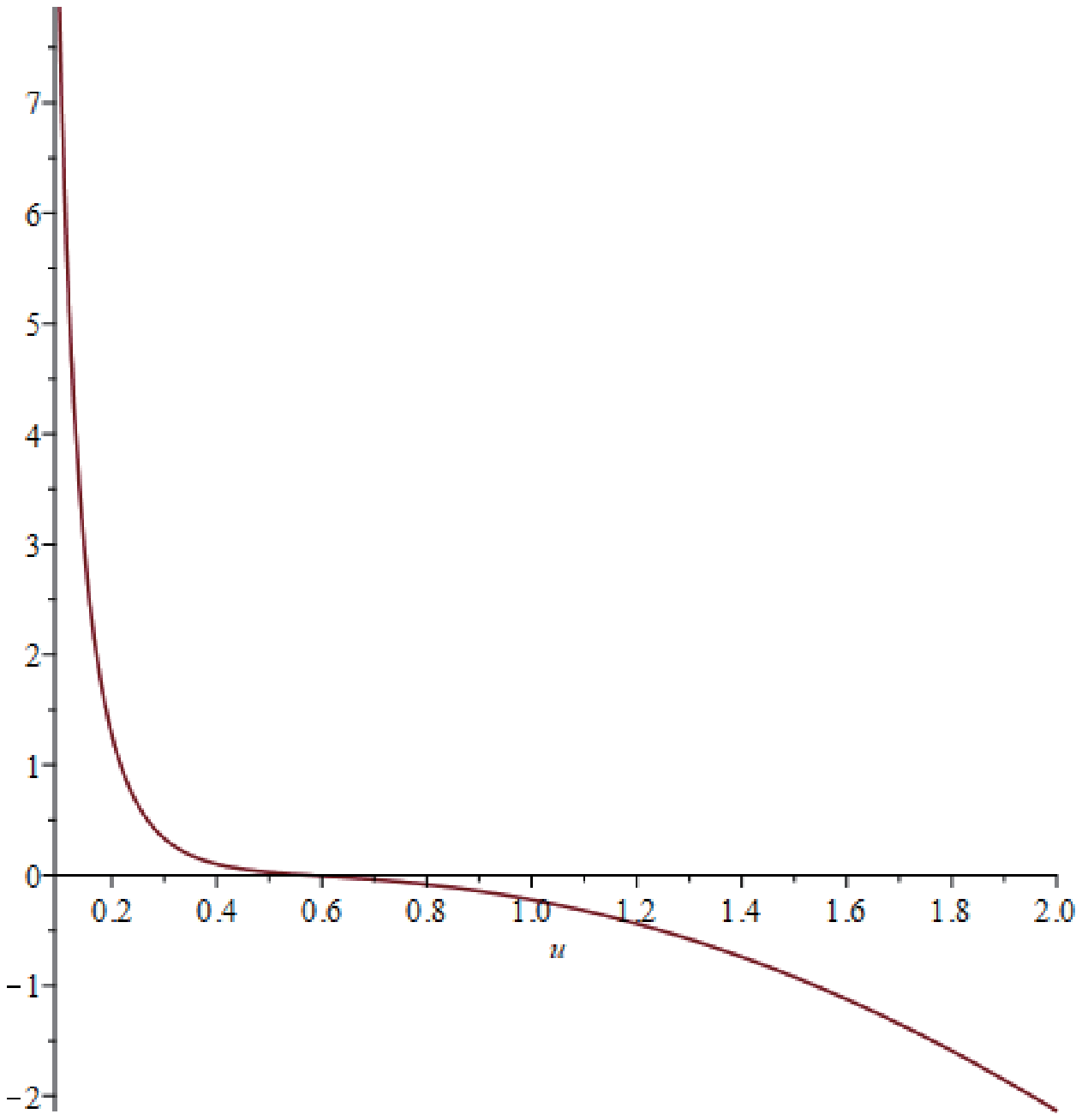}
\label{fig2b}
}

\caption{
Profiles of $h(u,\tau )$ with $(a_{1}, a_{2}, b_{1}, b_{2}, c_{1}, c_{2},\gamma)=
(1,1,1,2,2,1,1)$}
\label{fig2}
\end{figure}

\begin{figure}
\centering
\subfigure[$\tau=1/20$]{
\includegraphics*[scale=0.4]{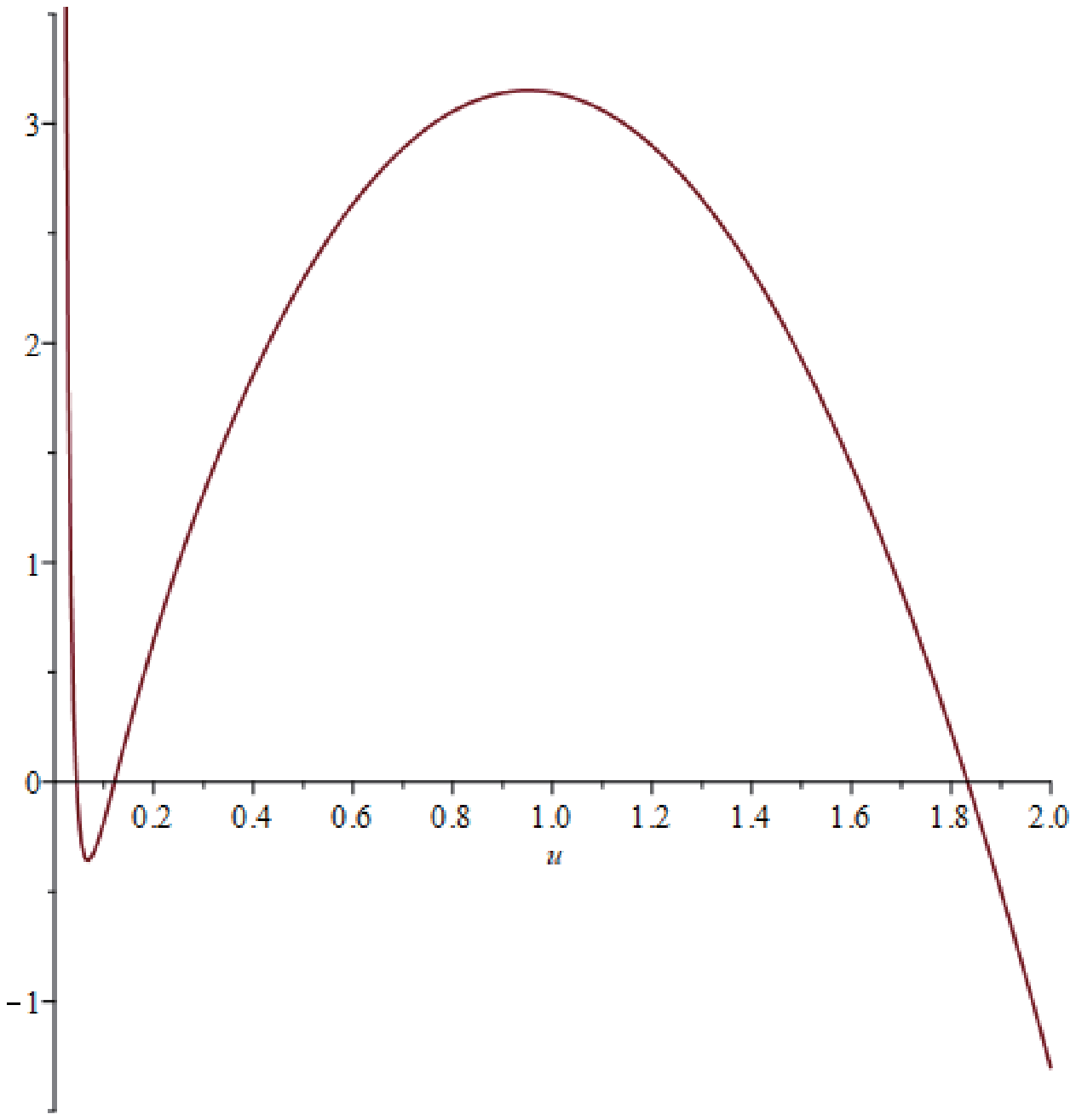}
\label{fig3a}}
\subfigure[$\tau=1/3$]{
\includegraphics*[scale=0.4]{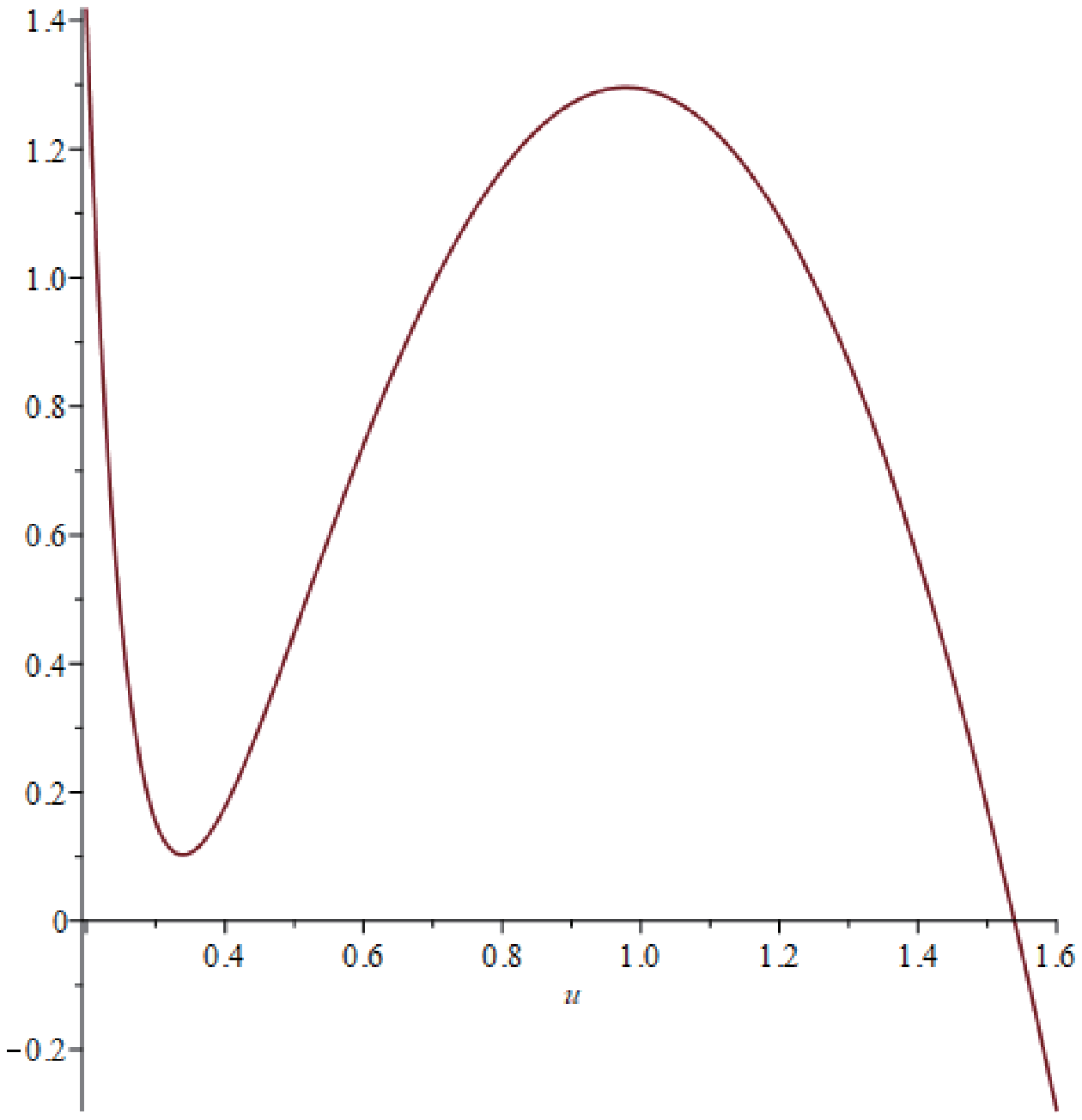}
\label{fig3b}
}

\centering
\subfigure[$\tau=\tau^{*}=207/392$]{
\includegraphics*[scale=0.4]{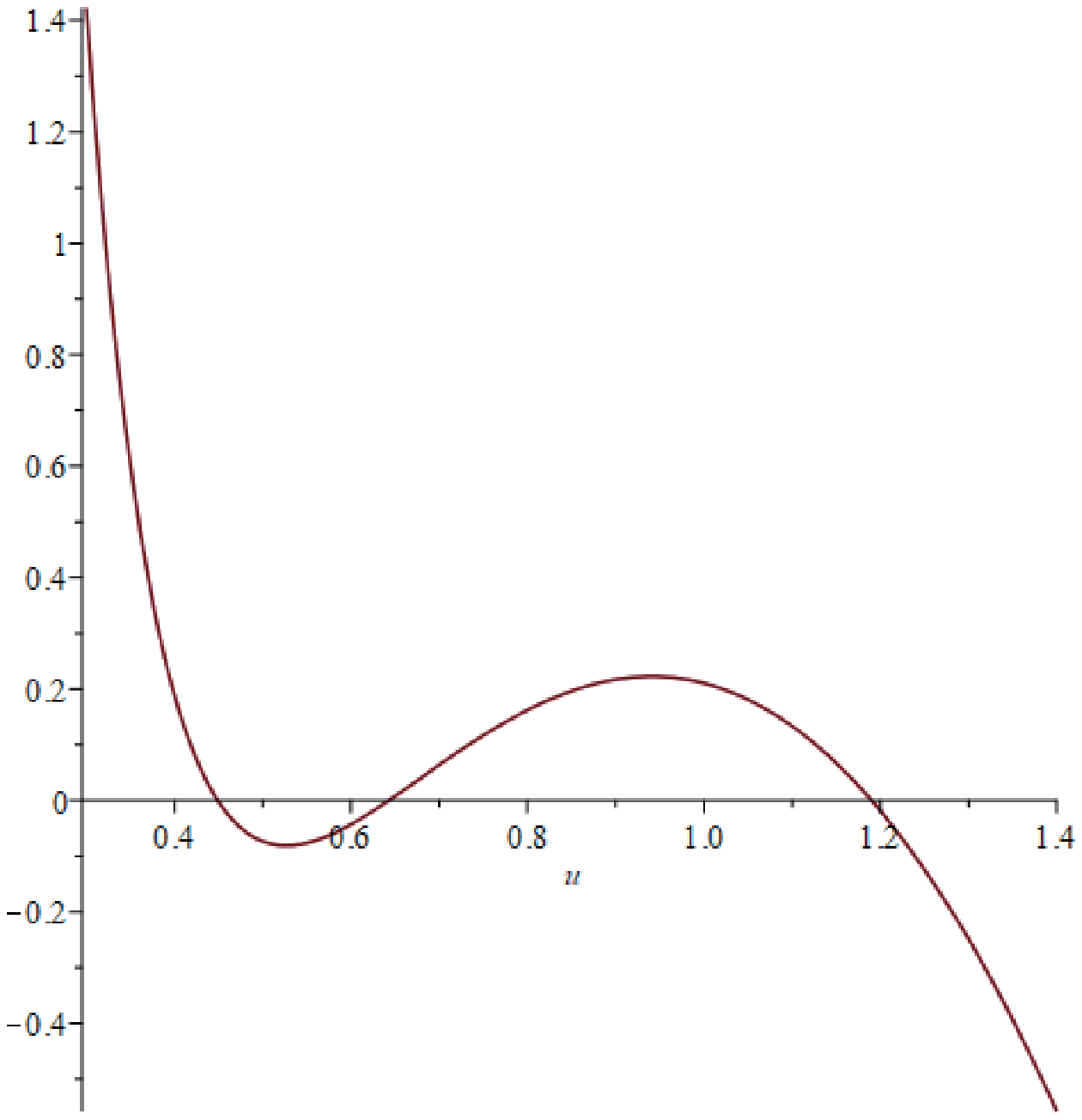}
\label{fig3c}}
\subfigure[$\tau=1$]{
\includegraphics*[scale=0.4]{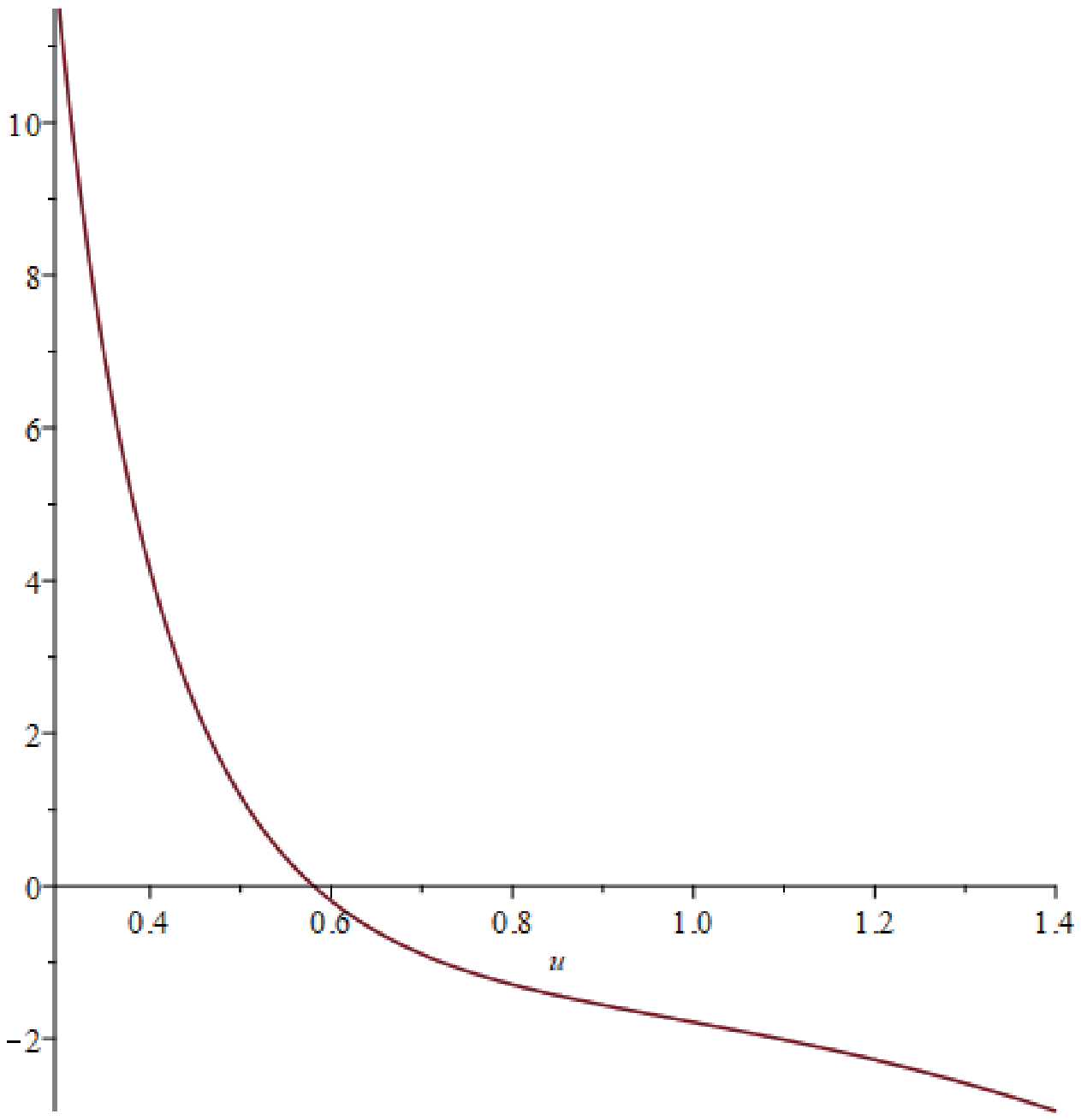}
\label{fig3d}
}

\caption{
Profiles of $h(u,\tau )$ with $(a_{1}, a_{2}, b_{1}, b_{2}, c_{1}, c_{2},\gamma)=
(15/2, 16/7, 4,1,6,2,1)$}
\label{fig3}
\end{figure}

\begin{proof}[Proof of Lemma \ref{taulem}]
Suppose that $B<A<C$ or $C<A<B$.
We recall that \eqref{IS1}
has a unique positive constant solution
$(u,\tau)=(u^{*},\tau^{*})$ 
$(w^{*}=\delta u^{*}-\gamma\tau^{*}/u^{*})$
for any $d>0$, that is,
$$
f(u^{*}, v^{*})=g(u^{*}, v^{*})=0
\quad\mbox{with}\quad v^{*}=\dfrac{\tau^{*}}{u^{*}}.
$$
By a straightforward calculation, one can verify that
$D(a_{i}, b_{i}, c_{i}, \gamma )>0$ is equivalent to 
$$
h_{u}(u^{*}, \tau^{*})>0.
$$
Taking account for facts that $\lim_{u\searrow 0}h(u,\tau)=\infty$,
$h(u^{*},\tau^{*})=0$ and $\lim_{u\to\infty}h(u,\tau )=-\infty$,
we see that
\begin{equation}\label{const2}
u^{*}=z_{2}(\tau^{*}),
\end{equation}
thereby,
\begin{equation}\label{const22}
f\biggl(z_{2}(\tau^{*}), \dfrac{\tau^{*}}{z_{2}(\tau^{*})}\biggr)
=g\biggl(z_{2}(\tau^{*}), \dfrac{\tau^{*}}{z_{2}(\tau^{*})}\biggr)=0.
\end{equation}
Furthermore, we obtain $\tau^{*}\in\mathcal{T}$.
Actually, if $\tau^{*}\not\in\mathcal{T}$, then
$\tau^{*}>\overline{\tau}$ because $h(u,\tau^{*})$ has three zeros
on $\{\,u>0\,\}$.
However, 
\eqref{tauest1} and \eqref{tauest2}
ensure $\tau^{*}\le\overline{\tau}$.
This is a contradiction.

In order to prove the assertion (i),
we assume that $B<A<C$ and $D(a_{i}, b_{i}, c_{i}, \gamma )>0$.
In view of (i) of Lemma \ref{fsflem}, we recall that
\begin{equation}\label{wcf}
f\biggl(z_{1}(\tau ), \dfrac{\tau}{z_{1}(\tau )}\biggr)<0\quad\mbox{and}\quad
f\biggl(z_{3}(\tau ), \dfrac{\tau}{z_{3}(\tau )}\biggr)<0
\end{equation}
and
\begin{equation}\label{z2p}
f\biggl(z_{2}(\tau ),\dfrac{\tau}{z_{2}(\tau )}\biggr)>0
\end{equation}
for sufficiently small $\tau >0$.
We shall show that \eqref{wcf} holds true for any 
$\tau\in (0, \widetilde{\tau})$.
Suppose for contradiction that there exists
$\tau_{0}\in (0,\widetilde{\tau})$ such that
\begin{equation}\label{13}
f\biggl(z_{i_{0}}(\tau_{0}),\dfrac{\tau_{0}}{z_{i_{0}}(\tau_{0})}\biggr)=0
\quad\mbox{with some}\ i_{0}\in\{\,1,3\,\}.
\end{equation}
Furthermore, $h(z_{i_{0}}(\tau_{0}), \tau_{0})=0$ implies
$$
f\biggl(z_{i_{0}}(\tau_{0}),\dfrac{\tau_{0}}{z_{i_{0}}(\tau_{0})}\biggr)=
g\biggl(z_{i_{0}}(\tau_{0}),\dfrac{\tau_{0}}{z_{i_{0}}(\tau_{0})}\biggr)=0.
$$
Hence it follows that $(z_{i_{0}}(\tau_{0}), \tau_{0})=(u^{*}, \tau^{*})$.
Together with \eqref{const2}, we see that
$z_{2}(\tau^{*})=z_{i_{0}}(\tau^{*})$, thereby,
the degeneracy of zeros of $h(u,\tau)$ occurs at $\tau=\tau^{*}$.
However, this contradicts the fact $\tau^{*}\in\mathcal{T}$.
Therefore,
by taking account for the uniqueness 
of positive roots of $f(u,\tau/u)=g(u,\tau/u)=0$,
we know from \eqref{const22} and \eqref{z2p} that
$$
f\biggl(z_{2}(\tau),\dfrac{\tau}{z_{2}(\tau )}\biggr)
\begin{cases}
>0\quad &\mbox{for}\ \tau\in (0, \tau^{*}),\\
=0\quad &\mbox{for}\ \tau=\tau^{*},\\
<0\quad &\mbox{for}\ \tau\in (\tau^{*},\infty)\cap\mathcal{T},
\end{cases}
$$
and moreover,
\eqref{wcf} holds true 
for any $\tau\in (0, \widetilde{\tau})$.
Together with $\tau^{*}<\overline{\tau}$,
we obtain
$\tau^{*}<\widetilde{\tau}$.

For the proof of the assertion (ii),
we assume that $C<A<B$ and $D(a_{i}, b_{i}, c_{i}, \gamma )>0$.
From (ii) of Lemma \ref{fsflem}, we recall that
$$
f\biggl(z_{1}(\tau ), \dfrac{\tau}{z_{1}(\tau )}\biggr)>0,
\quad
f\biggl(z_{2}(\tau ), \dfrac{\tau}{z_{2}(\tau )}\biggr)>0,
\quad
f\biggl(z_{3}(\tau ), \dfrac{\tau}{z_{3}(\tau )}\biggr)>0$$
for sufficiently small $\tau >0$.
Suppose for contradiction that
$\tau^{*}\in (0, \widetilde{\tau})$.
Then by \eqref{const22} and the continuity of 
$\tau\mapsto f(z_{i}(\tau ), \tau/z_{i}(\tau ))$,
there exists $\tau_{0}\in (0,\tau^{*})$ satisfying 
\eqref{13}.
As in the argument above, 
we are led to $z_{2}(\tau^{*})=z_{i_{0}}(\tau^{*})$ with some
$i_{0}\in\{1,3\}$. 
Again this contradicts the fact $\tau^{*}\in\mathcal{T}$.
Therefore, we can deduce that $\tau^{*}\not\in (0, \overline{\tau})$
in case that $C<A<B$ and $D(a_{i}, b_{i}, c_{i}, \gamma )>0$.
The proof of Lemma \ref{taulem} is complete.
\end{proof}

\begin{proof}[Proof of Theorem \ref{1dimthm}]
In what follows, we denote by $\varGamma^{\pm}_{j}$ 
the connected component of 
$
\{\,(d, u,  \tau)\in \mathbb{R}_{+}\times X\,:\,
\mbox{$(d, u, \tau)$ satisfies \eqref{IS1}}\,\}
$
which contains $\varGamma^{\pm}_{j,\varepsilon}$
obtained in Lemma \ref{biflem}.
We recall Remark \ref{taubarrem} to note that
any $(d, u,\tau)\in \varGamma^{\pm}_{j}$ satisfies 
$\tau\in\mathcal{T}$.

We first show
$\varGamma^{+}_{1}\subset\mathcal{S}^{+}_{1}$,
that is,
what $\varGamma^{+}_{1}$ does not connects with
any other $\varGamma^{\pm}_{j}$.
If not, 
there exist a solution $(\hat{d}, \hat{u}, \hat{\tau})
\,(\,\neq (d^{(1)},u^{*},\tau^{*})\,)$ 
of \eqref{IS1} and a sequence
$\{\,(d_{2,n}, u_{n}, \tau_{n})\,\}\subset\varGamma^{+}_{1}$ such that
$\lim_{n\to\infty}(d_{2,n}, u_{n}, \tau_{n})
=(\hat{d}, \hat{u}, \hat{\tau})$ in $\mathbb{R}\times X$
and $\hat{u}(x)$ has a degenerate critical point, that is,
$\hat{u}''(x_{0})=\hat{u}'(x_{0})=0$ with some $x_{0}\in [0,1]$.
Differentiating \eqref{IS1-1} by $x$, we see that
$\hat{w}:=\delta\hat{u}-\gamma\hat{\tau}/\hat{u}$
satisfies
$$
\begin{cases}
\hat{d}(\hat{w}')''+h_{u}(\hat{u},\hat{\tau})\hat{u}'=0,\quad 0<x<1,\\
\hat{w}'(x_{0})=\hat{w}''(x_{0})=0,
\end{cases}
$$
where the initial condition at $x_{0}$ comes from \eqref{wp}.
By the uniqueness of solutions of this initial value problem,
one can see that $\hat{w}'=0$, thereby,
$\hat{u}$ is a nonnegative constant.
If $\hat{\tau}>0$, then $(\hat{u},\hat{\tau})=(u^{*},\tau^{*})$.
In view of Lemma \ref{biflem}, we recall that
the branch of monotone solutions of \eqref{IS1} bifurcates from
the constant solution $(u^{*},\tau^{*})$ only at $d=d^{(1)}$.
This fact implies $\hat{d}=d^{(1)}$.
However, it contradicts the assumption.
If $\hat{\tau}=0$, then \eqref{IS1p} ensures that
$w_{n}=\delta u_{n}-\gamma \tau_{n}/u_{n}\to \hat{w}$
in $C^{1}(\overline{\Omega})$ and $(\hat{d},\hat{w})$ satisfies
\eqref{CS}.
Then, Proposition \ref{CSprop} leads to
$\hat{w}=\delta a_{1}/b_{1}$ or $\hat{w}=0$ or $\hat{w}=-\gamma a_{2}/c_{2}$.
However, one can verify that all of them are impossible 
following the argument below \eqref{below} in the proof of Lemma \ref{bddlem}.
Consequently, $\hat{\tau}=0$ is also impossible.
Therefore, we obtain $\varGamma^{+}_{1}\subset\mathcal{S}^{+}_{1}$.
By an essentially same argument, one can verify that
$\varGamma_{j}^{+}\subset\mathcal{S}_{j}^{+}$ and
$\varGamma_{j}^{-}\subset\mathcal{S}_{j}^{-}$ 
for each $j\in\mathbb{N}$.

According to the global bifurcation theorem
\cite{Ra}
by Rabinowitz
(see also
the unilateral global bifurcation theorem by 
Lop\'ez-Gom\'ez \cite[Theorem 6.4.3]{Lo}),
we can deduce that
$\varGamma^{+}_{1}$ 
reaches a singular limit $d\searrow 0$ or
a state with some $d>0$ and $\tau =0$ 
because $\varGamma^{+}_{1}$ cannot attain any 
bifurcation point $(d^{(j)}, u^{*}, \tau^{*})$,
the $(d,\tau )$ component of $\varGamma^{+}_{1}$
is uniformly bounded by Theorem \ref{nonexthm},
and the $u$ component of $\varGamma^{+}_{1}$ cannot blow up in 
$C^{1}([0,1])$ at any positive $d>0$ by Corollary \ref{Mcor}.
Then,
we take any sequence
$\{(u_{n}, \tau_{n}, d_{2,n})\}\subset\varGamma^{+}_{1}$
satisfying 
$\lim_{n\to\infty}d_{2,n}\tau_{n}=0$.
It follows from Theorem \ref{nonexthm} 
that
$\tau_{n}\in (0,\overline{\tau})$ for any $n\in\mathbb{N}$.
Then we may assume that
$\lim_{n\to\infty}\tau_{n}=\tau_{0}$
for some $\tau_{0}\in [0,\overline{\tau}]$
by passing to a subsequence if necessary.
We shall show that either of the following is true:
\begin{enumerate}[(I)]
\item
$\tau_{0}=0$;
\item
$\tau_{0}>0$
and 
$H(z_{1}(\tau_{0}), \tau_{0})=
H(z_{3}(\tau_{0}), \tau_{0})$.
\end{enumerate}
Suppose for contradiction 
that 
$\tau_{0}>0$ and
$H(z_{1}(\tau_{0}), \tau_{0})<
H(z_{3}(\tau_{0}), \tau_{0})$.
Hence it follows that $\lim_{n\to\infty}d_{2,n}=0$.
Then we can verify
\begin{equation}
\lim_{n\to\infty}u_{n}(x)=
\begin{cases}
z_{1}(\tau_{0} )\quad &\mbox{for}\ x\in [0,1),\\
\eta (\tau_{0} ) &\mbox{for}\ x=1
\end{cases}
\nonumber
\end{equation}
by a slight modification of the usual scaling procedure 
to prove (i) of Lemma \ref{singlem}.
Here we remark that
$\{u_{n}\}$ satisfies the integral constraint \eqref{IS1-3}
as well as \eqref{gint2}:
\begin{equation}\label{intk}
\displaystyle\int^{1}_{0}
f\biggl(u_{n}, \dfrac{\tau_{n}}{u_{n}}\biggr)=
\displaystyle\int^{1}_{0}
g\biggl(u_{n}, \dfrac{\tau_{n}}{u_{n}}\biggr)=
0
\quad\mbox{for any}\ n\in\mathbb{N}.
\end{equation}
By the Lebesgue dominated convergence theorem,
we set $n\to\infty$ in \eqref{intk} to get
$$
f\biggl(z_{1}(\tau_{0}), \dfrac{\tau_{0}}{z_{1}(\tau_{0})}
\biggr)=
g\biggl(z_{1}(\tau_{0}), \dfrac{\tau_{0}}{z_{1}(\tau_{0})}
\biggr)=
0.
$$
Therefore, the positivity of $\tau_{0}$ leads to
$(z_{1}(\tau_{0}), \tau_{0})=(u^{*}, \tau^{*})$.
Together with Lemma \ref{taulem}, we see that
$z_{1}(\tau^{*})=u^{*}=z_{2}(\tau^{*})$.
Obviously, this contradicts the fact that $\tau^{*}\in\mathcal{T}$.

Suppose for contradiction that
$\tau_{0}>0$ and
$H(z_{1}(\tau_{0}), \tau_{0}))
>H(z_{3}(\tau_{0} ), \tau_{0})$.
Then we know from (iii) of Lemma \ref{singlem} that
\begin{equation}
\lim_{k\to\infty}u_{k}(x)=
\begin{cases}
\zeta (\tau_{0} ) &\mbox{for}\ x=0,\\
z_{3}(\tau_{0} )\quad &\mbox{for}\ x\in (0,1].
\end{cases}
\nonumber
\end{equation}
In a similar manner,
we set $n\to\infty$ in \eqref{intk} to see
$z_{3}(\tau^{*})=u^{*}=z_{2}(\tau^{*})$.
However, it is impossible because $\tau^{*}\in\mathcal{T}$.
Consequently, the contradiction argument enables us to 
conclude that (I) or (II) holds true.

Next we shall show $\tau_{0}=0$ in the case
when $B<A<C$ and $D(a_{i}, b_{i}, c_{i}, \gamma )>0$.
It follows from (i) of Lemma \ref{taulem} that
$\tau^{*}\in(0, \widetilde{\tau})$.
By virtue of Lemma \ref{biflem},
if $(d,u,\tau )\in\varGamma^{\pm}_{j}$ is sufficiently close to
the bifurcation point $(d^{(j)}, u^{*}, \tau^{*})$, then
$\tau\in (0,\widetilde{\tau})$.
By taking account for the continuity of $\varGamma^{\pm}_{j}$, 
we see that $\tau\in\mathcal{T}$ and 
$\tau\in (0, \widetilde{\tau})$
as long as $(d, u, \tau )\in\varGamma_{j}^{\pm}$.
Hence it follows that
$\tau_{0}\in [0,\widetilde{\tau}]$.
Suppose for contradiction that
$\tau_{0}>0$.
Then, as we have already shown, 
$H(z_{1}(\tau_{0}), \tau_{0})=H(z_{3}(\tau_{0}), \tau_{0})$
follows. 
In such a case,
we remark that a slight delicate procedure
is required to derive the singular limiting behavior
of $\{u_{n}\}$ as $n\to\infty$.
Following the argument in the proof of 
\cite[Proposition 6.7]{KT} for instance, we can deduce that
\begin{equation}
\lim_{n\to\infty}u_{n}(x)=
\begin{cases}
z_{1}(\tau )\quad &\mbox{for}\ x\in [0,\ell),\\
(z_{1}(\tau )+z_{3}(\tau ))/2\quad &\mbox{for}\ x=\ell,\\
z_{3}(\tau ) &\mbox{for}\ x\in (\ell, 1]
\end{cases}
\nonumber
\end{equation}
with some $\ell\in (0,1)$.
Owing to the Lebesgue dominated convergence theorem,
we set $n\to\infty$ in 
$\int^{1}_{0}f(u_{n}, \tau_{n}/u_{n})=0$ to obtain
\begin{equation}\label{bal}
\ell
f\biggl(z_{1}(\tau_{0}), \dfrac{\tau_{0}}{z_{1}(\tau_{0})}
\biggr)
+(1-\ell)
f\biggl(z_{3}(\tau_{0}), \dfrac{\tau_{0}}{z_{3}(\tau_{0})}
\biggr)=0,\\
\end{equation}
If $B<A<C$ and $D(a_{i}, b_{i}, c_{i}, \gamma )>0$,
\eqref{bal} is impossible because
$f(z_{1}(\tau_{0}), \tau_{0}/z_{1}(\tau_{0}))<0$
and
$f(z_{3}(\tau_{0}), \tau_{0}/z_{3}(\tau_{0}))<0$
as in the proof of Lemma \ref{taulem} below \eqref{z2p}.
Consequently, this contradiction 
excludes the situation (II).
Therefore,
it holds true that $\tau_{0}=0$ 
in case where
$B<A<C$ and $D(a_{i}, b_{I}, c_{i}, \gamma )>0$.
In this case, we shall show $\lim_{n\to\infty}d_{2,n}=0$.
It follows from Theorem \ref{nonexthm} that
$d_{2,n}\in (0, \overline{d}\,]$ for any $n\in\mathbb{N}$.
Therefore, we may assume that
$\lim_{n\to\infty}d_{2,n}=d_{0}$ with some
$d_{0}\in [\,0,\overline{d}\,]$ 
by passing to a subsequence if necessary.
From Lemma \ref{aprlem} and \eqref{u+def},
we see that $\|u_{n}\|_{\infty}$ is uniformly bounded
with respect to $n\in\mathbb{N}$.
Then by \eqref{IS-1},
$w_{n}:=\delta u_{n}-\gamma \tau_{n}/u_{n}$
satisfies
$\lim_{n\to\infty}w_{n}=w_{0}$
with some $w_{0}\in C^{1}(\overline{\Omega })$.
It follows from Proposition \ref{CSprop} that
if $d_{0}>0$,
then $w_{0}=\delta a_{1}/b_{1}$ or
$w_{0}=0$ or
$w_{0}=-\gamma a_{2}/c_{2}$ in $\Omega$.
However, repeating the argument below \eqref{below} in 
the proof of Lemma \ref{bddlem},
we see that the above three situations cannot occur.
Then, we can deduce that $d_{0}=0$.
Consequently, we know that
$\lim_{n\to\infty}(d_{2,n}, \tau_{n})=(0,0)$
in case where $B<A<C$ and $D(a_{i}, b_{i}, c_{i}, \gamma )>0$.
Obviously, the uniqueness of limits ensures that
the full sequence $\{(d_{2,n}, \tau_{n})\}$ itself converges to
$(0,0)$. 
Furthermore, one can verify that all $\varGamma_{j}^{\pm}$ 
approach $(d,\tau )=(0,0)$ in a similar manner.

Next we consider the case when $C<A<B$ and $D(a_{i}, b_{i}, c_{i}, \gamma )>0$.
It follows from Remark \ref{rem69} that
$\mathcal{T}$ is not connected and $\tau^{*}\not\in (0,\widetilde{\tau})$.
This fact implies that
the projection of $\varGamma^{+}_{1}$ on the $\tau$ axis
does not intersect with $(0,\widetilde{\tau}\,]$.
Then the situation (I); $\tau_{0}=0$
cannot occur, and then, (II) necessarily occurs.
Consequently, we deduce that 
$\lim_{n\to\infty}(d_{2,n}, \tau_{n})=(0,\tau_{0})$
with some $\tau_{0}\in (\widetilde{\tau}, \overline{\tau})$
satisfying $H(z_{1}(\tau_{0}), \tau_{0})=
H(z_{3}(\tau_{0}), \tau_{0})$.
Hence it follows that $\varGamma_{1}^{+}$ approaches 
$(d, \tau)=(0,\tau_{0})$.
It is possible to check that each
$\varGamma_{j}^{\pm}$ also attains $(d, \tau)=(0, \tau_{0})$.
The proof of Theorem \ref{1dimthm} is complete. 
\end{proof}

%

%
%

\end{document}